\numberwithin{equation}{section}
\newtheorem{thm}[equation]{Theorem}
\newtheorem{prop}[equation]{Proposition}
\newtheorem{lemma}[equation]{Lemma}
\newtheorem{cor}[equation]{Corollary}
\theoremstyle{definition}
\newtheorem{rem}[equation]{Remark}
\newtheorem{example}[equation]{Example}
\newtheorem{dfn}[equation]{Definition}
\newtheorem{notation}[equation]{Notation}
\newcommand{\onto}{\rightarrow\!\!\!\!\!\rightarrow}
\newcommand{\Gal}{\mathop{\mathrm{Gal}}}
\newcommand{\rk}{\mathop{\mathrm{rk}}}
\newcommand{\Ch}{\mathop{\mathrm{Ch}}\nolimits}
\newcommand{\mult}{\operatorname{mult}}
\newcommand{\id}{\mathrm{id}}
\newcommand{\Z}{\mathbb{Z}}
\newcommand{\F}{\mathbb{F}}
\newcommand{\Mor}{\operatorname{Mor}}
\newcommand{\Spec}{\operatorname{Spec}}
\newcommand{\End}{\operatorname{End}}
\newcommand{\Hom}{\operatorname{Hom}}
\newcommand{\Prod}{\operatornamewithlimits{\textstyle\prod}}
\newcommand{\Sum}{\operatornamewithlimits{\textstyle\sum}}
\newcommand{\Oplus}{\operatornamewithlimits{\textstyle\bigoplus}}
\newcommand{\compose}{\circ}
\newcommand{\CM}{\operatorname{CM}}
\newcommand{\AM}{\operatorname{AM}}
\newcommand{\CMe}{\operatorname{CM}_{\mathrm {eff}}}
\newcommand{\CC}{\operatorname{CC}}
\newcommand{\corr}{\rightsquigarrow}
\newcommand{\D}{D}
\renewcommand{\phi}{\varphi}
\newcommand{\Fm}{\mathbf{m}}
\DeclareMathAlphabet{\cat}{OT1}{cmss}{m}{sl}
\title
[A-upper motives]
{A-upper motives
of reductive groups}
\keywords
{
Affine algebraic groups;
projective homogeneous varieties;
Chow rings and motives.
{\em Mathematical Subject Classification (2020):}
20G15; 14C25}
\author
[C. De Clercq]
{Charles De Clercq}
\address
{Universit\'e Sorbonne Paris Nord\\
Intitut Galil\'ee\\
Laboratoire Analyse, G\'eom\'etrie et Applications\\
Villetaneuse, FRANCE}
\email
{declercq@math.univ-paris13.fr}
\urladdr{www.math.univ-paris13.fr/~declercq}
\author
[N. Karpenko]
{Nikita Karpenko}
\address
{Mathematical \& Statistical Sciences \\
University of Alberta \\
Edmonton
\\
CANADA}
\email
{karpenko@ualberta.ca}
\urladdr{www.ualberta.ca/~karpenko}
\author
[A. Qu\'eguiner-Mathieu]
{Anne Qu\'eguiner-Mathieu}
\address
{Universit\'e Sorbonne Paris Nord\\
Intitut Galil\'ee\\
Laboratoire Analyse, G\'eom\'etrie et Applications\\
Villetaneuse, FRANCE}
\email
{queguin@math.univ-paris13.fr}
\urladdr{www.math.univ-paris13.fr/~queguin}
\date
{5 Mar 2025}
\thanks
{This work has been accomplished when the second author was a visitor at the Institut des Hautes Etudes Scientifiques.}
\begin{document}

\begin{abstract}
Given a prime number $p$, we perform the study of Chow motives and motivic decompositions, with coefficients in $\mathbb{Z}/p\mathbb{Z}$, of projective homogeneous varieties for {\em $p'$-inner} $p$-consistent reductive algebraic groups. Assorted with the known case of $p$-inner reductive groups, our results cover all absolutely simple groups of type not $^3\!\cat{D}_4$ or $^6\!\cat{D}_4$, among other examples. First, we define the {\em A-upper motives} of such a reductive group $G$; they are indecomposable motives, naturally related to Artin motives built out of spectra of subextensions of a minimal extension over which $G$ become of inner type. With this in hand, we carry on the qualitative study of motivic decompositions for projective $G$-homogeneous varieties. Providing geometric isomorphism criteria for A-upper motives, we obtain a classification of motives of projective $G$-homogeneous varieties, by means of their {\em higher Artin-Tate traces}. We also show that the {\em higher Tits $p$-indexes} of the group $G$ determine its motivic equivalence class.
\end{abstract}

\maketitle

\tableofcontents

\section{Introduction}
\label{Introduction}

Envisioned by Alexander Grothendieck in the sixties, Chow motives provide powerful invariants to study arithmetic and geometry of smooth projective varieties over fields.
The case of projective homogeneous varieties has received a lot of attention over the years and numerous breakthroughs and solutions to classical conjectures on related algebraic objects were obtained using these new methods.
This story started within quadratic form theory.
An extensive study of motives of projective quadrics, which were essential to Voevodsky's proof of the Milnor conjecture \cite{MR2031199}, was carried out by
Alexander Vishik in \cite{MR2066515}.
It played a crucial role in the second author's proof of Hoffmann's conjecture \cite{MR1992018}, as well as advances on the Kaplansky problem \cite{Vishik-u-invariant}.
A similar study was later initiated for other types of groups and varieties, which also led to many applications, notably on the isotropy of orthogonal involutions \cite{MR3022954}, the classification of motivic decompositions for exceptional groups \cite{shells}, and the classification of algebraic groups~\cite{motequiv} and motives of projective homogeneous varieties using their higher isotropy~\cite{TateTraces}.

Most of these results are in the framework of algebraic groups of inner type, or of $p$-inner type, where motives have coefficients in $\Z/p\Z$. This means the $\ast$-action of the absolute Galois group of the base field on the associated Dynkin diagram is trivial, or becomes trivial under a $p$-power field extension.
In this work, we initiate the study of motives and motivic decompositions for projective homogeneous varieties under arbitrary reductive groups.

One of the main tools used by Vishik to describe the motivic structure of projective quadrics are the motives of \v{C}ech simplicial schemes associated to orthogonal Grassmannians and living in a Voevodsky motivic category.
Working with the Chow motives with coefficients in $\mathbb{Z}/p\mathbb{Z}$, the second author extended some of his results to projective $G$-homogeneous varieties, for $G$ an arbitrary reductive group of $p$-inner type. Indecomposable summands are then obtained using the notion of {\em upper motive}~\cite{upper}, \cite{outer}.
In this article, we stick to Chow motives with coefficients in $\Z/p\Z$ and work in the framework of \emph{$p'$-inner} groups, that is groups which become of inner type over a prime-to-$p$ extension of $F$. All absolutely simple groups of type not $^6\!\cat{D}_4$  are either $p$-inner or $p'$-inner for any given prime number $p$.
In this new setting, we introduce a notion of {\em A-upper motive} (see Definition \ref{AupperG}). These indecomposable motives are naturally associated with the Artin motives given by the spectra of subfields of a minimal field extension over which $G$ becomes of inner type. For groups of inner type,
the notion of A-upper motive coincides with the classical notion of upper motive.
With this in hand, we obtain various results on motivic decomposition and classification of motives of projective homogeneous varieties, as well as classification of algebraic groups.

More specifically, the contents of the article are as follows. Sections 2 and 3 recall some known facts on Chow motives and Artin motives. In \S\,4, we introduce the functor $\Fm$, from the category of effective Chow motives to the category of Artin motives. Based on this, A-upper motives are introduced in \S\,5. Theorem~\ref{Aupper.thm} is an important technical result, which provides conditions under which the A-upper motives of a variety are classified by their image through the functor $\Fm$.
The qualitative study of motivic decompositions of projective homogeneous varieties for $p'$-inner reductive groups is done in \S\,6. Theorem \ref{main} states that up to Tate shifts, indecomposable motivic summands of projective $G$-homogeneous varieties for such a group $G$ are the A-upper motives of $G$. In the remaining part of the paper, we assume in addition that the groups are $p$-consistent, see Definition~\ref{p-consistent}. Under this condition, using the technical tools developed in the previous two sections, we provide some geometric isomorphism criteria for A-upper motives, see~Theorem \ref{crit} and Corollary \ref{critR}. With this in hand, we obtain the complete classification of motives of these projective $G$-homogeneous varieties, through their {\em higher Artin-Tate traces} (Theorem \ref{isomcrit}). This generalizes the main result of \cite{TateTraces}, in an optimal way (see Remark \ref{counterexample}).
Finally, in \S\ref{Motivic equivalence}, we obtain the complete classification of $p'$-inner $p$-consistent groups up to \emph{motivic equivalence}, by means of their higher Tits $p$-indexes. These results expound how higher isotropy of such reductive groups determines motives of the associated projective homogeneous varieties.

\section{Notation}
\label{nota.section}

Let $F$ be a field and $\bar F$ a separable closure of $F$.
We denote by $\Gamma_F$ the absolute Galois group $\Gal(\bar F/F)$ of $F$.
Throughout the paper, $p$ is a prime number, $\F:=\Z/p\Z$, and $\Ch(\cdot)$ denotes the Chow group with coefficients in $\F$.
We also let $\mathrm{CM}(F,\F)$ be the category of Chow motives over $F$ with coefficients in $\F$
(see \cite[\S64]{EKM}), while $\CMe(F,\F)$ stands for its full subcategory of effective motives.
A {\em variety} is a separated scheme of finite type over a field.
For any smooth projective $F$-variety $X$, we denote by $M(X)$ its motive in both categories.
We use the notation $\AM(F,\F)$ for the full subcategory of $\CMe(F,\F)$ which consists of direct summands of motives of $0$-dimensional varieties, that is the category of Artin (Chow) motives, see \S\ref{Artin.section}.

A {\em complete decomposition} of a motive $M$ is a finite direct sum decomposition with indecomposable summands.
We say that the Krull-Schmidt property holds for a motive $M$ if any direct sum decomposition of $M$ can be refined into a complete one, and $M$ admits a unique complete decomposition up to permutation and isomorphism of the summands. Since we work with finite coefficients, this property holds for direct summands of motives of geometrically split varieties satisfying the nilpotence principle, see~\cite[\S 2.I]{upper}. This covers, in particular, the case of projective homogeneous varieties under the action of a reductive algebraic group.

Given an $F$-variety $X$ and a field extension $L/F$, $X_L$ is the $L$-variety given by the product of the $F$-schemes $X$ and $\Spec L$;
we also let $\bar X=X_{\bar F}$.
The functor $X\mapsto X_L$ for smooth projective $X$ extends to motives;
given an $F$-motive $M$, we write $M_L$ for the corresponding $L$-motive.

If $L/F$ is finite and $Y$ is an $L$-variety,
we let $Y^F$ be the $F$-variety given by the scheme $Y$ endowed with the composition $Y\to\Spec L\to\Spec F$.
In practice, we will only consider smooth projective varieties $Y$ and finite separable field extensions $L/F$, in which case the $F$-variety $Y^F$ is also smooth and projective.
Under these assumptions, by \cite[Theorem 2.1]{outer}, the Krull-Schmidt property holds for the motive of $Y^F$.
In particular, taking $Y=\Spec L$, one sees that the Artin motives have the Krull-Schmidt property.
By \cite[\S3]{outer},
the functor $Y\mapsto Y^F$ extends to motives,
the resulting functor $\CM(L,\F)\to\CM(F,\F)$ is called the {\em corestriction} functor;
given an $L$-motive $M$, we write $M^F$ for the corresponding $F$-motive.

By default, the spectrum of a field is the variety over this very field; for a finite field extension $L/F$, we use the notation $(\Spec L)^F$ for the $F$-variety given by the spectrum of $L$.
We write $M(L)^F$ for the motive of $(\Spec L)^F$, and $\F=M(F)=M(\Spec F)$ for the Tate motive. A motive $M$ is called geometrically split if the $\bar F$-motive $\bar M$ is isomorphic to a sum of shifts of $\F=M(\bar F)$; the number of summands is called the rank of $M$.

Let $E/F$ be a Galois field extension with Galois group denoted by $\Gamma$.
Given an $E$-variety $Y$ and an automorphism $\gamma\in \Gamma$, we write $Y_{\gamma}$ for the $E$-variety obtained from $Y$ by the base change via $\gamma$.
Thus $Y_\gamma$ is the scheme $Y$ viewed as an $E$-variety via the composition
\[
Y\to\Spec E\stackrel{\gamma^{-1}}{\longrightarrow}\Spec E,
\]
for which we also write $Y^{\gamma^{-1}}=Y_\gamma$.
This base change is invertible: $(Y_\gamma)_{\gamma^{-1}}=Y$. Hence the variety $Y_\gamma$ has a rational point if and only if $Y$ has one.
We use the same notation for motives, and for the same reason, a motive $M_\gamma$ is indecomposable if and only if $M$ is.

Let $X$ and $X'$ be smooth connected projective $F$-varieties. As in~\cite[\S 2]{motequiv}, we say that $X$ dominates $X'$ if there exists a correspondence $X\corr X'$ of multiplicity $1$. We call $X$ and $X'$ equivalent if they dominate each other, see~\cite[\S 2]{TateTraces}. Note that both relations depend on the choice of the prime number $p$; we may sometimes emphasize this by adding the expression (mod $p$). We will use the following observation on the behavior of the equivalence relation:
\begin{rem}
\label{cores.rem}
Let $L/F$ be a finite separable field extension and consider smooth connected projective varieties $X/F$ and $Y/L$.
If the degree of $L/F$ is prime to $p$, then $X_L$ and $Y$ are equivalent if and only if $X$ and $Y^F$ also are.
Indeed, as schemes, we have $X_L\times Y=X\times Y^F$, therefore a given element in $\Ch_{\dim X}(X_L\times Y)$ can be viewed as a correspondence $\alpha: X_L\corr Y$ as well as $\alpha':X\corr Y^F$. The multiplicity is given in each case by the push-forward of the first projection, so we have $\mult(\alpha')=[L:F]\mult(\alpha)$. Similarly, viewing
an element in $\Ch_{\dim Y}(Y\times X_L)$ as correspondences $Y\corr X_L$ and $Y^F\corr X$, we get that they have the same multiplicity.
\end{rem}
We will also use the following, where $R_{L/F}$ denotes the Weil restriction functor:
\begin{lemma}
\label{R}
Let $L/F$ be a finite separable field extension, and $Y/L$ a smooth connected projective variety. We assume there exists an $F$-variety $\hat Y$ such that $\hat Y_L$ and $Y$ are equivalent. If the degree of $L/F$ is prime to $p$, then
the $F$-varieties $Y^F$ and $R_{L/F}(Y)$ are equivalent.
\end{lemma}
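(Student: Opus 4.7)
The plan is to split the equivalence into two multiplicity-one correspondences, bridged by the variety $\hat Y$. By Remark~\ref{cores.rem}, the hypothesis $\hat Y_L \sim Y$ over $L$ already gives $\hat Y \sim Y^F$ over $F$, so it suffices to establish $\hat Y \sim R_{L/F}(Y)$ over $F$. I will use the unit $u\colon \hat Y \to R_{L/F}(\hat Y_L)$ and the counit $c\colon R_{L/F}(Y)_L \to Y$ of the adjunction $(-)_L \dashv R_{L/F}$, both honest morphisms whose graphs are multiplicity-one correspondences.

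For the direction \emph{$R_{L/F}(Y)$ dominates $\hat Y$}, the graph of $c$ is a multiplicity-one $L$-correspondence $R_{L/F}(Y)_L \corr Y$; composing with the multiplicity-one $L$-correspondence $Y \corr \hat Y_L$ given by the hypothesis yields a multiplicity-one $L$-correspondence $R_{L/F}(Y)_L \corr \hat Y_L$. Remark~\ref{cores.rem} converts this into an $F$-correspondence $R_{L/F}(Y) \corr (\hat Y_L)^F$ of multiplicity $[L:F]$, i.e.\ one modulo $p$. Finally, the graph of the first projection $\hat Y_L = \hat Y \times_F L \to \hat Y$ gives a multiplicity-one $F$-correspondence $(\hat Y_L)^F \corr \hat Y$, so composition closes this direction.

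For the reverse direction \emph{$\hat Y$ dominates $R_{L/F}(Y)$}, the graph of $u$ is a multiplicity-one $F$-correspondence $\hat Y \corr R_{L/F}(\hat Y_L)$, and it remains to exhibit a multiplicity-one $F$-correspondence $R_{L/F}(\hat Y_L) \corr R_{L/F}(Y)$. I would choose an irreducible closed subvariety $Z \subset \hat Y_L \times_L Y$ of dimension $\dim \hat Y$ whose projection to $\hat Y_L$ is dominant of some degree $d$ prime to $p$ --- such a $Z$ must appear as a component of any multiplicity-one representative of $\hat Y_L \corr Y$, since otherwise every dominant component would have degree divisible by $p$, contradicting multiplicity one modulo $p$. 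Then $R_{L/F}(Z)$ is a closed subscheme of $R_{L/F}(\hat Y_L \times_L Y) = R_{L/F}(\hat Y_L) \times_F R_{L/F}(Y)$ of dimension $n \dim \hat Y = \dim R_{L/F}(\hat Y_L)$, with $n = [L:F]$. Base-changing to the Galois closure $E$ of $L/F$, the standard identification $R_{L/F}(W)_E = \prod_\sigma W_\sigma$ over embeddings $\sigma\colon L \hookrightarrow E$ turns the first projection $R_{L/F}(Z) \to R_{L/F}(\hat Y_L)$ into $\prod_\sigma(Z_\sigma \to \hat Y_E)$, generically finite of degree $d^n$. Thus $[R_{L/F}(Z)]$ is a multiplicity-$d^n$ $F$-correspondence $R_{L/F}(\hat Y_L) \corr R_{L/F}(Y)$; dividing by the unit $d^n$ modulo $p$ produces the required multiplicity-one correspondence.

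The main obstacle is this last step: interpreting the Weil restriction of a single irreducible cycle as an honest correspondence and computing its multiplicity via base change to $E$. Once one has the dimension count, the functorial description of $R_{L/F}$ applied to closed subschemes, and the injectivity of restriction on top-dimensional fundamental classes (so that the degree computed after extension to $E$ descends to the degree over $F$), the rest of the proof is a routine composition of multiplicity-one correspondences.
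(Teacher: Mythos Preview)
Your proof is correct and follows essentially the same route as the paper's. Both arguments pivot on the adjunction $(-)_L \dashv R_{L/F}$: the counit $R_{L/F}(Y)_L \to Y$ supplies one domination, and the unit $\hat Y \to R_{L/F}(\hat Y_L)$ supplies the other once one knows that a multiplicity-one $L$-correspondence $\hat Y_L \corr Y$ transfers to a multiplicity-one $F$-correspondence $R_{L/F}(\hat Y_L) \corr R_{L/F}(Y)$. The paper reduces to comparing $Y$ and $R_{L/F}(Y)_L$ over $L$, while you reduce to comparing $\hat Y$ and $R_{L/F}(Y)$ over $F$; via Remark~\ref{cores.rem} these reductions are interchangeable.

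The only substantive difference is in how the Weil-transfer step is handled. The paper invokes \cite[\S3]{MR1809664} as a black box, whereas you reconstruct the needed special case by applying $R_{L/F}$ to a single prime cycle $Z$ and reading off the multiplicity $d^{n}$ after base change to the Galois closure. Your computation is right, but note one point you left implicit: for $[R_{L/F}(Z)]_E$ to agree with the external product $\prod_\sigma [Z_\sigma]$ (so that the degree really is $d^{n}$ with no spurious $p$-power factor from nilpotents), you need $Z$ to be geometrically reduced over $L$. This does hold here: since the projection $Z\to\hat Y_L$ has degree $d$ prime to $p$, the function-field extension $L(Z)/L(\hat Y_L)$ is separable, and $\hat Y_L$ is smooth over $L$, so $L(Z)/L$ is separable and $Z$ is geometrically reduced. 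With that check, your hand-rolled transfer is exactly the relevant instance of the cited construction.
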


\begin{proof}
By Remark~\ref{cores.rem}, it is enough to prove that the $L$-varieties $Y$ and $R_{L/F}(Y)_L$ are equivalent.
Since the functor $R_{L/F}$ is right adjoint to the base change functor (see \cite[(4.2.2)]{MR1321819}),
we have
$$
\Mor(R_{L/F}(Y)_L,Y)=\Mor(R_{L/F}(Y),R_{L/F}(Y))\ni\id
$$
showing that there is a morphism $R_{L/F}(Y)_L\to Y$ and, in particular, $R_{L/F}(Y)_L$ dominates $Y$.

To obtain the other domination, we use the variety $\hat{Y}$.
Applying the Weil transfer of \cite[\S3]{MR1809664} to multiplicity one correspondences between $\hat{Y}_L$ and $Y$,
we get multiplicity one correspondences between the varieties $R_{L/F}(\hat{Y}_L)$ and $R_{L/F}(Y)$ witnessing their equivalence.
Since
$$
\Mor(\hat{Y},R_{L/F}(\hat{Y}_L))=\Mor(\hat{Y}_L,\hat{Y}_L)\ni\id,
$$
there is a morphism $\hat{Y}\to R_{L/F}(\hat{Y}_L)$.
It follows that $\hat{Y}$ dominates $R_{L/F}(Y)$ and therefore
$\hat Y_L$, which is equivalent to $Y$, dominates $R_{L/F}(Y)_L$.
\end{proof}

\section{Artin motives}
\label{Artin.section}

A-upper motives, defined in \S\ref{Aupper.section}, are an essential tool in this paper.
The letter ``A'' in their name indicates their relationship with the {\em Artin motives}.
In this section, we recall known facts about Artin motives, and we provide an explicit example showing that the classification of Chow motives using their higher Tate trace provided by~\cite[Thm 4.3]{TateTraces} is not valid anymore if we relax the assumptions, see Remark~\ref{counterexample}.

By definition (cf.\! \cite[Definitions 1.2, 1.3]{MR3642472}), an Artin motive over $F$ is a direct summand in the Chow motive of the spectrum of an {\em \'etale $F$-algebra}
(that is up to isomorphism, a finite direct product of finite separable field extensions of $F$).
An Artin-Tate motive is a Tate shift of an Artin motive.\footnote{Thanks to Stefan Gille
and Alexander Vishik
for suggestion to consider the Artin and Artin-Tate motives in this context.}
In particular, the Tate motive $\F$, as well as $M(L)^F$ for all finite separable field extensions $L/F$, are Artin motives.
Artin motives form an additive subcategory of $\CMe(F,\F)$, denoted by $\AM(F,\F)$.
Here is a simple example of an indecomposable Artin motive that is not isomorphic to $\F$:

\begin{example}[{see Example \ref{ex3.4} for more details}]
\label{start}
Consider an odd prime number $p$, a field $F$, and a separable quadratic field extension $L/F$.
In $\CMe(F,\F)$ (as well as in $\CM(F,\F)$), the complete decomposition of the motive $M(L)^F$
consists of two summands:
$\F$ and $A$, where the motive $A$ (whose isomorphism class is uniquely determined by the
Krull-Schmidt property) satisfies
$$
\Hom(\F,A)=0=\Hom(A,\F).
$$
In particular, $A$ is not isomorphic to $\F$.
\end{example}

Artin motives may be described in terms of Galois permutation modules, as we now proceed to recall.
Note that for any $F$-variety $X$, the Chow group $\Ch^0(\bar{X})$ is a continuous $\Gamma_F$-module.
To better understand the structure of this module when $X$ is an Artin motive,
one can use the anti-equivalence of categories between \'etale $F$-algebras
and finite sets with a continuous left $\Gamma_F$-action, see \cite[(18.4)]{MR1632779}.
The $\Gamma_F$-set corresponding to an \'etale $F$-algebra $L$ is the set of $F$-algebra homomorphisms from $L$ to $\bar F$.
Its cardinality is equal to the dimension of $L$ over $F$.
The direct product (respectively, tensor product) of \'etale $F$-algebras corresponds to the disjoint union (respectively, direct product) of $\Gamma_F$-sets.
Note that $L$ is a field if and only if the corresponding $\Gamma_F$-set is transitive.

Let $L\subset \bar F$ be a finite separable field extension of $F$ embedded into $\bar{F}$, and let $\Gamma_L=\Gal(\bar F/L)$.
The set of $F$-algebra homomorphisms from $L$ to $\bar F$ is identified with the set of left cosets $\Gamma_F/\Gamma_L$, on which $\Gamma_F$ acts by left multiplication.
For the $F$-variety $X:=(\Spec L)^F$, consider the $\bar{F}$-variety $\bar X=\Spec(L\otimes_F\bar F)$.
Using the identification of $\bar F$-algebras
\begin{equation}
\label{gal.eq}
L\otimes_F\bar F = \Prod_{\Gamma_F/\Gamma_L} \bar F,\ \ x\otimes \lambda\mapsto (\gamma(x)\lambda)_{\gamma\Gamma_L\in \Gamma_F/\Gamma_L},
\end{equation}
we see that $\bar X$  is a disjoint union of base points identified with $\Gamma_F/\Gamma_L$.
To the Artin motive $M(L)^F$, we associate the Chow group $\Ch^0(\bar X)$, which is a
transitive permutation $\F[\Gamma_F]$-module isomorphic to the $\F[\Gamma_F]$-module
$\F[\Gamma_F/\Gamma_L]$ given by the $\Gamma_F$-set $\Gamma_F/\Gamma_L$.
(The isomorphim depends on the choice of the embedding $L\hookrightarrow\bar{F}$.)
By {\em permutation module} over a group ring $\F[\Gamma]$ we mean a module possessing a finite base over $\F$ permuted by
$\Gamma$; in particular, all our permutation modules are finite dimensional vector spaces over $\F$.

By the same arguments as in \cite[\S7]{MR2264459}, we obtain an anti-equivalence of additive categories between the category
of Artin motives $\AM(F,\F)$ and the category of direct summands in continuous permutation $\F[\Gamma_F]$-modules.
This anti-equivalence is compatible with the tensor products in these two categories.
Indeed, the tensor product of two \'etale $F$-algebras corresponds to the direct product of the associated $\Gamma_F$-sets, which in turn gives rise to the tensor product of the corresponding permutation modules.

\begin{rem}
Restricting the duality functor $\CM(F,\F)\to\CM(F,\F)^{\mathrm{op}}$ of \cite[\S65]{EKM} to the subcategory $\AM(F,\F)\subset\CM(F,\F)$, we get a functor $\AM(F,\F)\to\AM(F,\F)^{\mathrm{op}}$ which is identity on the motives of varieties.
Composing it with the above anti-equivalence, one gets the equivalence
of additive categories between the category
of Artin motives $\AM(F,\F)$ and the category of direct summands of continuous permutation $\F[\Gamma_F]$-modules,
obtained in \cite[\S7]{MR2264459} directly using the Chow functor $\Ch_0$ in place of $\Ch^0$.
\end{rem}

By construction, the motive $M(L)^F$ corresponds to the permutation module $\F[\Gamma_F/\Gamma_L]$; in particular,
the Tate motive $\F=M(F)$ corresponds to $\F[\Gamma_F/\Gamma_F]$, that is
the $1$-dimensional module $\F$ with trivial $\Gamma_F$-action.

Let $E\subset \bar F$ be a finite Galois field extension of $F$ containing $L$,
and let $\Gamma$ be its Galois group $\Gal(E/F)$.
The action of $\Gamma_F$ on $\F[\Gamma_F/\Gamma_L]$ factors through $\Gamma$.
We may consider $\F[\Gamma_F/\Gamma_L]$ as an $\F[\Gamma]$-module rather than an $\F[\Gamma_F]$-module
(without affecting, say, its endomorphism ring).
This applies notably when $L/F$ itself is Galois and $E=L$.

\begin{example}
\label{ex3.4}
In the settings of Example~\ref{start}, we have $\Gamma_F/\Gamma_L=\Gamma=\{1,\sigma\}\simeq {\mathbb Z}/2{\mathbb Z}$.
The $\F[\Gamma]$-module $\F[\Gamma]$, associated to $M(L)^F$, decomposes as
$$
\F[\Gamma]= \F\cdot(1+\sigma)\oplus\F\cdot(1-\sigma).
$$
The action of $\Gamma$ is trivial on the first summand, and nontrivial on the second one.
So $\F\cdot(1+\sigma)$ corresponds to the Tate summand $\F$
in $M(L)^F$
whereas $\F\cdot(1-\sigma)$ corresponds to $A$.
\end{example}

\begin{example}
\label{Artin}
The previous example can be extended as follows.
Let $p$ be an arbitrary prime number.
Consider a finite Galois field extension $L/F$ of some degree $n$ prime to $p$.
The $\F[\Gamma]$-module $\F[\Gamma]$ contains a submodule of dimension $1$ over $\F$ with trivial $\Gamma$-action, namely, $\F\cdot(\Sum_{\gamma\in\Gamma}\gamma)$.
Since $n$ is invertible in $\F$, this submodule splits off as a direct summand, where the complementary summand is
given by the submodule $B$ of linear combinations
$\Sum_{\gamma\in\Gamma}\lambda_\gamma \cdot\gamma$
satisfying $\Sum_{\gamma\in\Gamma}\lambda_\gamma=0$.
As a result, $M(L)^F$ contains an indecomposable direct summand isomorphic to the Tate motive $M(F)=\F$. We get a direct sum decomposition $M(L)^F=\F\oplus A$, where the Artin motive $A$ corresponds to the $\F[\Gamma]$-module $B$.

(i)  Assume $p=2$ and $n=3$, so that $L/F$ is a cubic field extension. The $\F[\Gamma]$-module $B$ has no proper stable submodule in this case, so that $M(L)^F=\F\oplus A$ with $A$ indecomposable. Over $L$, the motive $A$ is isomorphic to $\F\oplus\F$.

(ii) Assume now $p=7$ and $n=3$. Pick a generator $\sigma$ of $\Gamma$. The module $B$ admits a basis given by the elements $v_1:=1+2\sigma-3\sigma^2$ and $v_2:=1-3\sigma+2\sigma^2$, which satisfy $\sigma v_1=-3v_1$ and $\sigma v_2=2v_2$. Therefore, $B=B_1\oplus B_2$ with $B_i:=F\cdot v_i$,
and $A=A_1\oplus A_2$ with $A_i$ corresponding to $B_i$.
The motives $A_1$ and $A_2$ are indecomposable Artin motives, non-isomorphic to $\F$ over $F$ and becoming isomorphic to $\F$ over $L$.
Moreover, since $\sigma$ acts on $\F v_1$ and $\F v_2$ by multiplication by two different scalars, the modules $B_1$ and $B_2$ are not isomorphic, so that the motives $A_1$ and $A_2$ are not isomorphic.
The action of $\Gamma$ on the tensor products
$B_1^{\otimes3}$, $B_2^{\otimes3}$, and $B_1\otimes B_2$ is trivial.
Therefore each of the motives $A_1^{\otimes3}$, $A_2^{\otimes3}$, and $A_1\otimes A_2$
is isomorphic to $\F$, i.e., the motives
$A_1$ and $A_2$ are invertible, and their classes in the Picard group of isomorphism classes of invertible motives in
$\CM(F,\F)$
(with multiplication induced by tensor product, \cite[Definition A.2.7]{MR1388895}) are inverse to each other elements of order $3$.
\end{example}

\begin{rem}
\label{counterexample}
The motives $A_1$ and $A_2$ defined in Example~\ref{Artin}(ii) have the same {\em Tate trace}
(see~\cite[Definition 3.5]{TateTraces})
over any field extension of $F$, even though they are not isomorphic over $F$.
Indeed, none of the indecomposable motives $A_1$ and $A_2$ is isomorphic to $\F$, hence each of them has trivial Tate trace.
This remains true over any field extension $K$ of $F$ such that the tensor product $L\otimes_F K$ is a field.
On the contrary, if $L\otimes_F K$ is not a field, it is the split \'etale $K$-algebra $K\times K\times K$.
Hence the motive $M(L)^F$ becomes isomorphic to a direct sum of three copies of $\F$ over such $K$, so that both $A_1$ and $A_2$ become isomorphic to $\F$
over $K$.
This example demonstrates limitations for possible generalizations of Theorem 4.3 of~\cite{TateTraces},
and is a strong motivation for introducing the {\em Artin-Tate traces}, see Definition~\ref{ATTrace.dfn}.
\end{rem}

We finish this section with two results which aim at describing how Artin motives behave under base field extension to the function field of a geometrically integral variety. They are extensively used in the remaining part of the paper.

\begin{lemma}
\label{FX}
Let $X$ be a geometrically integral variety over a field $F$ and let $E/F$ be a finite Galois field extension.
Then $E(X)/F(X)$ is also a finite Galois field extension and its Galois group $\tilde{\Gamma}$ is isomorphic to
$\Gamma:=\Gal(E/F)$.
\end{lemma}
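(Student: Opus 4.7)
The plan is to identify $E(X)$ explicitly with the tensor product $F(X) \otimes_F E$, which will make the Galois theory transparent. Since $X$ is geometrically integral, the base change $X_E$ remains integral, and so it has a well-defined function field; by convention this is $E(X)$.

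To carry out the identification, I would pass to an affine open $\Spec A \subseteq X$. Then $\Spec(A \otimes_F E)$ is an affine open of $X_E$, and because $X_E$ is integral, $A \otimes_F E$ is a domain whose fraction field is $E(X)$. On the other hand, $F(X) \otimes_F E$ is the localization of $A \otimes_F E$ at the multiplicative set $A \setminus \{0\}$, so it embeds naturally into $E(X)$. Because $E/F$ is finite, $F(X) \otimes_F E$ is a finite-dimensional $F(X)$-algebra which is also a domain, hence already a field. This forces every nonzero element of $A \otimes_F E$ to be invertible inside $F(X) \otimes_F E$, so in fact $E(X) = F(X) \otimes_F E$. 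In particular, $[E(X) : F(X)] = [E : F]$, which gives finiteness and the correct degree.

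For the Galois structure, the action of $\Gamma$ on the second factor of $F(X) \otimes_F E$ yields a homomorphism $\Gamma \to \Aut(E(X)/F(X))$. It is injective because its restriction to the subfield $E \subseteq E(X)$ recovers the faithful action $\Gamma \to \Aut(E/F)$. Combining injectivity with the general bound $|\Aut(E(X)/F(X))| \le [E(X):F(X)] = |\Gamma|$ shows that the map is an isomorphism and that the number of $F(X)$-automorphisms of $E(X)$ equals its degree, so $E(X)/F(X)$ is Galois with $\tilde\Gamma \cong \Gamma$ canonically.

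The only delicate point is the assertion that $F(X) \otimes_F E$ is a field: this is precisely where geometric integrality of $X$ enters (to ensure the tensor product is a domain), combined with the standard fact that a finite-dimensional domain over a field is automatically a field. Everything else is formal bookkeeping.
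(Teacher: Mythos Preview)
Your argument is correct, but it takes a different route from the paper's. You identify $E(X)$ explicitly as $F(X)\otimes_F E$ and then use the $\Gamma$-action on the second tensor factor together with the cardinality bound $|\Aut(E(X)/F(X))|\le[E(X):F(X)]$ to conclude simultaneously that the extension is Galois and that $\Gamma\to\tilde\Gamma$ is an isomorphism. The paper instead argues in the opposite direction: it asserts that $E(X)/F(X)$ is algebraic, normal, and separable (hence Galois), and then studies the \emph{restriction} map $\tilde\Gamma\to\Gamma$, $\sigma\mapsto\sigma|_E$, showing it is well-defined because $E$ is algebraically closed in $E(X)$, injective because $E$ and $F(X)$ generate $E(X)$, and surjective because $E\cap F(X)=F$. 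Your approach has the advantage of making the identification $E(X)=F(X)\otimes_F E$ and the degree equality explicit, and of deducing the Galois property rather than asserting it; the paper's approach is slightly more economical in that it avoids the affine-open and localization bookkeeping and works purely with field-theoretic generators and intersections.
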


\begin{proof}
The extension $E(X)/F(X)$ is algebraic, normal, and separable;
therefore it is Galois.
Since $E$ is algebraically closed in $E(X)$, any element of
$\tilde{\Gamma}$
maps $E$ to $E$.
Together, the subfields $E$ and $F(X)$ generate the field $E(X)$; it follows that
the group homomorphism
$\tilde{\Gamma}\to\Gamma$, $\sigma\mapsto \sigma|_E$
is injective. It also is surjective; indeed, any element of $E$ which is fixed under the image of $\tilde{\Gamma}$ belongs to $E\cap F(X)=F$.
\end{proof}
Combining this Lemma with the anti-equivalence of categories between Artin motives and
direct summands of permutation modules, we get the following:
\begin{cor}
\label{AFX}

Let $X$ be a geometrically integral $F$-variety.
Let $L/F$ be a subextension of a finite Galois field extension $E/F$.
For any direct summand $\tilde{A}$ of the motive $M(L(X))^{F(X)}$, there is a direct summand
$A$ of $M(L)^F$ satisfying $A_{F(X)}\simeq\tilde{A}$.
The motive $\tilde{A}$ is indecomposable if and only if $A$ is.
Direct summands $A$ and $A'$ of $M(L)^F$ with isomorphic $A_{F(X)}$ and $A'_{F(X)}$ are isomorphic.
\qed
\end{cor}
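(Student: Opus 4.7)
The plan is to deduce the corollary from the anti-equivalence of additive categories between $\AM(F,\F)$ and direct summands of continuous permutation $\F[\Gamma_F]$-modules, together with Lemma~\ref{FX}. Under this translation, the assertion reduces to showing that base change identifies the endomorphism ring of $M(L)^F$ with that of $M(L(X))^{F(X)}$, after which existence, indecomposability, and isomorphism claims will follow from standard idempotent bookkeeping in a Krull--Schmidt additive category.

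First I would unpack which permutation modules are involved. Setting $H := \Gal(E/L)$, the motive $M(L)^F$ corresponds to the $\F[\Gamma]$-module $\F[\Gamma/H]$, where $\Gamma := \Gal(E/F)$. By Lemma~\ref{FX}, restriction to $E$ yields an isomorphism $\tilde{\Gamma} := \Gal(E(X)/F(X)) \xrightarrow{\sim} \Gamma$; moreover this isomorphism maps $\Gal(E(X)/L(X))$ onto $H$, since an element of $\tilde{\Gamma}$ fixes $L(X)$ if and only if its restriction to $E$ fixes $L$. Hence $M(L(X))^{F(X)}$ corresponds to the $\F[\tilde{\Gamma}]$-module $\F[\tilde{\Gamma}/\Gal(E(X)/L(X))]$, which the identification carries isomorphically onto $\F[\Gamma/H]$.

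Next I would verify that the base change functor $\AM(F,\F)\to\AM(F(X),\F)$ corresponds, under the anti-equivalence, to restriction of scalars along the natural map $\Gamma_{F(X)}\to\Gamma_F$ coming from a compatible choice of separable closures $\bar F\subset\overline{F(X)}$. On the module associated with $M(L)^F$, the action of $\Gamma_F$ factors through $\Gamma_F\twoheadrightarrow\Gamma$, and after restriction the action of $\Gamma_{F(X)}$ factors as $\Gamma_{F(X)}\twoheadrightarrow\tilde{\Gamma}\xrightarrow{\sim}\Gamma$. Thus restriction leaves the module unchanged up to the identifications already made, and in particular induces a ring isomorphism
\[
\End(M(L)^F)\xrightarrow{\sim}\End(M(L(X))^{F(X)}).
\]

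With this in hand, the corollary follows at once: direct summands of a motive in a Krull--Schmidt additive category are in bijection, up to isomorphism, with idempotents of the endomorphism ring modulo conjugation, so the isomorphism above yields a bijection between isomorphism classes of direct summands of $M(L)^F$ and of $M(L(X))^{F(X)}$, realized by base change. This simultaneously gives the existence of $A$ lifting $\tilde{A}$, preserves indecomposability (primitivity of the corresponding idempotent), and reflects isomorphism among summands after base change. The main obstacle is the careful check that the anti-equivalence of \S\ref{Artin.section} is compatible with base change in the stated sense, i.e.\ that passing from $F$ to $F(X)$ on motives corresponds exactly to Galois restriction on permutation modules via the natural map $\Gamma_{F(X)}\to\Gamma_F$; once this compatibility is recorded, the rest is formal.
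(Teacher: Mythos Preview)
Your proposal is correct and follows exactly the approach the paper intends: the paper gives no detailed proof, stating only that the corollary follows by ``combining this Lemma [Lemma~\ref{FX}] with the anti-equivalence of categories between Artin motives and direct summands of permutation modules,'' and your argument is precisely the spelled-out version of this. Your identification of the endomorphism rings via the isomorphism $\tilde\Gamma\simeq\Gamma$ of Lemma~\ref{FX}, together with the compatibility of base change with Galois restriction on the module side, is the intended content; the only thing left implicit in both is the routine verification of that compatibility.
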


\section{A retraction}
\label{Fm.section}
We now construct a functor $\Fm$ from the category $\CMe(F,\F)$ of effective Chow motives to its subcategory $\AM(F,\F)$ of Artin motives, which is a crucial ingredient in the definition of A-upper motives.

The category $\CMe(F,\F)$ is the idempotent completion of the category $\CC(F,\F)$ of degree $0$
Chow correspondences.
We first define a functor on $\CC(F,\F)$.

By definition, the objects of $\CC(F,\F)$ are given by smooth projective varieties over $F$;
we write $M(X)$ for the object given by such a variety $X$.
The morphisms from $M(X)$ to $M(X')$ are the degree $0$ correspondences $X\corr X'$ from $X$ to $X'$
with coefficients in $\F$,
where the degree of a correspondence is defined as in \cite[\S63]{EKM}.
(Note a difference with the definition of degree used in \cite{MR0258836}.)

Any smooth connected $F$-variety $X$ determines a finite separable field extension $L/F$ and a smooth geometrically connected $L$-variety $Y$ with $Y^F=X$.
The underlying scheme of the variety $Y$ is just the scheme of $X$.
The field $L$ coincides with the algebraic closure of $F$ inside the function field $F(X)$ of $X$ and is called the {\em field of constants} of $X$.
Let us choose an embedding $L\hookrightarrow\bar{F}$.
Since
\[\bar{X}=Y\times_{\Spec L}\Spec{(L\otimes_F \bar F)},\] the isomorphism~\eqref{gal.eq} provides an identification
\begin{equation}
\label{Xbar}
\bar X=\coprod_{\gamma\Gamma_L\in\Gamma_F/\Gamma_L} \bar{Y}_{\gamma},
\end{equation}
where $\bar Y_\gamma$ is $\bar{Y}$ modified by $\gamma\in\Gamma_F$, see the notation introduced in~\S\ref{nota.section}.
Note that since $Y$ is defined over $L$, we have $\bar Y_{\sigma}=\bar Y$ for all $\sigma\in\Gamma_L$ so that
$\bar{Y}_{\gamma}$ only depends on the coset $\gamma\Gamma_L$.
Therefore, the $\Gamma_F$-set of connected components of $\bar X$ is
identified with the $\Gamma_F$-set $\Gamma_F/\Gamma_L$ (the identification depends on the choice of the embedding
$L\hookrightarrow\bar{F}$).
It follows that $\Ch^0(\bar X)$ is a transitive permutation $\F[\Gamma_F]$-module isomorphic to $\F[\Gamma_F/\Gamma_L]$.

Dropping the assumption that $X$ is connected, we see that
$\Ch^0(\bar X)$ is the permutation $\F[\Gamma_F]$-module
$\Ch^0(\bar{X}_1)\oplus\dots\oplus\Ch^0(\bar{X}_n)$, where $X_1,\dots,X_n$ are the connected components of $X$.

\begin{lemma}
The additive contravariant functor from the category of correspondences $\CC(F,\F)$
to the category of abelian groups, which maps to $\Ch^0(\bar X)$
the motive $M(X)$ of a smooth projective $F$-variety $X$, yields an additive contravariant functor
from $\CC(F,\F)$ to the category of continuous permutation $\F[\Gamma_F]$-modules.
\end{lemma}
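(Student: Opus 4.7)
The plan is to verify three points that remain after the preceding discussion: first, the functoriality of $X\mapsto\Ch^0(\bar X)$ on degree $0$ correspondences; second, that the target consists of continuous permutation $\F[\Gamma_F]$-modules; and third, that the induced maps are $\Gamma_F$-equivariant.

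For the first point, I would define the effect on a degree $0$ correspondence $\alpha\colon M(X)\to M(X')$ as the contravariant pullback
\[
\alpha^*\colon\Ch^0(\bar X')\to\Ch^0(\bar X),\qquad y\mapsto (\pr_1)_*\bigl(\alpha_{\bar F}\cdot \pr_2^*(y)\bigr),
\]
where $\pr_1,\pr_2$ are the projections of $\bar X\times\bar X'$; one reduces to the equidimensional case by working connected-component by connected-component. Additivity is immediate from the linearity of pullback, pushforward and intersection, and compatibility with composition of correspondences follows from the projection formula for Chow groups, cf.~\cite[\S63]{EKM}. This gives an additive contravariant functor from $\CC(F,\F)$ to abelian groups.

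The object part has essentially been handled: the preceding paragraphs identify $\Ch^0(\bar X)$ with the free $\F$-module on the set of connected components of $\bar X$, which by~\eqref{Xbar} is a finite $\Gamma_F$-set isomorphic to a disjoint union of transitive sets $\Gamma_F/\Gamma_{L_i}$, where the $L_i$ are the fields of constants of the connected components of $X$. Each $\Gamma_{L_i}$ is open in $\Gamma_F$, so the action is continuous and $\Ch^0(\bar X)$ is a continuous permutation $\F[\Gamma_F]$-module, as required.

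The substantive step is the $\Gamma_F$-equivariance of $\alpha^*$, which I expect to be the main obstacle in the sense of being the only assertion not formally subsumed by what precedes it. Since $\alpha$ is defined over $F$, its base change $\alpha_{\bar F}$ is $\Gamma_F$-invariant inside $\Ch^*(\bar X\times\bar X')$, and the projections $\pr_1$ and $\pr_2$ are $\Gamma_F$-equivariant as morphisms of $\bar F$-varieties coming from $F$. Because flat pullback, proper pushforward along $\Gamma_F$-equivariant morphisms, and intersection product all commute with the $\Gamma_F$-action on Chow groups of $\bar F$-varieties defined over $F$, the displayed formula for $\alpha^*$ produces a $\Gamma_F$-equivariant homomorphism. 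Combined with the two previous steps, this factors the functor through the category of continuous permutation $\F[\Gamma_F]$-modules, as claimed.
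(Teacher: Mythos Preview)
Your proof is correct and follows essentially the same approach as the paper: you write out the action of a correspondence as $(\pr_1)_*\bigl(\alpha_{\bar F}\cdot\pr_2^*(-)\bigr)$ and observe that because $\alpha$ and the two projections are defined over $F$, each constituent operation commutes with the $\Gamma_F$-action, so the composite is $\Gamma_F$-equivariant. The paper's proof is exactly this, only more terse---it takes the functoriality to abelian groups as given (since the lemma statement already asserts it) and records just the equivariance step.
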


\begin{proof}
We already noticed that $\Ch^0(\bar X)$ is a continuous permutation $\F[\Gamma_F]$-module.
Consider a degree $0$ correspondence $\alpha:X\corr Y$ for smooth projective $F$-varieties $X$ and $Y$.
The induced homomorphism of abelian groups $\Ch^0(\bar Y)\mapsto \Ch^0(\bar X)$
is the composition
\begin{equation*}
\Ch^0(\bar Y)\stackrel{\bar p_2^*}{\longrightarrow}\Ch^0(\bar X\times \bar Y)\stackrel{\cdot \bar \alpha}{\longrightarrow}\Ch^d(\bar X\times \bar Y)\stackrel{\bar p_{1*}}{\longrightarrow}\Ch^0(\bar X),
\end{equation*}
where $d$ is the dimension of $Y$ and $p_1$ and $p_2$ are the projections from $X\times Y$ to $X$ and $Y$.
Since $p_1$, $p_2$, and $\alpha$ are defined over $F$, this composition commutes with the action of $\Gamma_F$,
hence respects the structure of $\F[\Gamma_F]$-module.
\end{proof}

Taking the idempotent completion of both categories, and combining with the anti-equivalence of categories between direct summands of
continuous permutation modules and Artin motives, described in~\S\ref{Artin.section}, we get an additive functor
\begin{equation}
\label{functor em}
\Fm\colon\CMe(F,\F)\to\AM(F,\F).
\end{equation}

We now prove some useful properties of this functor.

\begin{lemma}
\label{mult.lem}
The functor $\Fm$ maps the motive $M(X)$ of a smooth projective connected $F$-variety $X$
to the Artin motive $M(L)^F$, where $L$ is the field of constants of $X$.

If the field extension $L/F$ is Galois with Galois group $\Gamma$,
and $Y$ is the $L$-variety with $Y^F=X$,
the additive group of the ring $\End_{\CM(F,\F)}(M(X))$ is identified with
the direct sum
$\Oplus_{\sigma\in\Gamma}\Ch_{\dim Y}(Y_\sigma\times Y)$,
and $\Fm$ sends an element $\alpha\in\Ch_{\dim Y}(Y_\sigma\times Y)$ to
$$
\mult(\alpha)\cdot\sigma\in\F[\Gamma]=\End_{\AM(F,\F)}\bigl(M(L)^F\bigr),
$$
where $\mult(\alpha)$ is the {\em multiplicity} (see \cite[\S75]{EKM})
of the degree $0$ correspondence $\alpha\colon Y_\sigma\corr Y$.
\end{lemma}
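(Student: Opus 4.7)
The plan is to combine the identification $\Ch^0(\bar X)\simeq\F[\Gamma_F/\Gamma_L]$ established just before the lemma with a direct Galois-descent decomposition of $X\times_F X$.

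For the first assertion, since $X$ is smooth, projective, and connected with field of constants $L$, the preceding discussion provides an isomorphism of permutation $\F[\Gamma_F]$-modules $\Ch^0(\bar X)\simeq\F[\Gamma_F/\Gamma_L]$. Under the anti-equivalence of \S\ref{Artin.section}, the right-hand side corresponds to the Artin motive $M(L)^F$, so $\Fm(M(X))=M(L)^F$.

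To identify the endomorphism ring, I would decompose $X\times_F X$ into its $F$-connected components. Writing $X=Y^F$ and using the Galois splitting $L\otimes_F L\simeq\Prod_{\sigma\in\Gamma}L$ on the second tensor factor, one obtains an isomorphism of $F$-varieties
\[
X\times_F X \;\simeq\; \Coprod_{\sigma\in\Gamma}(Y_\sigma\times_L Y)^F,
\]
where each summand is geometrically integral of dimension $\dim Y$ over $L$. Taking mid-dimensional Chow groups yields the claimed direct-sum decomposition of $\End_{\CM(F,\F)}(M(X))$.

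To compute $\Fm$ on an element $\alpha\in\Ch_{\dim Y}(Y_\sigma\times_L Y)$, I would substitute $\alpha$ into the formula $\beta\mapsto\bar p_{1*}\bigl(\bar p_2^*(\beta)\cdot\bar\alpha\bigr)$ coming from the construction of $\Fm$, and evaluate it on the basis $\{[\bar Y_\gamma]\}_{\gamma\in\Gamma_F/\Gamma_L}$ of $\Ch^0(\bar X)$ provided by \eqref{Xbar}. The base change $\bar\alpha$ is concentrated on the $\Gamma$-orbit of components $\bar Y_\gamma\times\bar Y_{\gamma\sigma}$ of $\bar X\times\bar X$, so for each $\gamma_0$ the product $\bar p_2^*([\bar Y_{\gamma_0}])\cdot\bar\alpha$ is supported on the single component $\bar Y_{\gamma_0\sigma^{-1}}\times\bar Y_{\gamma_0}$, and its push-forward along $\bar p_1$ is $\mult(\alpha)\cdot[\bar Y_{\gamma_0\sigma^{-1}}]$ by the very definition of the multiplicity of a correspondence in \cite[\S75]{EKM}. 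Transporting the resulting action of $\F[\Gamma_F/\Gamma_L]$ through the anti-equivalence into $\F[\Gamma]=\End_{\AM(F,\F)}(M(L)^F)$ gives $\mult(\alpha)\cdot\sigma$.

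The step I expect to be the main obstacle is the bookkeeping of Galois twists: the correspondence functor is contravariant and the anti-equivalence between permutation modules and Artin motives is also contravariant, so one must check that these two reversals compose to produce the element $\sigma$ rather than $\sigma^{-1}$ in the final formula, and that the labeling used in the decomposition of $X\times_F X$ is compatible with the conventions identifying $\Ch^0(\bar X)$ with $\F[\Gamma_F/\Gamma_L]$.
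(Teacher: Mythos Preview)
Your approach is exactly the one the paper takes: split $X\times_F X$ via $L\otimes_F L\simeq\prod_{\sigma\in\Gamma}L$, then compute the induced map on $\Ch^0(\bar X)$ componentwise. The only correction concerns the very bookkeeping you flag. With the paper's convention \eqref{LtensL}, the base change of $Y_\sigma\times_L Y$ to $\bar F$ lands on the components $\bar Y_{\sigma\tau}\times\bar Y_\tau$ (indexed by $\tau\in\Gamma$), not on $\bar Y_\gamma\times\bar Y_{\gamma\sigma}$; consequently the map on basis vectors is $[\bar Y_{\gamma_0}]\mapsto\mult(\alpha)\,[\bar Y_{\sigma\gamma_0}]$ rather than $[\bar Y_{\gamma_0\sigma^{-1}}]$. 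Evaluating at $\gamma_0=1$ then gives $1\mapsto\mult(\alpha)\cdot\sigma$ directly, and no further appeal to the two contravariances is needed --- the identification $\End_{\AM}(M(L)^F)=\F[\Gamma]$ is set up so that this value \emph{is} $\mult(\alpha)\cdot\sigma$.
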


\begin{proof}
Since $L/F$ is finite separable, we may assume $L\subset \bar F$; let $\Gamma_L=\Gal(\bar{F}/L)$.
The first assertion of Lemma \ref{mult.lem} is a direct consequence of the definition of $\Fm$, since, as noticed at the beginning of this section, $\Ch^0(\bar X)$ is isomorphic to the $\F[\Gamma_F]$-module $\F[\Gamma_F/\Gamma_L]$, which corresponds to the Artin motive $M(L)^F$.

Assume now $L/F$ is Galois. Using the identification
\begin{equation}
\label{LtensL}
L\otimes_FL = \Prod_{\Gamma} L,\ \ x\otimes y\mapsto (\sigma(x)y)_{\sigma\in\Gamma},
\end{equation}
we get that $X\times X=Y\times\Spec(L\otimes_F L)\times Y=\coprod_{\sigma\in\Gamma}Y_\sigma\times Y$.
Therefore, we have
$$
\End M(X)=\Ch_d(X\times X)=\Oplus_{\sigma\in\Gamma}\Ch_d(Y_\sigma\times Y),
$$
where $d$ is the dimension of $X$.

By \eqref{Xbar}, we have $\bar X=\coprod_{\sigma\in\Gamma}\bar Y_\sigma,$ hence
$
\Ch_d(\bar X\times \bar X)=\Oplus_{(\sigma,\sigma')\in\Gamma^2}\Ch_d\bigl(\bar Y_\sigma\times\bar Y_{\sigma'}\bigr).
$
To determine the image of $\alpha$ in $\Ch_d(\bar X\times\bar X)$, we use the following identifications:
$$
\bar X\times\bar X=(X\times X)\times_{\Spec F}\Spec{\bar F}=\coprod_{\sigma\in\Gamma}(Y_\sigma\times Y)\times\Spec(L\otimes \bar F).
$$
Using again the identification \eqref{gal.eq}, we get
$
\bar X\times \bar X=\coprod_{\sigma\in\Gamma}\bigl(\coprod_{\tau\in\gamma} \bar Y_{\sigma\tau}\times \bar Y_\tau\bigr).
$
Hence, an element $\alpha\in\Ch_d(Y_\sigma\times Y)\subset \Ch_d(X\times X)$ satisfies
$
\bar \alpha\in\Oplus_{\tau\in\Gamma}\Ch_d(\bar Y_{\sigma\tau}\times \bar Y_\tau)\subset \Ch_d(\bar X\times \bar X).
$
The endomorphism of $M(L)^F$, induced by $\alpha$, corresponds to the endomorphism of $\F[\Gamma]$-modules defined by
$$
\Ch^0(\bar X)\stackrel{\bar p_2^*}{\longrightarrow}\Ch^0(\bar X\times \bar X)\stackrel{\cdot \bar \alpha}{\longrightarrow}\Ch_d(\bar X\times \bar X)\stackrel{\bar p_{1*}}{\longrightarrow}\Ch^0(\bar X).
$$
The intersection product of the image under $p_2^*$ of  $[\bar Y]\in\Ch_0(\bar X)$ with $\bar \alpha$ is the projection of $\bar\alpha$ to the summand $\Ch_d(\bar Y_\sigma\times\bar Y)$.
This element maps under $p_{1*}$ to $\mult(\alpha)[\bar Y_\sigma]$.
Identifying $\Ch^0(\bar X)$ with $\F[\Gamma]$, we get that the endomorphism of $\Ch^0(\bar X)$, induced by $\alpha$,
maps $1$ to $\mult(\alpha)\cdot \sigma$; hence, it is the left multiplication by $\mult(\alpha)\cdot\sigma$, as claimed.
\end{proof}

\begin{rem}
\label{mult.rem}
If $X$ is geometrically connected, its field of constants is $F$, and we get that $\Fm(M(X))=\F$.
The homomorphism $\End M(X)\to\End\F$
of the endomorphism rings is the multiplicity homomorphism
$\Ch_{\dim X}(X\times_F X)\rightarrow \F$.
\end{rem}

Recall that for a finite separable field extension $K/F$ and a $K$-motive $M$,
we denote by $M^F$ its corestriction to $F$, defined as in~\cite[\S3]{outer}.

\begin{lemma}
\label{cor.lem}
The functor $\Fm$ commutes with the corestriction functor.
In particular, for every finite separable field extension $K/F$, and every motive $M\in\CM(K,\F)$, we have $\Fm(M^F)=\Fm(M)^F$.
\end{lemma}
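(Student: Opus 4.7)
The plan is to use the anti-equivalence of Section~\ref{Artin.section} between Artin motives and direct summands of continuous permutation modules to translate both functors $\Fm\circ(-)^F$ and $(-)^F\circ\Fm$ into permutation-module language, and then observe that they agree with the induction functor $\Ind_{\Gamma_K}^{\Gamma_F}$.

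On the Artin motive side, the corestriction functor sends $M(L')^K$ to $M(L')^F$ for every finite separable $L'/K$ embedded in $\bar F$, and under the anti-equivalence these Artin motives correspond to $\F[\Gamma_K/\Gamma_{L'}]$ and $\F[\Gamma_F/\Gamma_{L'}]=\Ind_{\Gamma_K}^{\Gamma_F}\F[\Gamma_K/\Gamma_{L'}]$ respectively. By additivity, $(-)^F\colon\AM(K,\F)\to\AM(F,\F)$ is thus identified with the induction functor on permutation modules.

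On the motive side, by additivity of $\Fm$ and $(-)^F$ and the universal property of the idempotent completion, it suffices to verify the equality $\Fm(M(Y)^F)=\Fm(M(Y))^F$ on objects $M(Y)$ and on morphisms in the category of degree-$0$ correspondences. For a smooth projective connected $K$-variety $Y$ with field of constants $K'$, the identification~\eqref{Xbar} applied to $X=Y^F$ (whose field of constants is again $K'$, since an element of $K(Y)$ algebraic over $F$ is automatically algebraic over $K$) gives $\overline{Y^F}=\coprod_{\gamma\Gamma_{K'}\in\Gamma_F/\Gamma_{K'}}\bar Y_\gamma$ and a $\Gamma_F$-equivariant isomorphism $\Ch^0(\overline{Y^F})\simeq\Ind_{\Gamma_K}^{\Gamma_F}\Ch^0(\bar Y)$; this yields the equality on objects. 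For a degree-$0$ correspondence $\alpha\colon Y_1\corr Y_2$ over $K$ with corestriction $\alpha^F\colon Y_1^F\corr Y_2^F$, a computation in the spirit of the proof of Lemma~\ref{mult.lem}, using~\eqref{gal.eq} and~\eqref{LtensL}, identifies the induced map $\Ch^0(\overline{Y_2^F})\to\Ch^0(\overline{Y_1^F})$ with $\Ind_{\Gamma_K}^{\Gamma_F}\Fm(\alpha)$, proving the equality on morphisms.

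The main obstacle is the morphism case: one must use the cycle-theoretic description of $\alpha^F$ from~\cite[\S3]{outer} and track the coset decomposition
$$\overline{Y_1^F}\times_{\bar F}\overline{Y_2^F}=\coprod_{(\gamma,\gamma')\in(\Gamma_F/\Gamma_K)^2}\bar Y_{1,\gamma}\times_{\bar F}\bar Y_{2,\gamma'}$$
to see that $\bar\alpha^F$ is supported on the diagonal components $\gamma=\gamma'$, where it restricts to the $\gamma$-conjugate of $\bar\alpha$. This diagonal support, combined with compatibility of pullback and pushforward with the decomposition, is precisely what makes the induced permutation-module map coincide with the induction of $\Fm(\alpha)$.
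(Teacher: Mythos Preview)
Your proposal is correct and follows essentially the same route as the paper's proof: both reduce to objects and morphisms in the correspondence category, identify the corestriction on Artin motives with the induction functor $\Ind_{\Gamma_K}^{\Gamma_F}$ on permutation modules, and for morphisms use the push-forward description of $\alpha^F$ from \cite[\S3]{outer} together with the observation that $\overline{\alpha^F}$ is supported on the diagonal cosets, which is exactly what makes $\Fm(\alpha^F)$ equal to the induced map. The paper phrases the morphism step slightly more abstractly (directly invoking the induction functor of \cite[\S2.1]{MR3821516}) while you spell out the diagonal-support mechanism, but the content is the same.
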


\begin{proof}
Let $Y$ be a connected smooth projective $K$-variety, and let $L$ be its field of constants.
There exists an $L$-variety $Z$ such that $Y=Z^K$.
It follows that $Y^F=Z^F$.
Therefore,
\begin{equation*}
\Fm\bigl(M(Y)^F\bigr)=M(L)^F=\bigl(M(L)^K\bigr)^F=\Fm\bigl(M(Y)\bigr)^F
\end{equation*}
showing that the functor $\Fm$ commutes with the corestriction functor on objects.

To verify commutativity on morphisms, we consider a degree $0$ correspondence
$$
\alpha\colon Z_1^K\corr Z_2^K,
$$
where $Z_i$ for $i=1,2$  is a geometrically connected smooth projective
variety over a finite separable field extension $L_i$ of $K$.
Viewing $\alpha$ as a morphism $M(Z_1)^K\to M(Z_2)^K$ in the category $\CM(K,\F)$,
its corestriction $\beta\colon M(Z_1)^F\to M(Z_2)^F$ is given by the push-forward of
$\alpha\in\Ch(Z_1^K\times Z_2^K)$ with respect to the closed embedding
$Z_1^K\times Z_2^K\hookrightarrow Z_1^F\times Z_2^F$ (see \cite[\S3]{outer}).
The image $\Fm(\alpha)$ of the morphism $\alpha$ under the functor $\Fm$ is given by the homomorphism
of Galois modules $\Ch^0((Z_2^K)_{\bar{F}})\to\Ch^0((Z_1^K)_{\bar{F}})$ induced by
$\alpha$.
Similarly, the image $\Fm(\beta)$ of the morphism $\beta$ is given by the homomorphism
$\Ch^0((Z_2^F)_{\bar{F}})\to\Ch^0((Z_1^F)_{\bar{F}})$ induced by $\beta$.
Identifying the $\Gamma_F$-module $\Ch^0((Z_i^F)_{\bar{F}})$ with the image
of the $\Gamma_K$-module $\Ch^0((Z_i^K)_{\bar{F}})$ under the induction functor
of \cite[\S2.1]{MR3821516},
one identifies the homomorphism $\Fm(\beta)$
with the image of the homomorphism $\Fm(\alpha)$.
We finish by the observation that the corestriction functor $\AM(K,\F)\to\AM(F,\F)$ is also given by the induction functor on the categories of Galois modules.
\end{proof}

\begin{rem}
\label{Fm-res-tens.rem}
The functor $\Fm$ commutes as well with the restriction functors (given by arbitrary base field extensions) and respects
tensor products.
\end{rem}

\begin{rem}
\label{retraction}
The restriction of $\Fm$ to the subcategory of Artin motives is isomorphic to the identity functor. So, we may view $\Fm$ as a retraction of the category of effective Chow motives to its subcategory of Artin motives.
\end{rem}

\section{A-upper motives}
\label{Aupper.section}
Throughout this section, $X$ denotes a quasi-homogeneous $F$-variety in the sense of~\cite[\S 2]{outer}; each connected component of $X$ is the corestriction $Y^F$ of some projective homogeneous variety $Y/L$ under the action of a reductive algebraic group, where $L$ is a separable finite field extension of $F$.
In particular, the Krull-Schmidt property holds for $M(X)$, see~\cite[Cor. 2.2]{outer}.

If $X$ is connected, then any complete decomposition of $M(X)$ contains a unique indecomposable summand $P$ such that $\Ch^0(P)\not=0$; it is also the unique indecomposable summand defined by an idempotent of multiplicity $1$, see~\cite[Lem. 2.8]{MR2110630}. This motive $P$ is denoted by $U(X)$ and called the \emph{upper motive} of $X$. More generally, a summand $M$ of $M(X)$ is called upper if $\Ch^0(M)\not =0$ and non-upper otherwise.

The A-upper motives of a quasi-homogeneous variety $X$ are defined as follows:
\begin{dfn}
\label{Aupper.def}
Let $X$ be a quasi-homogeneous $F$-variety.
A motive $P\in\CM(F;\F)$ is called an A-upper motive of $X$ if it is isomorphic to an indecomposable summand of $M(X)$ and satisfies $\Ch^0(\bar P)\not =0$.
\end{dfn}
By definition of the functor $\Fm$, see \S\,\ref{Fm.section}, the A-upper motives of $X$ are the indecomposable summands $P$ of $M(X)$ with $\Fm(P)\not =0$.

\begin{example}
\label{A-upper.ex}
(i) The upper motive of a connected component $X^0=Y^F$ of $X$ is an A-upper motive of $X$. Indeed, if $p$ is an idempotent defining $U(X^0)$, $p$ has multiplicity $1$, so $\Fm(p)=1$ and $\Fm\bigl(U(X^0)\bigr)=\F$.

(ii) If $X$ is geometrically connected, $\Ch^0(\bar X)=\F$. So $X$ has a unique A-upper motive which is its upper motive.

(iii) Artin motives provide examples of A-upper motives that are not upper motives. By Remark~\ref{retraction}, given a finite separable field extension $L/F$, all indecomposable summands of $M(L)^F$ actually are A-upper summands. To give an explicit example, we use the notations of Example~\ref{Artin}(ii), where $M(L)^F=\F\oplus A_1\oplus A_2$. The motives $A_1$ and $A_2$ are A-upper motives of $M(L)^F$. This may also be checked directly using Remark~\ref{counterexample}, which shows that $\bar A_1=\bar A_2=\F$, so that $\Ch^0(\bar A_1)=\Ch^0(\bar A_2)=\F$ is nontrivial. Moreover, since $\Ch^0(M(L)^F)=\Ch^0(\F)=\F$, we have $\Ch^0(A_1)=\Ch^0(A_2)=0$.
\end{example}

In general, the A-upper motives of each connected component of a quasi-homogeneous variety can be described using the following proposition:
\begin{prop}
Let $X=Y^F$ be the corestriction of a projective homogeneous variety $Y$ under a reductive algebraic group, defined over a finite separable field extension $L/F$. We have:
\begin{enumerate}
\item $\Fm\bigl(M(X)\bigr)=\Fm\bigl(U(Y)^F\bigr)=M(L)^F$;
\item the A-upper motives of $X$ are the indecomposable summands of $U(Y)^F$ with nontrivial image under $\Fm$.
\end{enumerate}
\end{prop}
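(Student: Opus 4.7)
The plan is to reduce everything to the assertion $\Fm\bigl(U(Y)\bigr)=\F$ in $\CM(L,\F)$, and then transport this along the corestriction functor using Lemma~\ref{cor.lem}. The variety $Y$ being projective homogeneous under a reductive algebraic group is geometrically integral over $L$, so in particular geometrically connected. Remark~\ref{mult.rem} then says that $\Fm(M(Y))$ is the Tate motive $\F=M(L)$ in $\CMe(L,\F)$, and that on endomorphism rings $\Fm$ coincides with the multiplicity homomorphism $\Ch_{\dim Y}(Y\times Y)\to\F$. Since the idempotent $p\in\End M(Y)$ defining $U(Y)$ has multiplicity $1$ by~\cite[Lem.~2.8]{MR2110630}, $\Fm(p)=1\in\F=\End \F$, hence $\Fm(U(Y))=\F$.

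For (1), since $Y$ is geometrically integral over $L$, the field of constants of the smooth projective connected $F$-variety $X=Y^F$ is exactly $L$, so Lemma~\ref{mult.lem} directly gives $\Fm(M(X))=M(L)^F$. For the second equality, Lemma~\ref{cor.lem} yields
\[
\Fm\bigl(U(Y)^F\bigr)=\Fm\bigl(U(Y)\bigr)^F=\F^F=M(L)^F,
\]
where in the last step $\F=M(L)$ is viewed as an $L$-motive.

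For (2), I would use the Krull--Schmidt decomposition $M(Y)=U(Y)\oplus N$ in $\CM(L,\F)$, where $N$ collects the remaining indecomposable summands. Applying the additive functor $\Fm$ and using $\Fm(M(Y))=\F$ together with $\Fm(U(Y))=\F$ and the indecomposability of the Tate motive, one obtains $\Fm(N)=0$. Corestriction then gives $M(X)=U(Y)^F\oplus N^F$ and, by Lemma~\ref{cor.lem}, $\Fm(N^F)=\Fm(N)^F=0$. Since the Krull--Schmidt property holds for $M(X)$ (by \cite[Cor. 2.2]{outer}), any indecomposable summand of $M(X)$ is isomorphic to an indecomposable summand of $U(Y)^F$ or of $N^F$; those coming from $N^F$ have trivial image under $\Fm$ and hence are not A-upper. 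Thus the A-upper motives of $X$ are precisely the indecomposable summands of $U(Y)^F$ on which $\Fm$ does not vanish, and conversely any such summand, being an indecomposable summand of $M(X)$ with nontrivial $\Fm$ image, is A-upper by Definition~\ref{Aupper.def}.

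There is no serious obstacle here: the content is really the identification $\Fm(U(Y))=\F$, which follows once one combines geometric connectedness of $Y$ with the multiplicity description of $\Fm$ in Remark~\ref{mult.rem}; the remainder is additivity of $\Fm$, commutation with corestriction (Lemma~\ref{cor.lem}), and Krull--Schmidt applied to $M(X)$.
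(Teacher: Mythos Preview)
Your proof is correct and follows essentially the same approach as the paper: establish $\Fm(U(Y))=\F$ via the multiplicity-$1$ idempotent, transport by Lemma~\ref{cor.lem}, and observe that the non-upper complement has trivial image under $\Fm$. The only cosmetic difference is that the paper shows $\Fm(Q)=0$ for each non-upper indecomposable $Q$ directly from $\mult(q)=0$, whereas you deduce $\Fm(N)=0$ by cancellation from $\F=\F\oplus\Fm(N)$ using Krull--Schmidt for Artin motives; both are equally valid and equally short.
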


\begin{proof} Consider the upper motive $U(Y)$ in $\CM(L,\F)$ and its corestriction $U(Y)^F$ in $\CM(F,\F)$. As explained in~\cite[\S 3]{outer}, $U(Y)^F$ contains the upper motive $U(Y^F)$ of $Y^F$ as a direct summand. But in general, $U(Y)^F$ is not indecomposable, hence not isomorphic to $U(Y^F)$.
By Example~\ref{A-upper.ex}(i), we have $\Fm\bigl(U(Y)\bigr)=M(L)$.
Since $\Fm$ commutes with the corestriction functor by Lemma~\ref{cor.lem}, we get $\Fm\bigl(U(Y)^F\bigr)=M(L)^F$.
Consider now a non-upper indecomposable summand $Q$ of $M(Y)$. It is defined by an idempotent $q$ with multiplicity $0$, so it has trivial image under $\Fm$. It follows that any indecomposable summand of $Q^F$ for such a $Q$ also has trivial image under $\Fm$, and the proposition follows.
\end{proof} 
\begin{rem}
Writing $p$ for an idempotent defining the upper summand $U(Y)$ of $M(Y)$, we get the following commutative diagram, where $i$ is the natural inclusion, $j$ is the surjective map defined by $j(f)=p^Ffp^F$ for all $f\in\End\big(M(Y)^F\big)$, and the commutativity follows from the fact that $p^F$ maps to $1$ in $\End\bigl(M(L)^F\bigr)$.
\begin{equation}
\label{comdiag.eq}
\xymatrix
@=8pt{
\End\big(U(Y)^F\big)\ar@<1ex>[dd]^{i}\ar[drr]&&\\
&&\End\big(M(L)^F\big)\\
\End\big(M(Y)^F\big)\ar@<1ex>[uu]^{j}\ar[urr]&&\\
}
\end{equation}
The arrows $i$ and $j$ in the diagram are additive homomorphisms; moreover, $i$ respects multiplication, and maps $p^F$ which is the unit element of $\End\big(U(Y)^F\big)$ to the idempotent $p^F$ which is generally different from $1$ in $\End\big(M(Y)^F\big)$. The other two arrows are ring homomorphisms.
\end{rem}

Recall that two $F$-varieties $X_1$ and $X_2$ are equivalent (mod $p$) if there exist correspondences $X_1\corr X_2$ and $X_2\corr X_1$ both with multiplicity $1\in\F$, see~\cite[\S2]{TateTraces}. We now prove that, under some conditions on $Y$, the A-upper motives of $X$ are classified by some Artin motives, namely the indecomposable summands of $M(L)^F$.
\begin{thm}
\label{Aupper.thm}
Let $X=Y^F$ be the corestriction of a projective homogeneous variety $Y$ under a reductive algebraic group, defined over a finite separable field extension $L/F$. We assume that either $L/F$ is Galois, or the degree of its normal closure is prime to $p$.

\noindent (a) All nontrivial indecomposable summands of $U(Y)^F$ have nontrivial image under $\Fm$, and hence are A-upper motives of $X$.

\noindent (b) Assume in addition that there exists a projective homogeneous variety $\hat Y$ under a reductive algebraic group, defined over $F$, and such that $\hat Y_L$ is equivalent to $Y$ mod $p$. Then the following holds:
\begin{enumerate}
\item
\label{1}
every summand in $M(L)^F$ is isomorphic to the image under $\Fm$ of a summand in $U(Y)^F$;
\item
\label{2}
two summands in $U(Y)^F$ with isomorphic images under $\Fm$ are isomorphic;
\item
\label{3}
a summand in $U(Y)^F$ is indecomposable if and only if its image under $\Fm$ is so.
\end{enumerate}
\end{thm}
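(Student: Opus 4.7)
To establish (a), the plan is to fix a nonzero indecomposable summand $M$ of $U(Y)^F$ and prove $\Fm(M)\neq 0$, equivalently $\Ch^0(\bar M)\neq 0$. I would first reduce the non-Galois case to the Galois case by base-changing to the Galois closure $E/F$: since $[E:F]$ is prime to $p$, the separable closure $\bar F$ is the same for $F$ and $E$, so $\Ch^0(\bar M)$ as an $\F$-vector space coincides whether $M$ is viewed over $F$ or over $E$, and its nontriviality is preserved. Assuming now $L/F$ Galois with group $\Gamma$, the key observation will be the Galois decomposition
\[
(U(Y)^F)_L \cong \bigoplus_{\sigma\in\Gamma} U(Y_\sigma),
\]
whose indecomposable summands are all upper motives with $\Ch^0(\overline{U(Y_\sigma)})=\F$. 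Since $M\neq 0$ implies $M_L\neq 0$, Krull-Schmidt over $L$ will force $M_L$ to contain some $U(Y_\sigma)$ as a summand, giving $\Ch^0(\bar M)\supseteq\Ch^0(\overline{U(Y_\sigma)})=\F\neq 0$.

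For (b), I will again work in the Galois case, the non-Galois case reducing analogously. Writing $p\in\End M(Y)$ for the idempotent defining $U(Y)$, the element $p^F\in\End M(X)$ defines $U(Y)^F$, and by Lemma~\ref{mult.lem} together with $\mult(p)=1$, $\Fm(p^F)=1\in\F[\Gamma]=\End M(L)^F$. The plan is to prove that $\Fm\colon\End U(Y)^F\to\F[\Gamma]$ is surjective with nil kernel, then apply idempotent lifting. For surjectivity: given $\sigma\in\Gamma$, since $\hat Y$ is defined over $F$, applying $\sigma$ to the multiplicity-one correspondences witnessing the equivalence of $\hat Y_L$ and $Y$ yields multiplicity-one correspondences between $\hat Y_L$ and $Y_\sigma$; transitivity then provides $\gamma_\sigma\in\Ch_{\dim Y}(Y_\sigma\times Y)$ of multiplicity one, regarded as an element of the $\sigma$-summand of $\End M(X)$. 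Functoriality of $\Fm$ will then yield
\[
\Fm(p^F\circ\gamma_\sigma\circ p^F) = \Fm(p^F)\cdot\Fm(\gamma_\sigma)\cdot\Fm(p^F) = 1\cdot\sigma\cdot 1 = \sigma,
\]
and as $\Gamma$ additively generates $\F[\Gamma]$, $\Fm$ will be surjective.

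For the nil-kernel step, any nonzero idempotent in $\ker\Fm|_{\End U(Y)^F}$ would define a nonzero summand of $U(Y)^F$ with vanishing $\Fm$-image; decomposing into indecomposable pieces via Krull-Schmidt and invoking part~(a) on each piece would yield a contradiction. Since $\End U(Y)^F$ is a finite-dimensional $\F$-algebra, a two-sided ideal with no nonzero idempotent is nil, so $\ker\Fm$ lies in the Jacobson radical. The classical idempotent-lifting theorem for a surjective ring homomorphism with nil kernel then gives a bijection between conjugacy classes of idempotents in $\End U(Y)^F$ and in $\F[\Gamma]$, sending primitives to primitives. Translating via the anti-equivalence of Section~\ref{Artin.section}, surjectivity on idempotent classes gives~\eqref{1}; two summands of $U(Y)^F$ with isomorphic $\Fm$-images correspond to conjugate idempotents of $\F[\Gamma]$ and hence lift to conjugate, i.e.\ isomorphic, summands, giving~\eqref{2}; and preservation of primitivity gives~\eqref{3}. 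The hard part will be the nil-kernel step: packaging the geometric content of part~(a) into an algebraic nilpotence statement relies crucially on Krull-Schmidt for $U(Y)^F$ (valid by the geometric splitness of its motive) together with the Artinian structure of the endomorphism ring.
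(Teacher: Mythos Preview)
Your treatment of the Galois case is essentially the paper's own proof: the decomposition $(U(Y)^F)_L\cong\bigoplus_{\sigma\in\Gamma} U(Y_\sigma)$ for~(a), and the ``surjective with nil kernel'' lemma for~(b), are exactly what the paper establishes (Lemma~\ref{nilkernel}). One correction: your claim that $\End U(Y)^F$ is a finite-dimensional $\F$-algebra is unjustified and generally false---the kernel of $\End M(X)\to\End M(\bar X)$ is nil by the nilpotence principle, but need not be finite. The correct substitute, which the paper uses, is \cite[Corollary~2.2]{upper}: any $f\in\End U(Y)^F$ has an idempotent power $f^n$; if $f\in\ker\Fm$ then $f^n$ is an idempotent in the kernel, hence zero by~(a), so $f$ is nilpotent. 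With this fix your idempotent-lifting argument goes through.

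The non-Galois case, however, does \emph{not} reduce ``analogously,'' and here your sketch has genuine gaps. For~(a), after base-change to the Galois closure $E$ you have $(U(Y)^F)_E\cong\bigoplus_{\gamma\Gamma_L}(U(Y)_E)_\gamma$, and to conclude that every nonzero summand of this has nontrivial $\Ch^0$ you need $U(Y)_E$ to remain indecomposable, i.e.\ $U(Y)_E\cong U(Y_E)$. This is not automatic; the paper isolates it as a separate lemma (Lemma~\ref{indec.lem}) and proves it using the prime-to-$p$ hypothesis. For~(b), your Galois surjectivity argument relied on the identification $\End M(L)^F=\F[\Gamma]$ and on constructing explicit preimages of each $\sigma\in\Gamma$ via the correspondences $Y_\sigma\corr Y$. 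In the non-Galois case $\End M(L)^F$ is the Hecke-type algebra $\End_{\F[\Gamma]}(\F[\Gamma/H])$ with $H=\Gal(E/L)$, and there is no evident analogue of this construction. The paper takes a different route: using $\hat Y$ it proves the structural isomorphism $U(Y)^F\cong U(Y^F)\otimes M(L)^F$, and then shows (Lemma~\ref{indecArtin.lem}) that $U(Y^F)\otimes A$ is indecomposable for every indecomposable summand $A$ of $M(L)^F$, whence (1)--(3) follow directly. You should expect to need this tensor-product description, or something equivalent, rather than a straightforward transport of the Galois argument.
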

\begin{notation}
\label{A-upper.not}
Under the hypothesis of Theorem~\ref{Aupper.thm}\,(b), any indecomposable summand $A$ of $M(L)^F$ corresponds to an A-upper motive of $X=Y^F$. It is uniquely defined up to isomorphism, and will be denoted by $U_A(Y)$.
Note that the base field $F$ of the motive $U_A(Y)$ does not show up in this notation
because it is concealed in the motive $A$.
\end{notation}

With this notation in hand, Theorem \ref{Aupper.thm}\,(b) implies that, under its hypothesis,
given a complete decomposition $M(L)^F=A_1\oplus\dots\oplus A_r$, we get a complete decomposition
$U(Y)^F=U_{A_1}(Y)\oplus\dots\oplus U_{A_r}(Y)$, and the summands $U_{A_i}(Y)$ are the A-upper motives of $X=Y^F$.

The rest of the section is devoted to the proof of Theorem \ref{Aupper.thm}.

\begin{proof}[Proof of Theorem \ref{Aupper.thm} in the Galois case]
We assume that $L/F$ is Galois and we write $\Gamma$ for its Galois group $\Gamma=\Gamma_F/\Gamma_L$.

Let us first prove (a).
By \eqref{LtensL} we have
\[(Y^F)_L=\coprod_{\sigma\in\Gamma}Y_\sigma.\]
We claim that
\begin{equation}
    \label{rescor.eq}
\bigr(U(Y)^F\bigl)_L\simeq\Oplus_{\sigma\in\Gamma}U(Y)_\sigma=\Oplus_{\sigma\in\Gamma}U(Y_\sigma).
\end{equation}
Indeed, consider a complete decomposition $M(Y)=U(Y)\oplus Q_1\dots\oplus Q_r$ of $M(Y)$. By definition of the upper motive, we have $\Ch^0(Q_i)=0$ for $1\leq i\leq r$, hence $\Ch^0(Q_i^F)=0=\Ch^0\bigl((Q_i^F)_L\bigr)$, see~\cite[\S 3]{outer}. It follows that $\Oplus_{\sigma\in\Gamma}U(Y_\sigma)$ is a summand of
$\bigr(U(Y)^F\bigl)_L$. Over $\bar F$, both are isomorphic to $[L:F]$ copies of $\overline{U(Y)}$, and by the nilpotence principle, this proves~\eqref{rescor.eq}.

Let us now consider a nontrivial projector $p\in \End\big(U(Y)^F\big)$, defining a direct summand $M$ of $U(Y)^F$. We need to prove that its image $\Fm(p)=q\in\End\big(M(L)^F\big)$ is nontrivial.
The Krull-Schmidt property shows that $M_L=\Oplus_{\sigma\in S}U(Y_\sigma)$ for some nonempty subset $S$ of $\Gamma$.
By Lemma~\ref{mult.lem}, $\Fm(U(Y_\sigma))$ is the copy of $M(L)$ indexed by $\sigma$ in $$\Fm\big(M(Y^F)_L\big)=\Oplus_{\sigma\in\Gamma}M(L).$$ Therefore, $\Fm(M)$ is nontrivial, as required.\\

To prove (b), we use the following:
\begin{lemma}
\label{nilkernel}
The ring homomorphism \[\Fm:\,\End\big(U(Y)^F\big)\rightarrow\End\big(M(L)^F\big)\]
given by the functor $\Fm$ is surjective; its kernel consists of nilpotents.
\end{lemma}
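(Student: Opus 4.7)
The plan has two parts. First, I will establish surjectivity of
\[\Fm\colon \End\bigl(U(Y)^F\bigr)\to\F[\Gamma]=\End\bigl(M(L)^F\bigr)\]
by constructing, for each $\sigma\in\Gamma$, an explicit preimage of $\sigma$. Second, I will identify the kernel of $\Fm$ over $L$ with the Jacobson radical of a matrix algebra, and descend to $F$ via the nilpotence principle.

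For surjectivity, the diagram~\eqref{comdiag.eq} and Lemma~\ref{mult.lem} reduce the task to producing, for each $\sigma$, a multiplicity-$1$ correspondence $\gamma_\sigma\in\Ch_{\dim Y}(Y_\sigma\times Y)$: then $\Fm(\gamma_\sigma)=\sigma$, and $j(\gamma_\sigma)=p^F\gamma_\sigma p^F\in\End\bigl(U(Y)^F\bigr)$ has the same image by the commutativity of~\eqref{comdiag.eq}. To build such $\gamma_\sigma$, I invoke the hypothesis $\hat Y_L\sim Y$ and fix multiplicity-$1$ correspondences $\alpha\colon \hat Y_L\corr Y$ and $\beta\colon Y\corr \hat Y_L$. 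Since $\hat Y$ is defined over $F$, base change by $\sigma$ yields a canonical isomorphism $(\hat Y_L)_\sigma\cong \hat Y_L$; hence $\beta_\sigma\colon Y_\sigma\corr \hat Y_L$ still has multiplicity~$1$, and its composition with $\alpha$ is the desired $\gamma_\sigma$.

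For the nil kernel, the key preliminary observation is that under the hypothesis of~(b), every variety $Y_\sigma$ is equivalent (mod~$p$) to $Y$: applying $\sigma$ to $\hat Y_L\sim Y$ and using $(\hat Y_L)_\sigma\cong \hat Y_L$ gives $\hat Y_L\sim Y_\sigma$, and transitivity yields $Y\sim Y_\sigma$. By the standard correspondence criterion for upper motives, this forces $U(Y_\sigma)\cong U(Y)$ for every $\sigma\in\Gamma$, so~\eqref{rescor.eq} specializes to $\bigl(U(Y)^F\bigr)_L\cong U(Y)^{\oplus|\Gamma|}$. Setting $R:=\End_L\bigl((U(Y)^F)_L\bigr)$, we obtain $R\cong M_{|\Gamma|}\bigl(\End_L(U(Y))\bigr)$. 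The ring $\End_L(U(Y))$ is a finite-dimensional local $\F$-algebra with residue field $\F$, the quotient map being the multiplicity homomorphism (Remark~\ref{mult.rem}); hence its maximal ideal $\mathfrak m$ is nilpotent, and so is $J(R)=M_{|\Gamma|}(\mathfrak m)$. Combining Remark~\ref{Fm-res-tens.rem} with the additivity of $\Fm$ and the multiplicity description of Remark~\ref{mult.rem}, the restriction of $\Fm$ to $R$ is the entry-wise multiplicity map $R\to M_{|\Gamma|}(\F)$, so $\ker(\Fm|_R)=J(R)$. Given then $f\in\End_F\bigl(U(Y)^F\bigr)$ with $\Fm(f)=0$, Remark~\ref{Fm-res-tens.rem} yields $\Fm(f_L)=0$, so $f_L\in J(R)$ is nilpotent; the nilpotence principle recalled in~\S\ref{nota.section} then forces $f$ to be nilpotent.

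The main obstacle is the surjectivity step, which depends fundamentally on the auxiliary variety $\hat Y$: without an $F$-form of $Y$ up to mod-$p$ equivalence, there is no systematic way to realize an arbitrary Galois element $\sigma$ as the multiplicity class of a correspondence between the twisted varieties $Y_\sigma$ and $Y$. The nilpotence argument is then relatively routine, resting on the Krull--Schmidt structure of $R$ and the standard nilpotence principle, once the identification $\bigl(U(Y)^F\bigr)_L\cong U(Y)^{\oplus|\Gamma|}$ has been secured from the $\hat Y$ hypothesis.
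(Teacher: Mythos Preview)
Your proof is correct. The surjectivity argument is identical to the paper's: both use the hypothesis on $\hat Y$ to produce, for each $\sigma\in\Gamma$, a multiplicity-$1$ correspondence in $\Ch_{\dim Y}(Y_\sigma\times Y)\subset\End\bigl(M(Y)^F\bigr)$, which Lemma~\ref{mult.lem} sends to $\sigma$, and then pass to $\End\bigl(U(Y)^F\bigr)$ via the commutative diagram~\eqref{comdiag.eq}.

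For the nilpotent kernel, however, your route is genuinely different and noticeably longer. The paper argues in two lines using only part~(a) of Theorem~\ref{Aupper.thm}, already established in the Galois case without invoking $\hat Y$: if $\Fm(f)=0$, then some power of $f$ is a projector $q$ with $\Fm(q)=0$; by~(a), the summand cut out by $q$ is trivial, so $q=0$ and $f$ is nilpotent. Your argument instead passes to $L$, uses the $\hat Y$ hypothesis again to obtain $U(Y_\sigma)\cong U(Y)$ and hence $R\cong M_{|\Gamma|}\bigl(\End_L(U(Y))\bigr)$, identifies $\ker(\Fm|_R)$ with the Jacobson radical, and then descends by the nilpotence principle. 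This is a valid and structurally informative computation, but it makes the kernel statement depend on hypothesis~(b), whereas in the paper's organization the kernel part is a formal consequence of~(a) alone. Your closing remark that ``the nilpotence argument is then relatively routine \dots once the identification $\bigl(U(Y)^F\bigr)_L\cong U(Y)^{\oplus|\Gamma|}$ has been secured from the $\hat Y$ hypothesis'' thus undersells the situation: in the paper's approach, that identification is not needed for the kernel at all.
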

\begin{proof}
The second assertion follows from (a): consider $f\in\End\bigl(U(Y)^F\bigr)$ and assume $\Fm(f)=0$. Since we work with finite coefficients, by \cite[Corollary 2.2]{upper}, some power of $f$ is a projector $q$, which satisfies $\Fm(q)=0$. Therefore (a) asserts $q=0$, and this shows $f$ is nilpotent.

Let us assume now that there exists a variety $\hat Y$ as in the statement of Theorem~\ref{Aupper.thm}\,(b).
Since $Y$, hence also $Y_\sigma$, is equivalent to $\hat{Y}_L$
for any $\sigma\in \Gamma$, there exists a multiplicity $1$ correspondence $Y_\sigma\corr Y$.
As explained in Lemma~\ref{mult.lem}, we may view it as an element of $\End\big(M(Y)^F\big)$,
and it maps under $\Fm$ to $\sigma\in\F[\Gamma]$.
This proves that $\End\big(M(Y)^F\big)$ maps surjectively onto $\End\big(M(L)^F\big)=\F[\Gamma]$. Lemma~\ref{nilkernel} follows by diagram~\eqref{comdiag.eq}.
\end{proof}
\medskip

With this in hand, assertions (1), (2) and (3), in the Galois case, are proved as follows.

\begin{enumerate}
    \item By Lemma \ref{nilkernel}, the projector $p$ defining a given summand in $M(L)^F$ lifts to an element of
$\End \big(U(Y)^F\big)$.
By \cite[Corollary 2.2]{upper}, an appropriate power of this element is a projector which maps to $p$ under $\Fm$ .\\
\item Let $M_1$ and $M_2$ be summands of $U(Y)^F$.
Any morphism between $\Fm(M_1)$ and $\Fm(M_2)$ is given by an endomorphism of $M(L)^F$ and therefore,
by Lemma \ref{nilkernel}, can be lifted to a
morphism between $M_1$ and $M_2$.
In particular, if $\Fm(M_1)$ and $\Fm(M_2)$ are isomorphic, mutually inverse isomorphisms lift to some morphisms
$f\colon M_1\to M_2$ and $g\colon M_2\to M_1$.
By Lemma \ref{nilkernel} once again, each of the compositions $g\compose f$ and
$f\compose g$
has the form $\id+\varepsilon$ with some nilpotent $\varepsilon$ and so is an isomorphism (with inverse given by
the finite sum $\id-\varepsilon+\varepsilon^2-\dots$). It follows that $f$ and $g$ are isomorphisms, even though they need not be mutually inverse. \\
\item This is a consequence of (1) and (2), since the functor $\Fm$ preserves direct sum decompositions.
\qedhere
\end{enumerate}
\end{proof}

\begin{proof}[Proof of Theorem \ref{Aupper.thm} in the non-Galois case]
We first prove the following result, which is of independent interest, and will be used in the proof:
\begin{lemma}
\label{indec.lem}
Let $Z/F$ be a projective homogeneous variety under the action of a reductive group.
Let $L/F$ be a finite separable field extension, and assume that $[E:F]$ is prime to $p$, where $E$ is a normal closure of $L/F$. Then then restriction $U(Z)_L$ of the upper motive of $Z$ is isomorphic to $U(Z_L)$. In particular, it is indecomposable.
\end{lemma}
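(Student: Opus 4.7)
The plan is to first establish the lemma in the Galois case $L=E$, and then deduce the general statement by descent. For the reduction, assume we have proved $U(Z)_E \simeq U(Z_E)$ (so in particular $U(Z)_E$ is indecomposable). Since $E/F$ is Galois, so is $E/L$, and the identity $(U(Z)_L)_E = U(Z)_E$ shows that the motive $U(Z)_L$, whose base change to $E$ is indecomposable, must itself be indecomposable. It is moreover upper, since multiplicity of a projector is preserved under restriction, so $\Ch^0(U(Z)_L) \neq 0$. Being indecomposable and upper, $U(Z)_L$ is isomorphic to $U(Z_L)$ by the Krull--Schmidt uniqueness of the upper motive.

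For the Galois case, set $\Gamma := \Gal(E/F)$, $R := \End(U(Z))$, and $S := \End(U(Z)_E)$. The restriction map $\res\colon R \to S$ and the pushforward $\Tr\colon S \to R$ induced by $\Spec E \to \Spec F$ satisfy $\Tr \circ \res = [E:F]\cdot\id$ and $\res \circ \Tr = \sum_{\gamma\in\Gamma} \gamma^*$. Since $[E:F]$ is invertible in $\F$, this identifies $R$ with the fixed subring $S^\Gamma$ via $\res$. The ring $R$ is a finite-dimensional $\F$-algebra, and since $U(Z)$ is indecomposable under the Krull--Schmidt property, $R$ is local; its only idempotents are $0$ and $1$.

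Now take a Krull--Schmidt decomposition $U(Z)_E = \bigoplus_j N_j^{n_j}$ with pairwise non-isomorphic indecomposable summands $N_j$. Each $\End(N_j)$ is local, so $S \simeq \prod_j M_{n_j}(\End(N_j))$, and the central idempotents of $S$ are in bijection with subsets of the set $\mathcal{I}$ of isomorphism classes $\{N_j\}_j$. The group $\Gamma$ acts on $S$ permuting these factors according to its action on $\mathcal{I}$. Any $\Gamma$-stable subset of $\mathcal{I}$ thus yields a $\Gamma$-invariant central idempotent of $S$, which lies in $S^\Gamma = R$ and is therefore $0$ or $1$. Hence the only $\Gamma$-stable subsets of $\mathcal{I}$ are $\emptyset$ and $\mathcal{I}$, i.e., $\Gamma$ acts transitively on $\mathcal{I}$. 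On the other hand, $U(Z_E)$ is a summand of $U(Z)_E$ (since $\Ch^0(U(Z)_E)\neq 0$) and is $\Gamma$-fixed as an isomorphism class, because the Galois action preserves multiplicity and $U(Z_E)$ is the unique upper indecomposable summand of $M(Z_E)$. Transitivity plus a fixed point forces $\mathcal{I} = \{U(Z_E)\}$, so $U(Z)_E \simeq U(Z_E)^a$ for some $a \ge 1$; comparing the $\F$-dimensions of $\Ch^0$ on both sides (both equal to $\F$ on the left by geometric integrality of $Z_E$, and $\F^a$ on the right) gives $a = 1$.

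I expect the main obstacle to be the clean identification $R \simeq S^\Gamma$: it rests on the projection-formula identities relating restriction and pushforward on motivic endomorphism rings for a Galois extension of degree prime to $p$, together with the compatibility of $\Tr$ with the $\Gamma$-action. These are routine but require careful handling of the multiplicity formalism developed in \S\ref{Fm.section}. A minor point is the assertion that $\End(N_j)$ is local for each indecomposable summand $N_j$, which follows from the Krull--Schmidt property recalled in \S\ref{nota.section}.
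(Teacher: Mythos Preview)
Your reduction to the Galois case matches the paper's. For the Galois case itself you take a genuinely different route. The paper argues via corestriction: with $Y:=Z_L$, Remark~\ref{cores.rem} gives $Y^F$ equivalent to $Z$, so $U(Z)\simeq U(Y^F)$ is a summand of $U(Y)^F$ by \cite[Cor.~2.15]{upper}; then the already-proven identity \eqref{rescor.eq}, namely $(U(Y)^F)_L\simeq\bigoplus_{\sigma\in\Gamma}U(Y_\sigma)$, combined with $Y_\sigma\simeq Y$, shows at once that $U(Z)_L$ is a sum of copies of $U(Z_L)$, and comparing $\Ch^0$ pins down a single copy. Your Galois-descent argument on endomorphism rings is more self-contained (it avoids both \eqref{rescor.eq} and \cite[Cor.~2.15]{upper}) and would transplant to other Krull--Schmidt settings; the paper's argument is shorter here only because those ingredients are already in hand.

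There is, however, one genuine slip. The asserted isomorphism $S\simeq\prod_j M_{n_j}(\End(N_j))$ is false in general: non-isomorphic indecomposables $N_i,N_j$ can admit nonzero morphisms between them, so $\End\bigl(\bigoplus_j N_j^{n_j}\bigr)$ need not split as a product. What \emph{is} true, and suffices for your purposes, is that $S/\mathrm{rad}(S)\simeq\prod_j M_{n_j}(D_j)$ with each $D_j:=\End(N_j)/\mathrm{rad}(\End(N_j))$ a division ring, and that central idempotents of the finite $\F$-algebra $S$ biject with those of $S/\mathrm{rad}(S)$ (idempotent lifting modulo the nilpotent radical). With this correction the rest of your argument goes through unchanged: $\Gamma$ acts on $S$ by ring automorphisms, hence on $S/\mathrm{rad}(S)$, permuting the simple factors in accordance with its action on $\mathcal I$; a $\Gamma$-stable subset of $\mathcal I$ gives a central idempotent in $S^\Gamma=R$, which is local, forcing transitivity; and the $\Gamma$-fixed class of $U(Z_E)$ then yields $\mathcal I=\{U(Z_E)\}$.
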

\begin{proof}
Since $U(Z)_L$ is an upper summand in $M(Z_L)$, it contains $U(Z_L)$ as a direct summand; therefore, they are isomorphic if and only if $U(Z)_L$ is indecomposable. So, it is enough to prove that $U(Z)_E$ is indecomposable, or equivalently, isomorphic to $U(Z_E)$. Hence, we may assume that $L/F$ is Galois and $[L:F]$ is prime to $p$.
Consider the variety $Y=Z_L$.
The varieties $Y^F$ and $Z$ are equivalent (mod $p$) by Remark~\ref{cores.rem}, so Corollary 2.15 in~\cite{upper} shows that $U(Y^F)\simeq U(Z)$.
Hence, we may view $U(Z)$ as a direct summand of $U(Y)^F$, and $U(Z)_L$ as a direct summand of $\bigl(U(Y)^F)_L$.
Since $Y=Z_L$, we have $Y_\sigma\simeq Y$ for all $\sigma$ in the Galois group of $L/F$. It follows by ~\eqref{rescor.eq} that $U(Z)_L$ is isomorphic to a direct sum of copies of $U(Z_L)$.
Since $\Ch^0\bigl(U(Z)_L\bigr)=\F=\Ch^0\bigl(U(Z_L)\bigr)$, we get $U(Z)_L=U(Z_L)$ as expected.
 \end{proof}

We may now prove part (a) of Theorem~\ref{Aupper.thm}.
We continue to assume that the degree of normal closure of $L/F$ is prime to $p$.
Let $M$ be a nontrivial summand of $U(Y)^F$, and consider its restriction $M_E$, which also is nontrivial by the nilpotence principle. Since $E$ is the normal closure of $L/F$, $\gamma(L)\subset E$ for all $\gamma\in \Gamma_F$, and we have $L\otimes_F E=\prod_{\Gamma_F/\Gamma_L} E$, as in~\eqref{gal.eq}. Therefore, the same computations as in the Galois case show that
\[\bigl(U(Y)^F\bigr)_E\simeq \bigoplus_{\gamma\Gamma_L\in\Gamma_F/\Gamma_L}(U(Y)_E)_\gamma,\]
where $\bigl(U(Y)_E\bigr)_\gamma$ does not depend on the choice of $\gamma$ in its class, since $Y$ is defined over $L$. By Lemma~\ref{indec.lem}, $U(Y)_E=U(Y_E)$ is indecomposable, and so are the $\bigl(U(Y)_E\bigr)_\gamma=U\bigl((Y_E)_\gamma\bigl)$. It follows that $M_E$, which is a direct sum of some of these upper motives, has non-zero image under the functor $\Fm$, as required.

To prove part (b) of Theorem~\ref{Aupper.thm}, consider a projective homogeneous variety $\hat Y$ over $F$ such that $Y$ is equivalent to $(\hat Y)_L$ as in the statement. Applying again Remark~\ref{cores.rem} and~\cite[Cor. 2.15]{upper}, we get that $\hat Y$ is equivalent to $Y^F$, and $U(\hat Y)\simeq U(Y^F)$. It follows that $U(Y^F)_L\simeq U(\hat Y)_L$. Applying Lemma~\ref{indec.lem} to $\hat Y$, we get:
\begin{equation}
\label{upper.eq}
U(Y^F)_L\simeq U(\hat Y_L)\simeq U(Y).
\end{equation}
Using this, we now prove that the corestriction of the upper motive of $Y$ satisfies
\begin{equation}
U(Y)^F\simeq U(Y^F)\otimes M(L)^F.
\end{equation}
Indeed, since $U(Y)$ is an $L$-motive, we have $U(Y)\simeq U(Y)\otimes M(L)$. Taking the corestriction and using~\eqref{upper.eq}, we get
\[U(Y)^F\simeq \bigl(U(Y^F)_L\otimes M(L)\bigr)^F\simeq U(Y^F)\otimes M(L)^F,\]
where the second isomorphism is a particular case of the general formula $(M_L\otimes N)^F=M\otimes N^F$, which holds for any $F$-motive $M$ and $L$-motive $N$, with $L/F$ finite and separable.

Hence, any direct summand $A$ in $M(L)^F$ produces a direct summand $U(Y^F)\otimes A$ in $U(Y)^F$.
It satisfies $\Fm(U(Y^F)\otimes A)=\F\otimes A= A$, see Example~\ref{A-upper.ex} and Remark~\ref{Fm-res-tens.rem}.
To finish the proof, we will use the following:
\begin{lemma}
\label{indecArtin.lem}
We continue to assume that the degree of normal closure of $L/F$ is prime to $p$.
Under the hypothesis of Theorem~\ref{Aupper.thm}\,(b), the motive
$M=U(Y^F)\otimes A$ is indecomposable for any indecomposable summand $A$ of $M(L)^F$.
\end{lemma}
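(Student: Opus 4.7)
The plan is to proceed by contradiction, using part (a) of Theorem \ref{Aupper.thm} (already established earlier in this argument), the compatibility of $\Fm$ with tensor products from Remark \ref{Fm-res-tens.rem}, and the Krull-Schmidt property. Suppose $M=U(Y^F)\otimes A$ admits a nontrivial decomposition $M=M_1\oplus M_2$ with both $M_1$ and $M_2$ nonzero. Since $A$ is a direct summand of $M(L)^F$, tensoring with $U(Y^F)$ realizes $M$ as a direct summand of $U(Y^F)\otimes M(L)^F$, which by the isomorphism $U(Y)^F\simeq U(Y^F)\otimes M(L)^F$ proved in the paragraph preceding the lemma exhibits $M$, and consequently both $M_1$ and $M_2$, as direct summands of $U(Y)^F$.

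Next I would compute $\Fm(M)$. Under the hypothesis of Theorem~\ref{Aupper.thm}\,(b), combining Remark~\ref{cores.rem} with \cite[Cor.\,2.15]{upper} yields $U(Y^F)\simeq U(\hat{Y})$. Since $\hat{Y}$ is projective homogeneous under a reductive $F$-group, it is geometrically connected with field of constants $F$, so Example~\ref{A-upper.ex}(i) gives $\Fm(U(\hat{Y}))=\F$; hence $\Fm(U(Y^F))=\F$. Using Remark~\ref{Fm-res-tens.rem}, we deduce
\[
\Fm(M)\;=\;\Fm(U(Y^F))\otimes\Fm(A)\;=\;\F\otimes A\;=\;A.
\]
Since $A$ is indecomposable, the induced decomposition $\Fm(M)=\Fm(M_1)\oplus\Fm(M_2)$ forces one summand to vanish; say $\Fm(M_2)=0$.

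To reach a contradiction, it suffices to show that every nonzero direct summand of $U(Y)^F$ has nonzero image under $\Fm$. Since $Y^F$ is quasi-homogeneous, the Krull-Schmidt property discussed in Section~\ref{nota.section} applies, so $M_2$ decomposes as a finite direct sum of nontrivial indecomposable summands of $U(Y)^F$. By part (a) of Theorem~\ref{Aupper.thm}, each has nonzero image under $\Fm$, so $\Fm(M_2)\neq 0$, contradicting the previous step. Hence $M$ is indecomposable. The argument rests entirely on previously-proved ingredients and I do not anticipate a serious obstacle; the most delicate point is the identification $\Fm(U(Y^F))=\F$, which is precisely where the existence of $\hat{Y}$ enters the proof.
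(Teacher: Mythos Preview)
Your proof is correct and takes a genuinely different, more economical route than the paper's. The paper establishes the lemma by analyzing $M=U(Y^F)\otimes A$ over two auxiliary field extensions: over the Galois closure $E$ of $L/F$, where $A_E$ splits as $\rk(A)$ copies of $\F$ while $U(Y^F)_E$ remains indecomposable by Lemma~\ref{indec.lem}, so $M_E$ is $\rk(A)$ copies of an indecomposable motive; and over the function field $F(X_G)$ of the Borel variety of the ambient group, where $A_{F(X_G)}$ remains indecomposable (Corollary~\ref{AFX}) and $U(Y^F)_{F(X_G)}$ contributes a summand $\F$. A rank count interpolating between these two pictures forces any nontrivial summand of $M$ to coincide with $M$. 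You bypass both base changes entirely: since $M$ sits inside $U(Y)^F$ via the isomorphism $U(Y)^F\simeq U(Y^F)\otimes M(L)^F$ (already established just before the lemma), and since $\Fm(M)=A$ is indecomposable (a computation the paper itself records immediately before stating the lemma), part~(a) of Theorem~\ref{Aupper.thm}, proved earlier in both the Galois and non-Galois cases, rules out a nontrivial splitting at once. Your argument is shorter and avoids introducing the Borel variety $X_G$; the paper's geometric approach is more explicit about what happens over $E$ and $F(X_G)$, but that extra information is not needed for the lemma itself.
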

Assuming the lemma, we get that a complete decomposition $M(L)^F=A_1\oplus\dots\oplus A_k$ of the spectrum of $L$ yields a complete decomposition
\[U(Y)^F\simeq U(Y^F)\otimes M(L)^F\simeq \bigoplus_{i=A}^k \bigl(U(Y^F)\otimes A_i\bigr).\]
Assertions (1), (2) and (3) of the theorem follows immediately.
So, it only remains to prove Lemma~\ref{indecArtin.lem}.
For this, we consider motivic decompositions of $U(Y^F)\otimes A$ over various extensions of the base field $F$.

First, observe that the motive $A_E$, which is a direct summand of the motive of $L\otimes_F E$ is a direct sum of $\rk(A)$ copies of the Tate motive $\F=M(E)$, where $\rk(A)$ is the rank of $A$. Since in addition $U(Y^F)_E$ is indecomposable by Lemma~\ref{indec.lem}, we get that $\bigl(U(Y^F)\otimes A\bigr)_E$ is a direct sum of $r$ copies of the indecomposable motive $U(Y^F)_E$.

On the other hand, the variety $\hat Y$ is a projective homogeneous variety under the action of a reductive group $G$ defined over $F$. Let $X_G$ be the $F$-variety of Borel subgroups of $G$. Over $F(X_G)$, the group $G$ is quasi-split, so $U(Y^F)_{F(X_G)}=U(\hat Y)_{F(X_G)}$ is a direct sum of a Tate motive $\F=M\bigl(F(X_G)\bigr)$ and some positive shifts of some effective motives. Hence, $\bigl(U(Y^F)\otimes A\bigr)_{F(X_G)}$ decomposes as a direct sum of the motive $A_{F(X_G)}$ and a motive $B$ with $\Ch^0(B_K)=0$ for all field extensions $K/F(X_G)$.
By Corollary~\ref{AFX}, we know in addition that $A_{F(X_G)}$ is indecomposable.

Using these observations, let us now prove Lemma~\ref{indecArtin.lem}. Consider a nontrivial direct summand $U$ in $U(Y^F)\otimes A$. Over $E$, $U_E$ is a direct sum of $s$ copies of $U(Y^F)_E$, for some $s$ with $1\leq s\leq \rk(A)$, which also is the dimension over $\F$ of $\Ch^0(U_E)$.
Therefore, $\Ch^0(U_{E(X_G)})$ is non zero, and this implies that $A_{F(X_G)}$ is a direct summand of $U_{F(X_G)}$.
It follows that
\[s\geq \dim \Ch^0(A_{E(X_G)})=\rk(A).\]
So $U_E$ is a direct sum of $\rk(A)$ copies of $U(Y^F)_E$ and by the nilpotence principle, this implies $U=U(Y^F)\otimes A$. This proves that $U(Y^F)\otimes A$ is indecomposable.
\end{proof}
\begin{rem}
\label{Charles}
We continue to assume that the degree of normal closure of $L/F$ is prime to $p$.
    From the above proof, we get that under the assumptions of Theorem~\ref{Aupper.thm}\,(b), we have
    \begin{equation}
        \label{Charles.eq}
    U_A(Y)\simeq U(Y^F)\otimes A
    \end{equation}
    for any indecomposable summand $A$ in $M(L)^F$, where $U_A(Y)$ is as in Notation~\ref{A-upper.not}.
    Moreover, for any field extension $K/F$ such that $Y_K$ is irreducible and $A_K$ is indecomposable, we claim that the $K$-motive $\bigl(U_A(Y)\bigr)_K$ contains a summand isomorphic to $A_K$ if and only if $\bigl(U(Y^F)\bigr)_K$ contains a Tate motive $\F$. One implication follows immediately from~\eqref{Charles.eq}; to prove the converse, note that the indecomposable summands of $\bigl(U_A(Y)\bigr)_K$ are isomorphic to $P\otimes A_K$, where $P$ describes indecomposable summands of $U(Y^F)_K$. Comparing the ranks, if such a summand $P\otimes A_K$ is isomorphic to $A_K$, then $P$ has rank $1$.
    By~\cite[Lemma   2.21]{upper}, it follows that $(Y^F)_K$ has a $0$-cycle of degree $1\in\F$. So its upper motive, which is a summand of $U(Y^F)_K$, contains a summand isomorphic to $\F$ and this proves the claim.
\end{rem}

\section{Motivic decompositions}
\label{Decomposition}
Let $G$ be a reductive group defined over $F$. If $G$ is of inner type, indecomposable summands of a complete motivic decomposition of any projective $G$-homogeneous $F$-variety are described in~\cite[Theorem 3.5]{upper}. A generalization of this result dealing with groups of $p$-inner type, that is groups which become of inner type over a $p$-power field extension of $F$, is given in~\cite[Theorem 1.1]{outer}. In both cases, the description is in terms of upper motives of some projective homogeneous varieties.
The main result of this section is Theorem~\ref{main}, which provides another generalization of~\cite[Theorem 3.5]{upper} in a different direction. It uses the notion of A-upper motives of $G$, defined as follows:
\begin{dfn}
\label{AupperG}
Let $G$ be a reductive group over $F$ and let $E/F$ be a minimal field extension such that $G_E$ is of inner type. An \textit{A-upper motive of $G$} is an $F$-motive isomorphic to an A-upper motive of $Y^F$ for some projective $G_L$-homogeneous variety $Y$, defined over an intermediate field $L$ of $E/F$.
\end{dfn}
\begin{rem}
Note that for a given $G$, the field extension $E/F$ in Definition \ref{AupperG} is uniquely determined up to $F$-isomorphism so that its choice does not influence the notion of A-upper motives of $G$.
\end{rem}
The group $G$ is called $p'$-inner if the degree of $E/F$ is prime to $p$.

\begin{thm}
\label{main}
Let $G$ be a $p'$-inner reductive group defined over $F$.
Every summand in the complete decomposition of the Chow motive with coefficients in $\F=\Z/p\Z$ of any projective $G$-homogeneous variety $X$ is a Tate shift of an A-upper motive of $G$.
\end{thm}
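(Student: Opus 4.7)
The plan is to adapt the strategy of \cite[Theorem 3.5]{upper} and its $p$-inner extension \cite[Theorem 1.1]{outer}, using the A-upper machinery of Section~\ref{Aupper.section} to descend from $E$ to $F$. By Krull--Schmidt, it suffices to treat the case of a connected $X$; then $X = Y^F$ where $Y$ is geometrically connected over its field of constants $L$. Because $G_E$ is of inner type, $\Gamma_E$ acts trivially on the Dynkin diagram of $G$ and hence fixes every connected component of $\bar X$, which forces $\Gamma_E \subseteq \Gamma_L$ and so $L \subseteq E$; in particular $[L:F]$ divides $[E:F]$ and is prime to $p$, and the variety $Y$ is projective $G_L$-homogeneous.

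Let $N$ be an indecomposable summand of $M(X) = M(Y^F)$. Over $E$, the group $G_E$ is inner, so \cite[Theorem 3.5]{upper} applies to the connected components of $X_E$ and gives that $N_E$ is a direct sum of Tate shifts $U(Z_k)(i_k)$ of upper motives of projective $G_E$-homogeneous varieties $Z_k$. After shifting $N$ by the Tate twist $-j$ chosen so that $\Ch^0(\bar N) \neq 0$, at least one $Z_k$ appears without Tate twist in $N_E$; call it $Z$. The goal is then to exhibit this shifted $N$ as an A-upper motive of $G$, i.e.\ as an A-upper summand of $M((Y')^F)$ for some projective $G_{L'}$-homogeneous $L'$-variety $Y'$ with $L' \subseteq E$.

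To build $Y'$, consider the $\Gamma_F$-orbit of the subset of the Dynkin diagram defining $Z$: its stabilizer is $\Gamma_{L'}$ for an intermediate field $L' \subseteq E$, and the corresponding $\Gamma_{L'}$-stable subset defines a projective $G_{L'}$-homogeneous variety $Y'$ with $Y'_E = Z$. The corestriction $\hat Y = (Y')^F$ is a projective $G$-homogeneous $F$-variety, and Remark~\ref{cores.rem} gives $\hat Y_{L'} \sim Y'$ modulo $p$. Hence the hypothesis of Theorem~\ref{Aupper.thm}(b) is satisfied, yielding the decomposition $U((Y')^F) \simeq \Oplus_A U_A(Y')$ indexed by indecomposable summands $A$ of $M(L')^F$. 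The final matching step identifies $N$ with the appropriate $U_A(Y')$: since $N_E$ contains $U(Z) = U(Y'_E)$, Remark~\ref{Charles} together with the descent of multiplicity-one correspondences via Remark~\ref{cores.rem} (using $[E:F]$ prime to $p$) force $N \simeq U_A(Y')$ for the unique $A$ whose restriction to $E$ matches the structure of $N_E$.

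The hardest part will be this final matching: one must execute a careful descent aligning the inner-case decomposition of $N_E$ with the A-upper decomposition of $U((Y')^F)$ produced by Theorem~\ref{Aupper.thm}(b), and verify that no indecomposable summand of $M(X)$ is missed. The prime-to-$p$ hypothesis on $[E:F]$ is essential in two ways: it allows Theorem~\ref{Aupper.thm}(b) to apply (in its non-Galois form), and it underpins the descent of correspondences via Remark~\ref{cores.rem}.
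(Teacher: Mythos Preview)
Your proposal has genuine gaps and diverges from the paper's argument in an essential way.

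First, some setup issues. A projective $G$-homogeneous $F$-variety $X$ is already geometrically integral, so its field of constants is $F$ and your opening reduction to $X=Y^F$ with $L\subset E$ is vacuous (one simply has $L=F$). More seriously, you assert that $\hat Y=(Y')^F$ is a projective $G$-homogeneous $F$-variety; it is not. The corestriction $(Y')^F$ has field of constants $L'$, hence is not geometrically connected over $F$: it is only quasi-homogeneous in the sense of \cite{outer}. Thus you have not verified the hypothesis of Theorem~\ref{Aupper.thm}(b). In fact that hypothesis---the existence of a projective homogeneous $F$-variety $\hat Y$ with $\hat Y_{L'}$ equivalent to $Y'$---is essentially the $p$-consistency condition of Definition~\ref{p-consistent}, which Theorem~\ref{main} does \emph{not} assume. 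So your route, which passes through Theorem~\ref{Aupper.thm}(b) and Remark~\ref{Charles}, cannot prove the theorem in its stated generality.

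Second, even granting that hypothesis, the ``final matching step'' is where all the work lies, and you have not carried it out. Knowing that $N_E$ contains a copy of $U(Z)$ gives morphisms over $E$; extracting from this an isomorphism $N\simeq U_A(Y')$ over $F$ requires constructing correspondences over $F$, and neither Remark~\ref{Charles} nor Remark~\ref{cores.rem} supplies them. The paper's argument is different in kind: it does not extend scalars to $E$ at all. Instead it proceeds by induction on $\dim X$, passes to the function field $\tilde F=F(X)$, and uses \cite[Theorem~4.2]{MR2178658} to decompose $M(X_{\tilde F})$ into shifts of motives of smaller homogeneous varieties for the anisotropic kernel $G'$. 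For an indecomposable summand $M$ of $M(X)$ one selects a minimal-shift piece $N'\{i\}$ of $M_{\tilde F}$, reads off the relevant $Y=Y_{G_L,\tau}$, and then builds explicit correspondences $\alpha\colon U(Y)^F\{i\}\to M$ and $\beta\colon M\to U(Y)^F\{i\}$ over $F$ by lifting along the surjection $\Ch(Y^F\times X)\to\Ch(X_{F(Y^F)})$ and a diagonal pullback. A power of $\alpha\circ\beta$ is a projector, shown to be nonzero using only Theorem~\ref{Aupper.thm}(a); part~(b) is never invoked, and no $p$-consistency is needed.
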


\begin{proof}
Since the center of $G$ acts on $X$ trivially, we may assume that $G$ is semisimple and adjoint.

We write $\D_G$ (or simply $D$) for the set of vertices of the Dynkin diagram of $G$, and let $E/F$ be a minimal field extension with inner $G_E$.
The field extension $E/F$ is Galois and its
Galois group $\Gamma=\Gal(E/F)$ acts on $\D$.
For a field $L$ with $F\subset L\subset E$,
the set $\D_{G_L}$ is identified with $\D=\D_G$.
Any $\Gal(E/L)$-stable subset $\tau$ in $\D$ determines a projective $G_L$-homogeneous
variety $Y_{G_L,\tau}$ the way described in \cite[\S3]{upper} (which is opposite to the original convention of \cite[\S1.6]{MR0224710}).
For instance, $Y_{G_L,\D}$ is the variety of Borel subgroups of $G_L$, and $Y_{G_L,\emptyset}=\Spec L$.
Any projective $G_L$-homogeneous variety is isomorphic to $Y_{G_L,\tau}$ for some
$\Gal(E/L)$-stable $\tau\subset \D$.

We prove  Theorem \ref{main}
simultaneously for all $F,G,X$ using induction on $n:=\dim X$.
The base of the induction is $n=0$ where $X=\Spec F$ and the statement is trivial.

From now on we are assuming that $n\geq1$ and that Theorem
\ref{main} is already proven for varieties of dimension $<n$.

For any field extension $L/F$, we write $\tilde{L}$ for the function field $L(X)$
(note that any projective homogeneous variety and, in particular $X$, is geometrically integral).
Let $G'$ be the semisimple group over the field $\tilde{F}=F(X)$ given by the semisimple anisotropic kernel of the group $G_{\tilde{F}}$.
We note that the group $G'_{\tilde{E}}$ is of inner type.
By Lemma \ref{FX}, the field extension $\tilde{E}/\tilde{F}$ is Galois with Galois group
$$
\Gamma=\Gal(\tilde{E}/\tilde{F})=\Gal(E/F).
$$
In particular,
any of its intermediate fields is of the form $\tilde{L}$ for some intermediate field
$L$ of the extension $E/F$.
The set $\D_{G'}$ is identified with a $\Gamma$-invariant subset in $\D_G$;
the complement $\D_G\setminus\D_{G'}$ contains the subset in $\D_G$ corresponding to $X$.

Let $M$ be an indecomposable summand of the motive of $X$.
We are going to show that $M$ is isomorphic to a shift of a direct summand in $U(Y_{G_L,\tau})^F$
for some intermediate field $L$ of $E/F$ and some $\Gal(E/L)$-stable subset
$\tau\subset \D_G$ containing the complement of $\D_{G'}$.
This will prove Theorem \ref{main}.

According to \cite[Theorem 4.2]{MR2178658} (an enhancement of
\cite[Theorem 7.5]{MR2110630}),
the motive of $X_{\tilde{F}}$ decomposes into a sum of shifts of motives
of projective $G'_{\tilde{L}}$-homogeneous varieties $Y$,
satisfying $\dim Y<\dim X=n$, where $L$ runs over intermediate fields of
the extension $E/F$ by Lemma~\ref{AFX}.
It follows by the induction hypothesis
that each summand of the complete motivic decomposition of $X_{\tilde{F}}$ is a shift $N'\{i\}$ of
a summand $N'$ in $U(Y')^{\tilde{F}}$ for some $L/F\subset E/F$, some
$\Gal(E/L)$-stable $\tau'\subset\D_{G'}$,
and $Y':=Y_{G'_{\tilde{L},\tau'}}$.
By the Krull-Schmidt property \cite[Corollary 2.2]{outer},
the summands of the complete decomposition of $M_{\tilde{F}}$ are also of this shape.

In the complete decomposition of $M_{\tilde{F}}$,
let us choose a summand $N'\{i\}$ with minimal $i$.
It corresponds to a subset $\tau'\subset\D_{G'}$, and we set $\tau:=\tau'\cup(\D_G\setminus D_{G'})\subset\D_G$. The subset $\tau$ is $\Gal(E/L)$-stable.
To prove Theorem \ref{main},
it is enough to show that  $M$ is isomorphic to a direct summand in  $U(Y)^F\{i\}$
for these $L$, $\tau$, $i$, and $Y:=Y_{G_L,\tau}$.

Since $M$ is indecomposable,
it suffices to construct morphisms
\begin{equation}
\label{alpha-beta}
\alpha:U(Y)^F\{i\}\to M\;\;\text{and}\;\;
\beta:M\to U(Y)^F\{i\}
\end{equation}
such that no power of the composition $\alpha\compose\beta$ vanishes.
(We recall that by \cite[Corollary 2.2]{upper}, an appropriate power of any endomorphism of $M$
is a projector.)

We first construct certain, defined over the field $\tilde{F}$, predecessors
$\tilde{\alpha}$ and $\tilde{\beta}$ of $\alpha$ and $\beta$.
Recall that $N'\{i\}$ is a summand in $M_{\tilde{F}}$ and in $U(Y')^{\tilde{F}}\{i\}$.
Since $U(Y')=U(Y_{\tilde L})$ by~\cite[Cor. 2.15]{upper}, $U(Y')$ is a summand in $U(Y)_{\tilde L}$. Therefore, $U(Y')^{\tilde F}$ is a summand in
$$
(U(Y)_{\tilde L})^{\tilde F} = (U(Y)^F)_{\tilde F}.
$$
Using projections to and inclusions of direct summands, we define $\tilde{\alpha}$ and $\tilde{\beta}$ as the compositions
\begin{multline*}
\tilde{\alpha}\colon
U(Y)^F\{i\}_{\tilde{F}} \onto U(Y')^{\tilde{F}}\{i\}\onto N'\{i\}\hookrightarrow M_{\tilde{F}}
\;\;\text{ and }\;\;\\
\tilde{\beta}\colon M_{\tilde{F}}\onto N'\{i\}\hookrightarrow U(Y')^{\tilde{F}}\{i\}\hookrightarrow U(Y)^F\{i\}_{\tilde{F}},
\end{multline*}
where $\onto$ is a sign for a projection onto a direct summand
and $\hookrightarrow$ means an inclusion of a direct summand.
The composition $\tilde{\alpha}\compose\tilde{\beta}$ is the projector which yields the summand
$N'\{i\}$ of $M_{\tilde{F}}$.

Now we construct $\alpha$ and $\beta$ starting with $\alpha$.
Note that $\tilde{\alpha}$ is an element of the Chow group $\Ch(Y^F\times X)_{\tilde{F}}$ over $\tilde{F}$.
We take for $\alpha$ an element of the Chow group $\Ch(Y^F\times X)$ over $F$ such that
its image under the surjective ring homomorphism
$$
\Ch(Y^F\times X)\to\Ch(X_{F(Y^F)})
$$
(from \cite[Corollary 57.11]{EKM})
followed by the change of field homomorphism for the field extension $\tilde{F}(Y^F)/F(Y^F)$,
coincides with the image of $\tilde{\alpha}$ under the surjective ring homomorphism $$\Ch(Y^F\times X)_{\tilde{F}}\to\Ch(X_{\tilde{F}(Y^F)}).$$
Such $\alpha$ exists because the field extension $\tilde{F}(Y^F)/F(Y^F)$ is purely transcendental and therefore the
change of field homomorphism $\Ch(X_{F(Y^F)})\to\Ch(X_{\tilde{F}(Y^F)})$ is surjective as follows from the homotopy invariance of Chow groups
(see \cite[Theorem 57.13]{EKM} or \cite[Corollary 52.11]{EKM}) and \cite[Corollary 57.11]{EKM}).

In order to define $\beta$, we note that $\tilde{\beta}$ is an element of $\Ch(X\times Y^F)_{\tilde{F}}$ and let $\beta'$
be an element of $\Ch(X\times X\times Y^F)$ mapped to $\tilde{\beta}$ under the surjection (from \cite[Corollary 57.11]{EKM})
$$
\Ch(X\times X\times Y^F)\to\Ch(X\times Y^F)_{\tilde{F}}
$$
given by the generic point of the {\em second} factor in the product $X\times X\times Y^F$.
We consider $\beta'$ as a correspondence $X\corr X\times Y^F$ and let $\beta''$ be the composition of correspondences $\beta'\compose\mu$,
where $\mu\in\Ch(X\times X)$ is the projector which yields the motivic summand $M$ of $X$.
Finally, we define $\beta$ as the pullback of $\beta''$ with respect to the closed embedding
$$
X\times Y^F\hookrightarrow X\times X\times Y^F,\;\;(x,y)\mapsto(x,x,y)
$$
given by the diagonal of $X$.

Composing with the relevant idempotents, the elements $\alpha$ and $\beta$ we constructed induce morphisms as in~(\ref{alpha-beta}).
Changing notation, we write below $\alpha$ and $\beta$ for these two morphisms.
In particular, the composition $\alpha\compose\beta$ is an endomorphism of the motive $M$.
By \cite[Corollary 2.2]{upper},
an appropriate power $(\alpha\compose\beta)^{\compose r}$ of  this endomorphism is a projector
which defines a summand in $M$.
The morphisms $\beta$ and $\alpha\compose(\beta\compose\alpha)^{\compose (r-1)}$
identify this summand with a summand in $U(Y)^F\{i\}$ which we write in the form
$N\{i\}$ for certain summand $N$ in $U(Y)^F$.
By indecomposability of $M$, it suffices
to check that $N\ne0$ to conclude the proof.

Since $N'\ne0$, we have $\Fm(N')\ne0$ by Theorem \ref{Aupper.thm}(a).
In other terms, $\Ch^0(N'_{\check{F}})\ne0$, where $\check{F}$ is a separable closure of the field $\tilde{F}$.
So, the composition $\tilde{\alpha}\compose\tilde{\beta}$ yields a nonzero projector on $\Ch^0(Y^F)_{\check{F}}$.
By the construction of $\alpha$ and $\beta$, the action of the composition $\alpha\compose\beta$ on $\Ch^0(Y^F)_{\check{F}}$
coincides with the action of $\tilde{\alpha}\compose\tilde{\beta}$
(cf.\! \cite[Proof of Theorem 3.5]{upper}) and therefore also yields a nonzero projector.
Consequently, $\Fm(N)\ne0$ and it follows that $N$ is nonzero.
\end{proof}

\begin{rem}
Instead of \cite[Theorem 4.2]{MR2178658}, the weaker result
\cite[Theorem 7.5]{MR2110630} can be used in the proof of Theorem \ref{main}.
To do so, it suffices to take for $G'$ the semisimple part of the parabolic subgroup defining $X_{\tilde{F}}$.
\end{rem}

\begin{rem}
As follows from the proof of Theorem \ref{main},
the A-upper motives of $G$, whose Tate shifts actually appear as direct summands of $M(X)$ in Theorem \ref{main},
are associated with varieties $Y$ with $Y^F$ {\em dominating} $X$ in the sense of \cite[\S 2]{motequiv} (see also \cite[Lemma 2.2]{TateTraces}).
\end{rem}

\section{Classification results}

By~\cite[Thm. 4.3]{TateTraces}, using the Tate trace defined as a pure Tate summand of maximal rank of a motive, one may classify motives in a broad subcategory of the category of Chow motives $\CM(F,\F)$, with as usual $\F=\Z/p\Z$. This subcategory contains motives of projective homogeneous varieties under the action of a reductive group of $p$-inner type (i.e. of inner type over a $p$-power extension of $F$). On the other hand, this theorem does not apply when the reductive group is not $p$-inner, see Remark~\ref{counterexample} for an explicit counter-example using Artin motives.

The purpose of this section is to obtain a similar classification result in a different subcategory of $\CM(F,\F)$, which contains projective homogeneous varieties under the action of some reductive groups which are $p'$-inner, see~\S\ref{Decomposition} for a definition. A precise statement is given in Theorem~\ref{isomcrit} below.  To achieve this, we need to replace the Tate trace by the Artin-Tate trace of a motive, defined as the part of a complete decomposition of this motive which consists of Artin-Tate motives, i.e. shifts of Artin motives.

We first establish a criterion of isomorphism for A-upper motives of some reductive groups, a crucial tool in the proof of Theorem~\ref{isomcrit}. This can be done using Theorem~\ref{Aupper.thm}(b), by slightly restricting the class of reductive groups we are working with.
\begin{dfn}
\label{p-consistent}
    Let $G$ be a reductive group, and let $E/F$ be a minimal Galois field extension over which $G$ is of inner type.
The group $G$ is called {\em $p$-consistent} if for any intermediate field $L$ in $E/F$ and any projective homogeneous $G_L$-variety $Y$ over $L$, there exists a $G$-projective homogeneous variety $\hat Y$ over $F$ such that the $L$ varieties $\hat Y_L$ and $Y$ are equivalent (mod $p$).
\end{dfn}

In particular, if $G$ is $p'$-inner and $p$-consistent, the hypotheses of Theorem~\ref{Aupper.thm}(b) apply to varieties $Y$ as in the definition, so we may use the notation $U_A(Y)$ introduced in~\ref{A-upper.not} for its A-upper motives, where $A$ runs through indecomposable summands of the Artin motive $M(L)^F$.

Any non-$p$-inner absolutely simple group of type different from $^3\!\cat{D}_4$ and $^6\!\cat{D}_4$ is both $p'$-inner and $p$-consistent (for instance, the $3$-consistency of $\cat{E}_6$ follows from \cite{titspindexes}).
A direct product of $p'$-inner $p$-consistent groups is $p'$-inner and $p$-consistent.
Here is an additional source of $p'$-inner $p$-consistent groups:

\begin{example}\label{pexample}
Let $L/F$ be a {\em $p'$-extension}, i.e., a
finite separable field extension such that the degree of its normal closure is prime to $p$.
Given an inner reductive group $H$ over $F$, the group $G:=R_{L/F}(H_L)$, where $R_{L/F}$ is the Weil transfer,
is $p'$-inner and $p$-consistent.
\end{example}
The following theorem applies to A-upper motives (as defined in~\ref{Aupper.def} and~\ref{AupperG}) of all the groups listed above, and
extends~\cite[Corollary 2.15]{upper}.
\begin{thm}
\label{crit}
Let $U_A(Y)$ (respectively $U_{A'}(Y')$) be an A-upper motive of a $p'$-inner $p$-consistent reductive group $G$ (respectively $G'$) defined over $F$.
The motives $U_A(Y)$ and $U_{A'}(Y')$ are isomorphic in $\CM(F,\F)$ if and only if the Artin motives $A$ and $A'$ are isomorphic and the varieties $Y^F$ and $Y'^F$ are equivalent mod $p$.
\end{thm}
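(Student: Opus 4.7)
The plan is to bridge A-upper motives with their Artin and geometric content via the formula from Remark~\ref{Charles},
\[
U_A(Y)\simeq U(Y^F)\otimes A\;\;\text{and}\;\;U_{A'}(Y')\simeq U(Y'^F)\otimes A',
\]
which applies to both $Y$ and $Y'$ since $G$ and $G'$ are $p'$-inner and $p$-consistent (so the assumptions of Theorem~\ref{Aupper.thm}(b) are in force and the degrees of the relevant field extensions are prime to $p$). The forward implication is then immediate: assuming $A\simeq A'$ and $Y^F$ equivalent to $Y'^F$ mod $p$, I would invoke~\cite[Corollary 2.15]{upper} to upgrade the variety equivalence to $U(Y^F)\simeq U(Y'^F)$, and tensor with isomorphic Artin motives to obtain $U_A(Y)\simeq U_{A'}(Y')$.

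For the converse, I would first apply the functor $\Fm$ from Section~\ref{Fm.section} to an isomorphism $U_A(Y)\simeq U_{A'}(Y')$; since Theorem~\ref{Aupper.thm}(b) and Notation~\ref{A-upper.not} give $\Fm(U_A(Y))=A$ and $\Fm(U_{A'}(Y'))=A'$, the isomorphism $A\simeq A'$ drops out. The entire content of the converse is then to cancel $A$ from the resulting isomorphism $U(Y^F)\otimes A\simeq U(Y'^F)\otimes A$ to conclude $U(Y^F)\simeq U(Y'^F)$, after which~\cite[Corollary 2.15]{upper} yields the desired equivalence of $Y^F$ and $Y'^F$. To carry out this cancellation, I would select $\hat Y$ and $\hat Y'$ over $F$ from $p$-consistency of $G$ and $G'$, with $\hat Y_L$ equivalent to $Y$ and $\hat Y'_{L'}$ equivalent to $Y'$; Remark~\ref{cores.rem} together with~\cite[Corollary 2.15]{upper} then give $U(\hat Y)\simeq U(Y^F)$ and $U(\hat Y')\simeq U(Y'^F)$. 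Let $E/F$ be a minimal Galois field extension containing both $L$ and $L'$: its degree is prime to $p$, being a compositum of two such Galois extensions. Over $E$, the Artin motive $A_E$ splits as a direct sum of $r:=\rk(A)$ copies of the Tate motive $\F$ (since $L\otimes_F E$ is a split \'etale $E$-algebra), while Lemma~\ref{indec.lem} ensures that $U(\hat Y)_E\simeq U(\hat Y_E)$ and $U(\hat Y')_E\simeq U(\hat Y'_E)$ are both indecomposable. Base-changing the tensored isomorphism to $E$ therefore yields $U(\hat Y_E)^{\oplus r}\simeq U(\hat Y'_E)^{\oplus r}$, and Krull--Schmidt cancels the repetitions, giving $U(\hat Y_E)\simeq U(\hat Y'_E)$ and hence $\hat Y_E$ equivalent to $\hat Y'_E$ over $E$. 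Two applications of Remark~\ref{cores.rem} descend this equivalence to $\hat Y$ equivalent to $\hat Y'$ over $F$; transitivity then closes the chain $Y^F\sim\hat Y\sim\hat Y'\sim Y'^F$.

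The hard part will be the cancellation step over $E$. It relies on two complementary features being available at once: the complete splitting of $A_E$ into Tate summands, and the indecomposability over $E$ of $U(\hat Y_E)$ and $U(\hat Y'_E)$ supplied by Lemma~\ref{indec.lem}. Only with both in hand does Krull--Schmidt become applicable for the cancellation. The subsequent descent from $E$ to $F$ is then a routine transfer argument encoded in Remark~\ref{cores.rem} for prime-to-$p$ field extensions.
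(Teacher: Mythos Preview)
Your proof is correct and the forward direction together with the extraction of $A\simeq A'$ via $\Fm$ matches the paper exactly. The converse, however, takes a genuinely different route. The paper argues geometrically: it base-changes to the function field $F(\hat Y)$, where $U(Y^F)$ acquires a Tate summand, so that $U_A(Y)_{F(\hat Y)}$ contains $A_{F(\hat Y)}$; transporting this through the assumed isomorphism and invoking the biconditional in Remark~\ref{Charles} forces a Tate summand in $U(Y'^F)_{F(\hat Y)}$, hence isotropy of $(Y'^F)_{F(\hat Y)}$ and domination $Y^F\to Y'^F$, with the reverse domination obtained by symmetry. You instead pass to a prime-to-$p$ Galois extension $E$ that simultaneously splits $A$ into Tate motives and, via Lemma~\ref{indec.lem}, keeps $U(\hat Y)_E$ and $U(\hat Y')_E$ indecomposable; Krull--Schmidt then cancels the $\rk(A)$-fold repetition in one stroke, and Remark~\ref{cores.rem} handles the descent. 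Your approach avoids the subtler ``if and only if'' clause of Remark~\ref{Charles} and delivers equivalence (rather than two separate dominations) directly, at the cost of invoking Lemma~\ref{indec.lem} and juggling the auxiliary field $E$. One cosmetic point: your justification ``being a compositum of two such Galois extensions'' is slightly loose, since $L$ and $L'$ need not themselves be Galois; what you mean is that $E$ sits inside the compositum of the Galois closures of $L$ and $L'$, each of prime-to-$p$ degree because $G$ and $G'$ are $p'$-inner.
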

\begin{proof}
Let $E/F$ (respectively $E'/F$) be a minimal Galois extension of $F$ over which $G$ (respectively $G'$) is of inner type. By assumption, $Y$ is a projective $G_L$-homogeneous variety over $F$ for some intermediate field $L$ in $E/F$, $A$ is an indecomposable summand in $M(L)^F$ and $U_A(Y)$ is an indecomposable summand in $U(Y)^F$ with $\Fm\bigl(U_A(Y)\bigr)=A$. And similarly for $G'$, $Y'$, $A'$ and some intermediate field $L'$ in $E'/F$, with $\Fm\bigl(U_{A'}(Y')\bigr)=A'$. Applying the functor $\Fm$ to an isomorphism $U_A(Y)\simeq U_{A'}(Y')$ produces an isomorphism $A\simeq A'$. So we may assume $A$ and $A'$ are isomorphic.

By~\eqref{Charles.eq}, we have $U_A(Y)\simeq U(Y^F)\otimes A$ and $U_{A'}(Y')\simeq U(Y'^F)\otimes A'$.
Hence, if the varieties $Y^F$ and $Y'^F$ are equivalent, so that $U(Y^F)\simeq U(Y'^F)$ (see~\cite[Cor. 2.15]{upper}), we get $U_A(Y)\simeq U_{A'}(Y')$ as expected.

Assume conversely that $U_A(Y)\simeq U_{A'}(Y')$.
Since $G$ is $p$-consistent, there exists a $G$-projective homogeneous variety $\hat Y$ over $F$ such that $Y^F$ and $\hat Y$ are equivalent.
The Artin motive $A_{F(\hat Y)}\simeq A'_{F(\hat Y)}$ is a direct summand in $U_A(Y)_{F(\hat Y)}$, hence also in $U_{A'}(Y')_{F(\hat Y)}$.
By Remark~\ref{Charles},
this implies that the Tate motive $\F$ is a direct summand in $U(Y'^F)_{F(\hat Y)}$.
So, the variety $(Y'^F)_{F(\hat Y)}$ is isotropic (i.e., has a $0$-cycle of degree $1\in\F$), which means that
$Y^F$ dominates $\hat Y$ and $Y'^F$.
The same argument shows that $Y'^F$ dominates $Y^F$ and we conclude that they are equivalent.
\end{proof}

We write $R_{L/F}(Y)$ and $R_{L'/F}(Y')$ for the $F$-varieties given by the Weil transfers of the $L$-variety $Y$ and the $L'$-variety $Y'$ respectively.
Since the groups $G$ and $G'$ are $p'$-inner and $p$-consistent, the conditions of Lemma~\ref{R} apply to $Y$ and $Y'$. Therefore,
under the conditions of Theorem~\ref{crit}, and using the notations introduced in the proof, we get:
\begin{cor}
\label{critR}
The motives $U_A(Y)$ and $U_{A'}(Y')$ are isomorphic if and  only if
the Artin motives $A$ and $A'$ are isomorphic and the Weil transfers $R_{L/F}(Y)$ and  $R_{L'/F}(Y')$ are equivalent.
\qed
\end{cor}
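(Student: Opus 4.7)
The plan is to combine Theorem~\ref{crit} with Lemma~\ref{R}, observing that the hypotheses required by Lemma~\ref{R} are guaranteed by the $p'$-inner and $p$-consistent assumptions on $G$ and $G'$. By Theorem~\ref{crit}, the motives $U_A(Y)$ and $U_{A'}(Y')$ are isomorphic if and only if $A\simeq A'$ and the varieties $Y^F$ and $Y'^F$ are equivalent mod $p$. So, assuming the isomorphism $A\simeq A'$ is given (or conversely, needs to be produced), it suffices to show that $Y^F$ is equivalent to $Y'^F$ precisely when $R_{L/F}(Y)$ is equivalent to $R_{L'/F}(Y')$, which I would establish by proving separately the equivalences $Y^F\sim R_{L/F}(Y)$ and $Y'^F\sim R_{L'/F}(Y')$ and then invoking transitivity.

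The second step is to verify the hypotheses of Lemma~\ref{R} for the pair $(L,Y)$, and symmetrically for $(L',Y')$. Since $G$ is $p'$-inner, the minimal Galois extension $E/F$ over which $G$ becomes of inner type has degree prime to $p$; as $L$ is an intermediate subfield of the separable extension $E/F$, the degree $[L:F]$ divides $[E:F]$ and is therefore prime to $p$. Moreover, the $p$-consistency of $G$ produces a projective $G$-homogeneous $F$-variety $\hat Y$ such that $\hat Y_L$ is equivalent to $Y$ mod $p$. These are exactly the hypotheses of Lemma~\ref{R}, which then yields $Y^F\sim R_{L/F}(Y)$. The identical argument applied to $G'$, $E'$, $L'$, $Y'$ yields $Y'^F\sim R_{L'/F}(Y')$.

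Combining these two equivalences with transitivity, we obtain that $Y^F\sim Y'^F$ if and only if $R_{L/F}(Y)\sim R_{L'/F}(Y')$, and substituting this into the criterion furnished by Theorem~\ref{crit} gives the corollary. There is no real obstacle in this argument: all of the genuine content—the motivic lifting of Artin isomorphisms, the equivalence of $Y^F$ with the corresponding Weil transfer, and the translation between equivalence of varieties and isomorphism of their upper motives via \cite[Cor.~2.15]{upper}—has already been carried out in the proof of Theorem~\ref{crit} and in Lemma~\ref{R}. The corollary is essentially a rephrasing of Theorem~\ref{crit} in terms of Weil transfers, made possible by the fact that the $p'$-inner $p$-consistent hypothesis is tailored precisely to the setup of Lemma~\ref{R}.
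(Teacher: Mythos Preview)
Your proof is correct and follows exactly the paper's approach: the paper simply notes that the $p'$-inner $p$-consistent hypotheses ensure Lemma~\ref{R} applies to $Y$ and $Y'$, so that $Y^F\sim R_{L/F}(Y)$ and $Y'^F\sim R_{L'/F}(Y')$, and then the corollary is an immediate restatement of Theorem~\ref{crit}. Your write-up is in fact more explicit than the paper's one-line justification in spelling out why the hypotheses of Lemma~\ref{R} are met.
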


From now on, all the motives considered belong to the full additive subcategory of $\CM(F,\F)$ generated by direct summands of Tate shifts of motives of geometrically split
varieties satisfying the nilpotence principle. The Krull–Schmidt property holds for all objects in this category
by~\cite[Corollary 35]{MR2264459} and ~\cite[Corollary 2.6]{upper}
(see also~\cite[Proposition 2.1]{TateTraces}), so we may give the following definition:
\begin{dfn}
\label{ATTrace.dfn}
The \emph{Artin-Tate trace} of a motive $M$ is the part in a complete decomposition of $M$ which consists of Artin-Tate motives.
We say that two motives have the same \emph{higher Artin-Tate trace} if
over all field extensions of $F$
their Artin-Tate traces are isomorphic.
\end{dfn}
We get the following classification theorem, which extends~\cite[Theorem 4.3]{TateTraces}.
\begin{thm}
\label{isomcrit}
Let $M$ and $M'$ be $F$-motives. We assume that each summand in a complete decomposition of any of them is isomorphic to a Tate shift of an A-upper motive of a $p'$-inner and $p$-consistent reductive group. The motives $M$ and $M'$ are isomorphic if and only if they have the same higher Artin-Tate trace.
\end{thm}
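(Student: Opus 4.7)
The ``only if'' direction is immediate: any isomorphism $M\simeq M'$ restricts to $M_L\simeq M'_L$ for every field extension $L/F$ and preserves the Artin-Tate part in any complete Krull-Schmidt decomposition. For the ``if'' direction, the plan is to argue by induction on the total rank of $M$ over $\bar F$, the base case being trivial. Throughout I would use Krull-Schmidt in the ambient subcategory together with the identification $U_A(Y)\simeq U(Y^F)\otimes A$ of Remark~\ref{Charles} and the isomorphism criterion provided by Theorem~\ref{crit}.

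\emph{Step 1 (peel off the Artin-Tate part).} First, using Krull-Schmidt I would write $M=M^{\mathrm{AT}}\oplus M^0$ and $M'=M'^{\mathrm{AT}}\oplus M'^0$, where the first summand in each case is the Artin-Tate trace at $F$. The hypothesis applied at $F$ itself gives $M^{\mathrm{AT}}\simeq M'^{\mathrm{AT}}$. Since restrictions of Artin-Tate motives remain Artin-Tate, the higher Artin-Tate trace of $M^0$ coincides with that of $M'^0$, so I may replace $M$ by $M^0$ and $M'$ by $M'^0$ and assume that neither motive contains an Artin-Tate indecomposable summand; equivalently, for every summand $U_A(Y)\{n\}$ appearing, the upper motive $U(Y^F)$ is not Tate.

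\emph{Step 2 (maximal summand and matching over a function field).} Among the finitely many varieties $Y_i^F$ coming from indecomposable summands of $M\oplus M'$, I would choose $Y$ to be maximal in the mod-$p$ domination pre-order, and let $P=U_A(Y)\{n\}$ be a corresponding summand, which we may assume to sit inside $M$. Setting $K:=F(Y^F)$, the variety $(Y^F)_K$ has a $0$-cycle of degree $1 \bmod p$, so $U(Y^F)_K$ contains $\F$ as a direct summand; by Remark~\ref{Charles} the motive $P_K$ then contains $A_K\{n\}$ as an Artin-Tate direct summand. By hypothesis, $A_K\{n\}$ appears in the Artin-Tate trace of $M'_K$, hence inside the Artin-Tate part of some summand $(U_{A'}(Y')\{m\})_K$ of $M'_K$. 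Applying Remark~\ref{Charles} in the other direction forces $(Y'^F)_K$ to be isotropic, i.e.\ $Y^F$ to dominate $Y'^F$ mod $p$; together with the maximality of $Y^F$ and the absence of Artin-Tate summands, this pins down $Y'^F$ as equivalent to $Y^F$ mod $p$. Comparing the $A_K\{n\}$-contributions, lifting the isomorphism $A_K\simeq A'_K$ back to $A\simeq A'$ over $F$ via Corollary~\ref{AFX}, and comparing Tate shifts to obtain $n=m$, Theorem~\ref{crit} then yields a summand $Q$ of $M'$ with $Q\simeq P$.

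\emph{Step 3 (cancellation, induction, and main obstacle).} Cancelling $P$ from $M$ and $Q$ from $M'$ produces motives of strictly smaller rank whose summands are still Tate shifts of A-upper motives of $p'$-inner $p$-consistent reductive groups; by the argument of Step 1 applied at each field extension, they still share the same higher Artin-Tate trace. The inductive hypothesis supplies an isomorphism of the residuals, and hence $M\simeq M'$. The principal technical obstacle is the matching performed in Step 2: several indecomposable summands of $M'$ may a priori contribute Artin-Tate pieces to $M'_K$, and to isolate a single candidate summand up to isomorphism one must combine the Krull-Schmidt property of Artin-Tate motives, the maximality of $Y^F$ (which rules out contributions from strictly dominated varieties), Remark~\ref{Charles} (controlling exactly when an A-upper motive acquires an Artin direct summand after base change), and Corollary~\ref{AFX} (which recovers isomorphism of Artin summands over $F$ from isomorphism over the function field of a geometrically integral variety).
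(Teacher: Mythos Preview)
Your overall strategy (match an indecomposable summand via the Artin-Tate trace over a well-chosen function field, cancel, induct) is the same as the paper's, but Step~2 has a genuine gap in the direction of the extremal choice.

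You pick $Y$ \emph{maximal} for the domination pre-order and pass to $K=F(Y^F)$. From the fact that an indecomposable Artin-Tate piece $A_K\{n\}$ shows up in some $(U_{A'}(Y')\{m\})_K$, Remark~\ref{Charles} gives that $(Y'^F)_K$ is isotropic, i.e.\ $Y^F$ dominates $Y'^F$. But maximality of $Y^F$ only says that nothing \emph{strictly} dominates $Y^F$; it does \emph{not} force $Y'^F$ to dominate $Y^F$. So the claimed equivalence $Y^F\sim Y'^F$ is unjustified. The same problem hits the shift: from $A_K\{n\}$ sitting in $(U(Y'^F)_K\otimes A'_K)\{m\}$ you only get $m\le n$, not $m=n$, because $U(Y'^F)_K$ may contribute Tate summands in positive degrees. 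The paper fixes both issues by choosing the opposite extremal: first the \emph{minimal} Tate shift $n$, then among summands with that shift a variety whose Weil transfer is \emph{minimal} for domination. Working over the function field of that Weil transfer gives a $Y_\beta$ in $M'$ with $m_\beta=n$ and $R(X_\alpha)$ dominating $R(Y_\beta)$; a second pass, now over $F(R(Y_\beta))$, produces some $X_\gamma$ in $M$ with shift $n$ dominated by $R(Y_\beta)$, and minimality of $R(X_\alpha)$ among such closes the chain $R(X_\alpha)\succ R(Y_\beta)\succ R(X_\gamma)\succ R(X_\alpha)$ to an equivalence. Your single pass with a maximal choice cannot reproduce this bounce.

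A secondary point: taking $K=F(Y^F)$ is problematic because $Y^F$ is not geometrically integral when $L\ne F$ (indeed $F(Y^F)=L(Y)$ already contains $L$, so $A_K$ typically decomposes), and Corollary~\ref{AFX} requires a geometrically integral variety to lift isomorphisms of Artin summands back to $F$. The paper uses the function field of the Weil transfer $R_{L/F}(Y)$, which \emph{is} geometrically integral over $F$; alternatively one could use $F(\hat Y)$ for the $F$-variety $\hat Y$ supplied by $p$-consistency. Once you switch to minimal shift, minimal Weil transfer, and the function field of the Weil transfer, your outline becomes the paper's proof.
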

\begin{rem}
By Theorem~\ref{main}, this applies to direct sums of shifts of motives of projective homogeneous varieties under $p'$-inner and $p$-consistent reductive groups.
\end{rem}
\begin{proof}[Proof of Theorem \ref{isomcrit}]
If $M$ and $M'$ are isomorphic, then by the Krull-Schmidt property, they have the same higher Artin-Tate trace.

Conversely, assume that $M$ and $M'$ have the same higher Artin-Tate trace. We prove that $M$ and $M'$ are isomorphic by induction on the maximum of the number of summands in their respective complete motivic decompositions.
If this maximum is zero, both $M$ and $M'$ are trivial.
If it is nonzero, write
$$
M=U_{A_1}(X_1)\{n_1\}\oplus..\oplus U_{A_k}(X_k)\{n_k\}\mbox{  and  } M'=U_{B_1}(Y_1)\{m_1\}\oplus ...\oplus U_{B_s}(Y_s)\{m_s\},
$$
where $A_i$ and $B_j$ are Artin motives and $X_i$ and $Y_j$ are corestrictions of projective homogeneous varieties defined over some separable field extensions of $F$.
We may assume that $n=\min_{1\leq i\leq k} n_i$ is not higher than $m=\min_{1\leq j \leq s}m_j$.
Pick an integer $1\leq \alpha \leq k$ such that the Weil transfer $R(X_{\alpha})$ to $F$ of $X_\alpha$
is minimal for the domination relation among the $R(X_i)$'s such that $U_{A_i}(X_i)\{n\}$ is a direct summand in the above decomposition of $M$ (to lighten notation, we write here $R(\cdot)$ for Weil transfers, dismissing the associated finite separable extensions).

Over the function field of $R(X_\alpha)$ the motive $M$ contains as a summand the
Artin-Tate motive $A\{n\}$ with $A:=(A_\alpha)_{F(R(X_{\alpha}))}$.
By assumption on the higher Artin-Tate traces of $M$ and $M'$,
it follows that
the motive $M'$ over the same function field also contains $A\{n\}$.
Moreover, $A$ is indecomposable by Corollary~\ref{AFX}.
So $A\{n\}$ is a summand in $U_{B_\beta}(Y_{\beta})\{m_\beta\}_{F(R(X_\alpha))}$ for some $1\leq \beta\leq s$.
Hence, we have $m_\beta=m=n$ and $U(Y_\beta^F)_{F(R(X_\alpha))}$ contains a Tate summand $\F$, see Remark~\ref{Charles}.
The variety $R(X_{\alpha})$ dominates $R(Y_{\beta})$, and $(B_\beta)_{F(R(X_{\alpha}))}$ is isomorphic to $A$.
Applying again Corollary~\ref{AFX}, we get that the Artin motives $A_{\alpha}$ and $B_{\beta}$ are isomorphic.

The same argument over the function field of $R(Y_{\beta})$
yields
some $1\leq \gamma \leq k$ such that $R(X_{\gamma})$ is dominated by $R(Y_{\beta})$,
$A_\gamma\simeq A_\alpha$, and $n_\gamma=n$.

By minimality of $R(X_{\alpha})$, the varieties $R(X_{\alpha})$, $R(X_\gamma)$ and $R(Y_{\beta})$ are equivalent.
The A-upper motives $U_A(X_{\alpha})$ and $U_A(Y_{\beta})$ are then isomorphic by Corollary \ref{critR}.
Induction, applied to the complementary summands in $M$ and $M'$ of $U_A(X_{\alpha})\{n\}$ and $U_A(Y_{\beta})\{n\}$, proves that $M$ and $M'$ are isomorphic.
%
\end{proof}

\begin{cor}\label{pp'bridge}
The motives of two projective homogeneous varieties for two absolutely simple groups of type different from $^3\!\cat{D}_4$ and $^6\!\cat{D}_4$ are isomorphic if and only if they have the same higher Artin-Tate trace.
\end{cor}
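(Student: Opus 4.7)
The plan is to fix a prime $p$ (with $\F=\Z/p\Z$) and reduce the statement to the two classification results already available: Theorem~\ref{isomcrit} in the $p'$-inner setting, and \cite[Theorem 4.3]{TateTraces} in the $p$-inner setting. The ``only if'' direction is immediate from the Krull-Schmidt property, since isomorphic motives have identical complete decompositions and hence identical Artin-Tate traces over every field extension of $F$.

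For the ``if'' direction, the key preliminary observation is that for any absolutely simple group $G$ of type different from $^3\!\cat{D}_4$ and $^6\!\cat{D}_4$, the minimal Galois field extension over which $G$ becomes of inner type has degree $1$ or $2$, as the remaining outer types ($^2\!\cat{A}_n$, $^2\!\cat{D}_n$, $^2\!\cat{E}_6$) all have $\ast$-action of order $2$. It follows that for $p=2$ every such $G$ is $p$-inner, whereas for $p\geq 3$ every such $G$ is $p'$-inner (since $\gcd(2,p)=1$) and, by the discussion following Definition~\ref{p-consistent}, also $p$-consistent.

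In the case $p\geq 3$, both $G_1$ and $G_2$ are $p'$-inner and $p$-consistent; Theorem~\ref{main} ensures that the indecomposable summands of $M(X_1)$ and $M(X_2)$ are Tate shifts of A-upper motives of $G_1$ and $G_2$ respectively, and Theorem~\ref{isomcrit} then yields the conclusion directly. In the case $p=2$, both groups are $p$-inner, and \cite[Theorem 1.1]{outer} expresses the indecomposable summands of $M(X_1)$ and $M(X_2)$ as upper motives of projective homogeneous varieties. Such varieties are geometrically integral (orbits of connected reductive groups are geometrically connected), so an upper motive of a positive-dimensional such variety is not an Artin motive. The Artin-Tate summands of a complete decomposition of $M(X_i)$ are then precisely its Tate summands, so the higher Artin-Tate trace coincides with the higher Tate trace of \cite{TateTraces}, and \cite[Theorem 4.3]{TateTraces} delivers the isomorphism.

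The main technical point will be the reduction of the higher Artin-Tate trace to the higher Tate trace in the case $p=2$; the case-split on primes and types is itself a direct consequence of the structure of the $\ast$-action on Dynkin diagrams of absolutely simple groups not of type $^3\!\cat{D}_4$ or $^6\!\cat{D}_4$.
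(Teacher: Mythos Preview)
Your approach is correct and in fact more economical than the paper's. The key observation you make---that for a \emph{fixed} prime $p$, every absolutely simple group of type other than $^3\!\cat{D}_4$ or $^6\!\cat{D}_4$ is $2$-inner when $p=2$ and $p'$-inner $p$-consistent when $p$ is odd---means that the two groups $G$ and $G'$ always fall on the same side of the $p$-inner/$p'$-inner dichotomy. The paper's proof does not make this observation and therefore treats an additional ``mixed'' case (one group $p$-inner, the other $p'$-inner), using a separate argument via \eqref{Charles.eq} and Corollary~\ref{AFX} to show that all A-upper summands occurring in $M(Y)$ are in fact ordinary upper motives. Your case split renders that entire paragraph unnecessary.

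One point in your $p=2$ argument is overstated and not correctly justified. You assert that the varieties produced by \cite[Theorem 1.1]{outer} are geometrically integral; but for a genuinely outer $2$-inner group (say of type $^2\!\cat{A}_n$), that theorem yields upper motives of corestrictions $Y^F$ with $Y$ defined over the quadratic extension $E$, and $(Y^F)_{\bar F}$ has two connected components. So the claim ``Artin-Tate trace $=$ Tate trace'' is not established by your reasoning (and it is not obvious whether it is even true, since for $[E:F]=2$ and $p=2$ the Artin motive $M(E)^F$ is indecomposable of rank $2$). Fortunately you do not need this: the implication ``same higher Artin-Tate trace $\Rightarrow$ same higher Tate trace'' is immediate, because over any field the Tate trace is the pure-Tate part of the Artin-Tate trace. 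That weaker implication is all that \cite[Theorem 4.3]{TateTraces} requires, and it is exactly what the paper invokes in its both-$p$-inner case. With that one-line replacement, your proof is complete.
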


\begin{proof}
Let $G$, $G'$ be two absolutely simple groups of (possibly different) type not  $^3\!\cat{D}_4$, nor $^6\!\cat{D}_4$. Assume that $X$ is projective $G$-homogeneous and $Y$ is projective $G'$-homogeneous.

If the motives of $X$ and $Y$ have the same higher Artin-Tate trace, then they clearly share the same higher Tate trace as well. The case where both $G$ and $G'$ are $p$-inner then boils down to \cite[Theorem 4.3]{TateTraces}.

If $G$ and $G'$ are both $p'$-inner and $p$-consistent, Theorem \ref{main} asserts that both motives $M(X)$ and $M(Y)$ can be written as direct sums of Tate shifts of A-upper motives. We thus land in the conditions of Theorem \ref{isomcrit}.

In the remaining case, one of the group, say $G$, is $p$-inner, while the other, $G'$, is $p'$-inner and $p$-consistent. For any field extension $E/F$, the Artin-Tate trace of $M(Y_E)$ becomes pure Tate over any prime-to-$p$ field extension over which $G'$ is of inner type. Since the Tate trace of $M(X_E)$ is invariant over such an extension by \cite[Lemma 5.9]{TateTraces}, it follows that if the higher Artin-Tate traces of $M(X)$ and $M(Y)$ are isomorphic, they actually correspond to the higher Tate traces of $M(X)$ and $M(Y)$. Assume an A-upper motive $U_A(Z)\{i\}\simeq U(\hat{Z})\otimes A\{i\}$ is a direct summand of $M(Y)$. Then $M(Y_{F(\hat{Z})})$ contains an indecomposable direct summand isomorphic to $A_{F(\hat{Z})}$ (see ~\eqref{Charles.eq} and Corollary \ref{AFX}) which must be a Tate motive by the previous discussion. It follows that $A$ is itself a Tate motive, and $M(Y)$ can be written as a direct sum of Tate shifts of \emph{upper motives}. The motive $M(Y)$ thus fulfills the conditions of \cite[Theorem 4.3]{TateTraces}, as well as $M(X)$.
\end{proof}

\begin{rem}
    Corollary \ref{pp'bridge} is stated for motives of projective homogeneous varieties, but holds more generally for their arbitrary direct summands (with the same proof).
\end{rem}
\section{Motivic equivalence for reductive groups}
\label{Motivic equivalence}

Motivic equivalence for algebraic groups has been introduced by the first author in~\cite{motequiv}.
Roughly speaking two inner reductive groups with the same Dynkin diagram are called motivic equivalent if their respective projective homogeneous varieties of any given type have isomorphic motives. This notion can be extended to non-inner reductive groups using corestriction of motives.
The main result of this section is Corollary~\ref{moteq.cor}, which provides a criterion of motivic equivalence for $p'$-inner $p$-consistent reductive algebraic groups which are inner forms of each other, in terms of their higher Tits $p$-indexes.
Combining this with a similar result for $p$-inner reductive groups proved in~\cite{TateTraces}, we get that the criterion actually holds for all absolutely simple algebraic groups of type other than $^3\!\cat{D}_4$ and $^6\!\cat{D}_4$.

We start with a proposition which provides conditions under which we may extend scalars to a function field to detect isomorphim for some A-upper motives.
\begin{prop}
\label{free}
Let $X$ be a projective homogeneous $F$-variety.
 Consider two reductive algebraic groups $G$ and $G'$ over $F$.
Let  $L/F$ and $L'/F$ be two $p'$-extensions (i.e. finite separable field extensions with Galois closure of degree prime to $p$), and pick some indecomposable summands $A$ in $M(L)^F$ and $A'$ in $M(L')^F$.
Let $Y$, $Y'$ be  projective homogeneous varieties over $L$ and $L'$ under
$G_L$ and $G'_{L'}$,  respectively, which are equivalent to the restrictions of some projective homogeneous $F$-varieties.
If $Y^F$ and $Y'^F$ both dominate $X$, and if the A-upper $F(X)$-motives $U_{A_{F(X)}}(Y_{L(X)})$ and $U_{A'_{F(X)}}(Y'_{L'(X)})$ are isomorphic, then the $F$-motives $U_A(Y)$ and $U_{A'}(Y')$ are isomorphic as well.
\end{prop}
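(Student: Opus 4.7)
The overall strategy is to reduce to Corollary~\ref{critR} (equivalently, Theorem~\ref{crit}), which states that two A-upper motives over $F$ are isomorphic if and only if the corresponding Artin motives are isomorphic and the corresponding varieties are equivalent mod $p$. The hypotheses of that criterion apply in our setting because $Y$ and $Y'$ are equivalent to restrictions of projective homogeneous $F$-varieties, which is the only property of the groups actually exploited in its proof. So the task is to descend two pieces of information from $F(X)$ to $F$: the Artin-motive isomorphism $A \simeq A'$ (the easy part) and the mod-$p$ equivalence of $Y^F$ and $Y'^F$ (the real work).

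For the easy part, I would apply the functor $\Fm$ of \S\ref{Fm.section} to the given isomorphism $U_{A_{F(X)}}(Y_{L(X)}) \simeq U_{A'_{F(X)}}(Y'_{L'(X)})$. Combining~\eqref{Charles.eq}, Example~\ref{A-upper.ex}(i) (which gives $\Fm(U(Z^F))=\F$), and the fact that $\Fm$ respects tensor products and base change (Remark~\ref{Fm-res-tens.rem}), this yields $A_{F(X)} \simeq A'_{F(X)}$; since $X$ is geometrically integral, Corollary~\ref{AFX} then descends the isomorphism to $A \simeq A'$ over $F$.

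For the equivalence of varieties, I would pick projective homogeneous $F$-varieties $\hat Y$ and $\hat Y'$ equivalent mod $p$ to $Y^F$ and $Y'^F$ respectively; these exist by the hypotheses on $Y$ and $Y'$ combined with Remark~\ref{cores.rem}. Over $F(\hat Y)$, the motive $U(\hat Y)_{F(\hat Y)} \simeq U(Y^F)_{F(\hat Y)}$ contains a Tate summand $\F$, so via~\eqref{Charles.eq} the motive $U_A(Y)_{F(\hat Y \times X)}$ contains $A_{F(\hat Y \times X)}$ as a direct summand. Base-changing the given isomorphism from $F(X)$ to $F(\hat Y \times X) = F(\hat Y)(X)$ and invoking $A \simeq A'$ from the previous paragraph, this summand transfers to $U_{A'}(Y')_{F(\hat Y \times X)}$; Remark~\ref{Charles} (whose hypotheses hold since $A'_{F(\hat Y \times X)}$ is still indecomposable by Corollary~\ref{AFX}) then forces $(Y'^F)_{F(\hat Y \times X)}$ to be isotropic mod $p$.

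The heart of the argument is then to descend this isotropy from $F(\hat Y \times X)$ back to $F(\hat Y)$, which I expect to be the main obstacle. For this, I would exploit that $\hat Y$ dominates $X$ (inherited from $Y^F$ dominating $X$): the variety $X_{F(\hat Y)}$ carries a $0$-cycle of degree $1$ mod $p$, and since $p$ is prime this produces a closed point $x$ whose residue field $\kappa(x)$ is a prime-to-$p$ extension of $F(\hat Y)$. Specialization of Chow groups along the regular local ring $\mathcal{O}_{X_{F(\hat Y)},x}$ transports the multiplicity-$1$ $0$-cycle from $(Y'^F)_{F(\hat Y \times X)}$ to $(Y'^F)_{\kappa(x)}$, and a prime-to-$p$ transfer to $F(\hat Y)$ then yields a $0$-cycle of degree $1$ mod $p$ on $(Y'^F)_{F(\hat Y)}$. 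This proves that $\hat Y$, and therefore $Y^F$, dominates $Y'^F$; the symmetric argument (swapping the roles of $Y$ and $Y'$, and of $\hat Y$ and $\hat Y'$) yields the reverse domination, so $Y^F$ and $Y'^F$ are equivalent mod $p$, and Corollary~\ref{critR} concludes.
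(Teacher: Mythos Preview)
Your approach is correct but substantially longer than the paper's three-line argument. The paper simply applies Theorem~\ref{crit} \emph{over the field $F(X)$} to the given isomorphism: this yields in one stroke both $A_{F(X)}\simeq A'_{F(X)}$ and the equivalence of $(Y^F)_{F(X)}$ and $(Y'^F)_{F(X)}$ mod $p$. The Artin part descends via Corollary~\ref{AFX}, exactly as you do. For the variety equivalence, the paper cites \cite[Proof of Proposition 9]{motequiv}, which says that if two varieties both dominate $X$ and become equivalent over $F(X)$, they are already equivalent over $F$; a second application of Theorem~\ref{crit}, now over $F$, then concludes. That cited lemma is precisely the specialization argument you spell out in your last paragraph. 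Your detour through $F(\hat Y\times X)$ and Remark~\ref{Charles} is essentially reproving it while still carrying the Artin motive $A$ along --- baggage that the paper's first use of Theorem~\ref{crit} has already stripped away.

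One imprecision in your write-up: the isomorphism you are given is between the $F(X)$-motives $U_{A_{F(X)}}(Y_{L(X)})$ and $U_{A'_{F(X)}}(Y'_{L'(X)})$, not between the restrictions $U_A(Y)_{F(X)}$ and $U_{A'}(Y')_{F(X)}$. These can differ, since $U(Y^F)_{F(X)}$ need not remain indecomposable. Your transfer of the summand $A_{F(\hat Y\times X)}$ still goes through --- $U_{A_{F(X)}}(Y_{L(X)})$ is a direct summand of $U_A(Y)_{F(X)}$, and the Artin summand you produce already lies in the smaller piece --- but the two motives should not be conflated.
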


\begin{rem} By Corollary~\ref{AFX}, $A_{F(X)}$ is an indecomposable summand of $M\bigl(L(X)\bigr)^{F(X)}$, and similarly for $A'_{F(X)}$. So the A-upper motives considered in the statement of Proposition~\ref{free} are well defined, see Notation~\ref{A-upper.not}. \end{rem}

\begin{proof}[Proof of Proposition \ref{free}]
Assume that $U_{A_{F(X)}}(Y_{L(X)})$ and $U_{A'_{F(X)}}(Y'_{L'(X)})$ are isomorphic.
By Theorem \ref{crit}, the $F(X)$-motives $A_{F(X)}$ and $A'_{F(X)}$ are isomorphic and the $F(X)$-varieties $(Y^F)_{F(X)}$ and $(Y'^F)_{F(X)}$ are equivalent.
Corollary~\ref{AFX} shows that $A\simeq A'$, and by~\cite[Proof of Proposition 9]{motequiv}, $Y^F$ and $Y'^F$ are equivalent.
The conclusion follows applying again Theorem~\ref{crit}.
\end{proof}

Given a reductive group $G$ over $F$, we
denote by $D_G$ its Dynkin diagram, that is the Dynkin diagram of the root system of $G_{\bar F}$ with respect to $T_{\bar F}$, for some maximal torus $T\subset G$.
Sometimes, depending on the context, $D_G$ stands for the set of vertices of the Dynkin diagram.
We also denote by $E$ a minimal field extension of $F$ over which $G$ becomes inner, so that the action of $\Gamma_F$ on $D_G$ (also called the $\ast$-action) factors through $\Gal(E/F)$.

Any $\ast$-invariant subset $\tau\subset D_G$ yields a projective $G$-homogeneous variety denoted by $X_{G,\tau}$, and this induces a
 bijection between the $\ast$-invariant subsets of $D_G$ and
the isomorphism classes of projective $G$-homogeneous varieties. Note that we use the same convention as in the proof of Theorem~\ref{main} for this identification; in particular, the empty set corresponds to $\Spec F$.

A vertex of $D_G$ is called distinguished if it is contained in an orbit $\tau$ such that the variety $X_{G,\tau}$ has a rational point.
The classical Tits index of $G$ consists of its Dynkin diagram $D_G$ , endowed with the $\ast$-action, together with the subset $D_G^0\subset D_G$ which consists of all distinguished vertices.
In a similar way, a vertex is called $p$-distinguished if it is contained in an orbit $\tau$ such that the variety $X_{G,\tau}$ is isotropic (mod $p$), that is admits a closed point of degree prime to $p$. We denote by $D_G^p$ the set of all $p$-distinguished vertices, see~\cite{titspindexes}.

In the proofs, we will use the fact that a projective homogeneous variety $X$ is isotropic (mod $p$) if and only if its upper motive is a Tate motive, see~\cite[Lemma 2.2]{TateTraces}. Combining with~\cite[Corollary 2.15]{upper}, we get that two equivalent varieties are isotropic over the same extensions of their base field.

Consider now two reductive groups $G$ and $G'$. We assume each of them is an inner form of the other,
or, equivalently, both are inner forms of the same quasi-split group.
In such a situation, the Dynkin diagrams $D_G$ and $D_{G'}$ are
$\Gamma_F$-equivariant isomorphic and we will fix one of the possible isomorphisms.

\begin{prop}
\label{uppertits}
Let $G$ and $G'$ be $p'$-inner $p$-consistent reductive groups over $F$, inner forms of each other. Fix an equivariant isomorphism $\varphi\colon D_G\rightarrow D_{G'}$ of their Dynkin diagrams, and a $\ast$-invariant subset $\tau_0$ of $D_G$. Let $E/F$ be a minimal field extension over which $G_E$, hence also $G'_E$, is of inner type.
The following conditions on $G$, $G'$, $\tau_0$, and $\phi$ are equivalent:
    \begin{enumerate}
        \item[(i)] for any field extension $K/F$, one has $\tau_0 \subset D^p_{G_K}$ (i.e., $\tau_0$ is $p$-distinguished over $K$) if and only if $\varphi(\tau_0) \subset D^p_{G'_K}$;
         moreover,  for any $K/F$ over which these equivalent conditions hold, we have $\varphi(D^{p}_{G_K})=D^{p}_{G'_K}$;
        \item[(ii)] for any field extension $L/F$ contained in $E$, any indecomposable summand $A$ of the motive $M(L)^F$, and any $\Gal(E/L)$-invariant subset $\tau\subset D_G$ containing $\tau_0$, the A-upper motives $U_A(X_{G_L,\tau})$ and $U_A(X_{G'_L,\varphi(\tau)})$ are isomorphic.
    \end{enumerate}
\end{prop}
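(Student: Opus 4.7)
The plan is to prove both implications using Theorem~\ref{crit} together with the étale decomposition $L\otimes_F K=\prod_i K_i$ into a product of fields, which controls when the corestriction $X_{G_L,\tau}^F$ is $p$-isotropic over a field extension $K/F$: namely, iff $\tau\subset D^p_{G_{K_i}}$ for some component $K_i$. Passing back and forth between the two sides of this equivalence uses the pushforward of $0$-cycles along $\Spec K_i\to\Spec K$ and the fact that all $[K_i:K]$ divide the prime-to-$p$ degree $[L:F]$.

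For (ii)$\Rightarrow$(i), specializing (ii) to $L=F$, $A=\F$ and $\tau=\tau_0$ yields $U(X_{G,\tau_0})\simeq U(X_{G',\varphi(\tau_0)})$, so by~\cite[Cor.~2.15]{upper} the two varieties are equivalent mod $p$ and $p$-isotropic over the same extensions, which gives the first part of (i). For the second part, given $K/F$ with $\tau_0\subset D^p_{G_K}$ and $v\in D^p_{G_K}$, I would set $L=E\cap K$ (a prime-to-$p$ extension of $F$), $\tau'=\Gamma_K\cdot v=\Gal(E/L)\cdot v$, and $\tau=\tau_0\cup\tau'$, which is $\Gal(E/L)$-stable and contains $\tau_0$. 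Since $L/F$ is prime to $p$, the Tate motive $\F$ is an indecomposable summand of $M(L)^F$ by Example~\ref{Artin}, so (ii) applies with this data and delivers $X_{G_L,\tau}^F\sim X_{G'_L,\varphi(\tau)}^F$ mod $p$. As $L\subset K$, the field $K$ appears as one of the components of $L\otimes_F K$, so $X_{G_K,\tau}$ is a component of $(X_{G_L,\tau}^F)_K$; since $\tau\subset D^p_{G_K}$ by construction, this component is $p$-isotropic, hence so is the whole corestriction. By equivalence, $(X_{G'_L,\varphi(\tau)}^F)_K$ is $p$-isotropic over $K$, so some $X_{G'_{K_i},\varphi(\tau)}$ is $p$-isotropic over a component $K_i$ of $L\otimes_F K$. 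Pushing the $0$-cycle along $\Spec K_i\to\Spec K$ to $X_{G'_K,\varphi(\tau)}$ then forces $\varphi(\tau)\subset D^p_{G'_K}$, and in particular $\varphi(v)\in D^p_{G'_K}$. Exchanging the roles of $G$ and $G'$ produces the equality $\varphi(D^p_{G_K})=D^p_{G'_K}$.

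For (i)$\Rightarrow$(ii), Theorem~\ref{crit} reduces the statement to proving $X_{G_L,\tau}^F\sim X_{G'_L,\varphi(\tau)}^F$ mod $p$, i.e.\ $p$-isotropy over the same field extensions. By the criterion in the first paragraph, this amounts to showing that for every $K/F$ there exists a component $K_i$ of $L\otimes_F K$ with $\tau\subset D^p_{G_{K_i}}$ iff there exists a component with $\varphi(\tau)\subset D^p_{G'_{K_i}}$. Since $\tau\supset\tau_0$, any such $K_i$ satisfies $\tau_0\subset D^p_{G_{K_i}}$, and condition (i) applied to the field extension $K_i/F$ then gives $\varphi(D^p_{G_{K_i}})=D^p_{G'_{K_i}}$, hence $\varphi(\tau)\subset D^p_{G'_{K_i}}$; the converse direction is symmetric.

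The main obstacle is the careful analysis of the étale decomposition of $L\otimes_F K$: one must identify the tautological factor $K$ when $L\subset K$, and, more delicately, transfer $p$-isotropy between $X_{G_K,\varphi(\tau)}$ and the other components $X_{G_{K_i},\varphi(\tau)}$ via the pushforward of $0$-cycles, using that each $[K_i:K]$ divides $[L:F]$ and is therefore prime to $p$.
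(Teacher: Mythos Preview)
Your proposal is correct, and it differs from the paper's argument in both directions, though the overall architecture (reduce via Theorem~\ref{crit} to equivalence of corestrictions, then compare $p$-isotropy) is the same.

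For (i)$\Rightarrow$(ii), the paper works directly over $L$: over the function field $\tilde L=L(X_{G_L,\tau})$ the set $\tau$ is $p$-distinguished for $G$, so by (i) also $\varphi(\tau)$ is $p$-distinguished for $G'$ over $\tilde L$, giving the domination $X_{G_L,\tau}\to X_{G'_L,\varphi(\tau)}$ of $L$-varieties; one symmetrizes and then corestricts via Remark~\ref{cores.rem}. Your route instead reformulates equivalence of the corestrictions as ``$p$-isotropic over the same $K/F$'' and checks this via the étale decomposition $L\otimes_FK=\prod_iK_i$. Both are valid; the paper's is shorter because it bypasses the ``same isotropy locus'' criterion and the component bookkeeping.

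For the second half of (ii)$\Rightarrow$(i), the paper takes the whole set $\Theta=D^p_{G_K}$ at once, chooses $L$ minimal in $E$ making $\Theta$ stable, and invokes \cite[Theorem~3.16(i)]{kerstenrehmann} to produce a prime-to-$p$ extension $M/K$ containing $L$ over which $X_{G,\Theta}$ acquires a rational point. You instead set $L=E\cap K$ and argue vertex by vertex through the étale decomposition, using only that each $[K_i:K]$ divides $[L:F]$ and is therefore prime to $p$. Your approach is more self-contained (no appeal to Kersten--Rehmann), at the price of the component analysis; note that the identification $L=E\cap K$ is unambiguous because $E/F$ is Galois, and the image of $\Gamma_K$ in $\Gal(E/F)$ is exactly $\Gal(E/L)$, so $\tau'=\Gamma_K\cdot v$ is indeed $\Gal(E/L)$-stable. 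The ``obstacle'' you flag is not a genuine difficulty: the pushforward argument goes through as you describe.
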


\begin{proof}
   Assuming (i), fix a field extension $L/F$ contained in $E$, an Artin motive $A$, and a subset $\tau\supset\tau_0$ as in (ii).
    The subset $\tau_0$ is $p$-distinguished for $G$ over the function field $\tilde{L}$ of the variety $X_{G_L,\tau}$.
    Therefore, by condition (i), the subset $\varphi(\tau)\subset D_{G'}$ is $p$-distinguished over $\tilde{L}$.
    The $L$-variety $X_{G_L,\tau}$ thus dominates $X_{G'_L,\varphi(\tau)}$.
    The same reasoning with $\varphi(\tau)$ and the inverse of $\varphi$ implies that the $L$-varieties $X_{G_L,\tau}$ and $X_{G'_L,\varphi(\tau)}$ are equivalent.
Hence, the $F$-varieties $X^F_{G_L,\tau}$ and $X^F_{G'_L,\varphi(\tau)}$ are equivalent (see Remark~\ref{cores.rem}), and the A-upper motives
    $U_A(X_{G_L,\tau})$ and $U_A(X_{G'_L,\varphi(\tau)})$ are isomorphic by Theorem \ref{crit}.\\

   Let us now prove the converse. Condition (ii) applied to $L=F$ and $\tau=\tau_0$ shows that the upper motives $U(X_{G,\tau_0})$ and $U(X_{G',\varphi(\tau_0)})$ are isomorphic.
    Therefore, given a field extension $K/F$, the variety $X_{G_K,\tau_0}$ is isotropic if and only if $X_{G'_K,\varphi(\tau_0)}$ is isotropic as well. This means that over the field $K$, $\tau_0$ is $p$-distinguished for $G$ if and only if $\phi(\tau_0)$ is for $G'$.

    To prove the second part of (i), let us fix a field extension $K/F$ such that $\tau_0$ is $p$-distinguished over $K$, and denote $\Theta=D_{G_K}^p\subset D_G$.
    By definition of $\Theta$, the $K$-variety $X_{G_K,\Theta}$ is isotropic, so there exists a prime-to-$p$ field extension $M/K$ such that $X_{G_M,\Theta}$ has a rational point. Let $L/F$ be a minimal subfield of $E$ such that $\Theta$ is $\Gal(E/L)$ invariant.
    By~\cite[Theorem 3.16(i)]{kerstenrehmann}, replacing $E$ by an isomorphic field extension of $F$ if necessary, we may assume $L\subset M$.
Condition (ii) applied to $L$ and $\tau=\Theta$ now provides an isomorphism of the A-upper motives $U_A(X_{G_L,\Theta})$ and $U_A(X_{G'_L,\varphi(\Theta)})$.
Hence, the $L$-varieties $X_{G_L,\Theta}$ and $X_{G'_L,\varphi(\Theta)}$ are equivalent (mod $p$).
As $L$ is contained in $M$, it follows that $X_{G_M,\Theta}$ and $X_{G'_M,\varphi(\Theta)}$ also are equivalent.
The first one has a rational point, so the second is isotropic; since $[M:K]$ is prime to $p$, it follows that $X_{G'_K,\varphi(\Theta)}$ also is isotropic.

Thus, the subset $\varphi(\Theta)=\varphi(D^{p}_{G_K})$ is $p$-distinguished for $G'$ over $K$.
    The same reasoning with $G$ replaced by $G'$, $\tau_0$ by $\varphi(\tau_0)$, and $\varphi$ by its inverse, gives that $\varphi^{-1}(D^p_{G'_K})\subset D^p_{G_K}$.
    Hence $\varphi(D^{p}_{G_K})=D^{p}_{G'_K}$.
\end{proof}

Consider an arbitrary subset $\tau$ of $D_G$.
There exists a minimal field extension $L_{\tau}/F$ contained in $E/F$ such that $\tau$ is $\Gal(E/L_\tau)$-invariant.
The $F$-motive $M_{G,\tau}:=M(X_{G_{L_{\tau}},\tau})^F$ is called the \emph{standard motive of $G$ of type $\tau$}.

If $\tau$ is $\ast$-invariant, it is simply the motive of the projective $G$-homogeneous variety $X_{G,\tau}$.

\begin{thm}\label{equiv}
Let $G$ and $G'$ be $p'$-inner $p$-consistent reductive groups over a field $F$ which are inner forms of each other. Let $\tau_0$ be an invariant subset in $D_G$.
The equivalent conditions of Proposition \ref{uppertits} are satisfied if and only if for any subset $\tau\subset D_G$ containing $\tau_0$, the motives
$M_{G,\tau}$ and $M_{G',\varphi(\tau)}$
are isomorphic.
\end{thm}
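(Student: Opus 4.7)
The plan is to prove the two implications separately, exchanging freely between the equivalent conditions~(i) and~(ii) of Proposition~\ref{uppertits}: condition~(i) is best suited to the direction from motivic isomorphism to $p$-index compatibility, while~(ii) drives the converse. The principal tools are Theorem~\ref{main} (which writes a standard motive as a sum of Tate shifts of A-upper motives), Theorem~\ref{Aupper.thm} together with~\eqref{Charles.eq} (yielding the structure of the $U_A$'s), Theorem~\ref{isomcrit} (isomorphism via higher Artin-Tate traces), and \cite[Corollary~2.15]{upper} (passage from upper motives back to varieties).

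For the direction from motivic isomorphism to~(i), first apply the hypothesis to $\tau=\tau_0$. Since $\tau_0$ is $\ast$-invariant, $L_{\tau_0}=F$ and the hypothesis reads $M(X_{G,\tau_0})\simeq M(X_{G',\varphi(\tau_0)})$; by Krull-Schmidt their upper motives are isomorphic, so by~\cite[Corollary~2.15]{upper} the two varieties are equivalent mod~$p$, a property preserved by any base change $K/F$, which yields the first clause of~(i). For the second clause, fix $K/F$ with $\tau_0\subset D^p_{G_K}$ and set $\Theta:=D^p_{G_K}\supset\tau_0$. The hypothesis applied to $\Theta$, combined with the same Krull-Schmidt argument applied to the geometrically connected varieties $X_{G_{L_\Theta},\Theta}$ and $X_{G'_{L_\Theta},\varphi(\Theta)}$, provides an equivalence of their corestrictions to $F$. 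Extending scalars to a prime-to-$p$ field extension $K'$ of $K$ containing $L_\Theta$, available by~\cite[Theorem~3.16(i)]{kerstenrehmann} as in the proof of Proposition~\ref{uppertits}, translates this into an equivalence of $X_{G_{K'},\Theta}$ and $X_{G'_{K'},\varphi(\Theta)}$ over $K'$; since the first is isotropic mod~$p$ over $K'$ (because $\Theta\subset D^p_{G_K}$ and $K'/K$ is prime-to-$p$), so is the second, whence $\varphi(\Theta)\subset D^p_{G'_K}$, and the opposite inclusion follows by symmetry.

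For the converse, assume~(ii), fix $\tau\supset\tau_0$, and decompose $M_{G,\tau}$ through Theorem~\ref{main}, applied to $X_{G_{L_\tau},\tau}$ over $L_\tau$ and then corestricted to $F$, as a sum of Tate shifts of A-upper motives $U_{A_j}(X_{G_{L_j},\sigma_j})\{n_j\}$. The remark following Theorem~\ref{main} guarantees that each $\sigma_j$ contains $\tau$, hence $\tau_0$, so by~(ii) each summand is isomorphic to $U_{A_j}(X_{G'_{L_j},\varphi(\sigma_j)})\{n_j\}$. To deduce $M_{G,\tau}\simeq M_{G',\varphi(\tau)}$, invoke Theorem~\ref{isomcrit}: it suffices to check that the two motives share the same higher Artin-Tate trace. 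Over any $K/F$, formula~\eqref{Charles.eq} shows that a summand $U_{A_j}(X_{G_{L_j},\sigma_j})\{n_j\}$ restricts to an Artin-Tate motive over $K$ precisely when $X_{G_{L_j},\sigma_j}^F$ is isotropic mod~$p$ over $K$, a purely combinatorial condition asking that $\sigma_j$ lie in $D^p_{G_{K'}}$ for some prime-to-$p$ extension $K'/K$ containing $L_j$. Condition~(i), equivalent to~(ii), ensures this condition is matched on the $G'$-side, while~(ii) identifies the resulting Artin contribution $(A_j)_K\{n_j\}$ with its $G'$-counterpart; both traces therefore coincide.

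The main obstacle is the forward direction: the decomposition provided by Theorem~\ref{main} is not canonical, so matching summands together with their multiplicities across the two sides is not feasible directly. Theorem~\ref{isomcrit} circumvents this by turning the comparison of motives into a comparison of higher Artin-Tate traces, which in turn reduces, via the structure formula for $U_A$, to the combinatorial identity of higher $p$-indexes supplied by~(i).
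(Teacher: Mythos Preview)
Your argument for the converse implication (from condition~(ii) to the isomorphism of standard motives) has a genuine gap, and Theorem~\ref{isomcrit} does not circumvent it in the way you suggest. After decomposing $M_{G,\tau}=\bigoplus_j U_{A_j}(X_{G_{L_j},\sigma_j})\{n_j\}$ and using~(ii) to identify each summand with $U_{A_j}(X_{G'_{L_j},\varphi(\sigma_j)})\{n_j\}$, you obtain an isomorphism between $M_{G,\tau}$ and $\bigoplus_j U_{A_j}(X_{G'_{L_j},\varphi(\sigma_j)})\{n_j\}$. But nothing you have written shows that this last sum is $M_{G',\varphi(\tau)}$: you have no independent description of the complete decomposition of $M_{G',\varphi(\tau)}$, so you cannot compute its higher Artin-Tate trace and compare. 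Condition~(i) only controls $p$-indexes, i.e.\ which varieties become isotropic; it says nothing about the \emph{multiplicities} with which A-upper motives occur in $M_{G',\varphi(\tau)}$. Your appeal to Theorem~\ref{isomcrit} is therefore circular: both sides of the comparison of Artin-Tate traces are ultimately expressed through the decomposition of $M_{G,\tau}$, never through $M_{G',\varphi(\tau)}$ itself. (There is also a smaller issue: the remark after Theorem~\ref{main} asserts that $Y^F$ dominates $X$, not that $\sigma_j\supset\tau$; and the claim that $U_{A_j}(Y)_K$ ``restricts to an Artin-Tate motive precisely when $Y^F$ is isotropic over $K$'' is false, since $U(Y^F)_K$ typically has non-Tate summands even when it has a Tate one.)

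The paper's proof handles precisely this multiplicity problem, and does so by a completely different mechanism. It proceeds by induction on the semisimple rank. The key input, absent from your proposal, is the decomposition of \cite[Theorem~4.2]{MR2178658}: when $X_{G,\tau}$ and $X_{G',\varphi(\tau)}$ both have a rational point, this decomposition expresses each standard motive as a sum of shifted standard motives for the semisimple parts $\tilde G$, $\tilde G'$ of the corresponding parabolics, with the index set, fields $L_i$, types $\tau_i$, and shifts $n_i$ determined by the \emph{same combinatorial data} on both sides. This is what forces the multiplicities to match, and the induction hypothesis then finishes. The general (possibly anisotropic) case is reduced to this one by passing to the function field of $X_{G,\tau}\times X_{G',\varphi(\tau)}$ and running a minimal-shift counting argument using Proposition~\ref{free} to control how A-upper summands behave under this base change. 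That counting step, together with the combinatorially-controlled isotropic decomposition, is the missing idea in your proposal.
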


\begin{proof}
The ``if'' part is clear: if the motives $M_{G,\tau}$ and $M_{G',\varphi(\tau)}$ are isomorphic, then
for any field $L$ with $L_\tau\subset L\subset E$,
the varieties $X_{G_{L},\tau}^F$ and $X_{G'_{L},\varphi(\tau)}^F$ are equivalent.
Hence, by Theorem \ref{crit}, $G$ and $G'$ satisfy condition $(ii)$ of Proposition \ref{uppertits}.

We prove the opposite implication by induction on the (common) semisimple rank of $G$ and $G'$.
More concretely, assuming the conditions of Proposition \ref{uppertits}, we will prove
that for every $\tau\supset\tau_0$ the motives $M_{G,\tau}$ and $M_{G',\varphi(\tau)}$
are isomorphic.
For $\tau=\emptyset$ the isomorphism trivially holds.
This covers the rank zero case, which is the base of the induction.
Below we assume that $\tau\ne\emptyset$.

We first show
that $M_{G,\tau}$ and $M_{G',\varphi(\tau)}$ are isomorphic if $\tau$ and $\varphi(\tau)$ are $\Gal(E/F)$-invariant and the associated varieties both have a rational point (hence the reductive algebraic groups $G$ and $G'$ are isotropic).

Let $\tilde{G}$ be the semisimple part of a parabolic subgroup in $G$ of type $\tau$. The Dynkin diagram $D_{\tilde{G}}$ of $\tilde{G}$ is obtained by removing the subset $\tau$ from $D_G$, and $\tilde{G}_{E}$ is of inner type.
By \cite[Theorem 4.2]{MR2178658}, there is a motivic decomposition
$$
M_{G,\tau}\simeq \bigoplus_{i\in \mathcal{I}} M_{\tilde{G}_{\!L_i},\tau_i}^F\{n_i\}
$$
with some field extensions $L_i/F$ contained in $E$ and some $\Gal(E/L_i)$-invariant $\tau_i\subset D_{\tilde{G}}$.
Note that the fields $L_i$, the projective $\tilde{G}_{L_i}$-homogeneous varieties $X_{\tilde{G}_{\!L_i},\tau_i}$, and the shifting numbers $n_i$ in this decomposition are completely determined by the underlying combinatorics of $G$. In particular, the isomorphism $\varphi:D_G\rightarrow D_{G'}$ from Proposition \ref{uppertits} yields an analogous decomposition of $M_{G',\varphi(\tau)}$ with respect to the semisimple part $\tilde{G'}$ of a parabolic subgroup in $G'$ of type $\phi(\tau)$,
with the same $\mathcal{I}$, $L_i$, $\tau_i$, and $n_i$:
$$
M_{G',\varphi(\tau)}\simeq \bigoplus_{i\in \mathcal{I}} M_{\tilde{G'}_{\!\!L_i},\varphi(\tau_i)}^F\{n_i\}
$$
Since $G$ and $G'$ are inner forms of each other and satisfy condition (i) of Proposition \ref{uppertits} for $\tau_0\subset D_G$, the groups
$\tilde{G}$ and $\tilde{G'}$ also do, for $\tilde\tau_0=\emptyset$.
Indeed, since $X_{G,\tau}$ and $X_{G',\varphi(\tau)}$ are isotropic, for any field extension $K/F$, we have disjoint union decompositions
$$D^p_{G_K}=D^p_{\tilde{G}_K}\sqcup \tau\;\;\text{  and  }\;\;D^p_{G'_K}=D^p_{\tilde{G'}_{\! K}}\sqcup \phi(\tau).$$
Condition (i) of Proposition \ref{uppertits} for $G$ and $G'$ gives that $D^p_{G'_K}=\varphi(D^p_{G_K})$ and hence  $D^p_{\tilde{G'}_{\!K}}=\varphi(D^p_{\tilde{G}_{\! K}})$.
It follows that for any any $i\in \mathcal{I}$ and any field extension $L_i/F$, the reductive groups $\tilde{G}_{\!L_i}$ and $\tilde{G'}_{\!\!L_i}$ also satisfy condition (i) of Proposition \ref{uppertits} with respect to the restriction of $\varphi$ and the subset $\tilde{\tau}_0=\emptyset$. By induction, the motives $M_{\tilde{G}_{\!L_i},\tau_i}$ and $M_{\tilde{G'}_{\!\!L_i},\varphi(\tau_i)}$ are thus isomorphic.
Therefore, the motives $M_{\tilde{G}_{\!L_i},\tau_i}^F$ and $M_{\tilde{G'}_{\!\!L_i},\varphi(\tau_i)}^F$ are isomorphic as well
and so $M_{G,\tau}\simeq M_{G',\varphi(\tau)}$.

The second case we consider is that of arbitrary $\Gal(E/F)$-invariant subsets $\tau$ and $\varphi(\tau)$.
We will reduce to the previous case using scalar extension to the function field of a variety as in~Proposition~\ref{free}.
Let us first introduce a set of integers describing motivic decompositions.
For any projective $G$-homogeneous variety $X$ and any direct summand $M$ in $M(X)$, we write $\ell_{A,Y,n}(M)$ for the number of indecomposable summands isomorphic to the A-upper summand $U_A(Y)\{n\}$ in a complete decomposition of $M$, where $U_A(Y)$ is a A-upper motive of $G$, see Definition~\ref{AupperG} and Theorem~\ref{main}.
Since $G$ and $G'$ satisfy the conditions of Proposition~\ref{uppertits}, the A-upper motives of $G$ and $G'$ are pairwise isomorphic, and we may also consider the number of indecomposable summands isomorphic to $U_A(Y)$ in a direct summand of the motive of a $G'$-projective homogeneous variety, for which we use a similar notation.
If the motives of $X_{G,\tau}$ and $X_{G',\varphi(\tau)}$ are not isomorphic, then $\ell_{A,Y,n}(M_{G,\tau})\neq \ell_{A,Y,n}(M_{G',\varphi(\tau)})$ for some A-upper motive $U_A(Y)$ of $G$. Consider the minimal integer $n$ for which such a non-equality occurs.

Over the function field $K/F$ of the product $\Pi:=X_{G,\tau}\times X_{G',\varphi(\tau)}$ both $X_{G,\tau}$ and $X_{G',\varphi(\tau)}$ have a rational point. So $M_{G_K,\tau}$ and $M_{G'_K,\varphi(\tau)}$ are isomorphic.
Moreover, the motive $A_K$ is indecomposable (see Corollary \ref{AFX})
and so we can investigate the integer $\ell_{A_K,Y_K,n}(M_{G_K,\tau})$.
To lighten notation (by abusing it), below we will write $Y_K$ for the variety $Y_{L(\Pi)}$.
If $U_{A_K}(Y_K)\{n\}$ is a direct summand of $M_{G_K,\tau}$,
then by the Krull-Schmidt property and Theorem \ref{main},
it is a direct summand in the $K/F$-restriction $(U_B(Z)\{k\})_K$ of an A-upper motive $U_B(Z)$ of $G$, shifted by some integer $k$.
By~\eqref{Charles.eq}, we have $(U_B(Z))_K\simeq U_{B_K}(Z_K)\oplus N,$ where $N$ is a direct sum of A-upper motives with Tate shifts at least $1$; therefore, $k\leq n$.
Since $X_{G,\tau}$ and $X_{G',\varphi(\tau)}$ are equivalent, any projective homogeneous variety which dominates $X_{G,\tau}$ (or $X_{G',\varphi(\tau)}$) dominates their product. In particular, Proposition \ref{free} implies that a direct summand $U_{A_K}(Y_K)\{n\}$ of $M(X_{G_K,\tau})$ may only arise from a $K/F$-restriction $(U_B(Z)\{n\})_K$ with the same shift $n$ if $B$ and $A$ are isomorphic Artin motives and $Z$ and $Y$ are equivalent, that is from the A-upper motive $U_A(Y)\{n\}$ (see Theorem \ref{crit}).

Write $M$ and $M'$ for the direct summands of $M_{G,\tau}$ and $M_{G',\varphi(\tau)}$, respectively, given by the sum of all the indecomposable summands isomorphic to A-upper motives of $G$ which shifts strictly lower than $n$ (in a fixed complete decomposition).
Thanks to the previous discussion, separating the summands $U_{A_K}(Y_K)\{n\}$ of $M_{G_K,\tau}$ which arise from $M_K$,
we get the equalities
$$
\ell_{A_K,Y_K,n}(M_{G_K,\tau})=\ell_{A,Y,n}(M_{G,\tau})+\ell_{A_K,Y_K,n}(M_K),
$$
$$
\mbox{ and }\ell_{A_K,Y_K,n}(M_{G'_K,\varphi(\tau)})=\ell_{A,Y,n}(M_{G',\varphi(\tau)})+\ell_{A_K,Y_K,n}(M'_K).
$$
By minimality of $n$, the motives $M$ and $M'$ are isomorphic, hence $M_K$ and $M'_K$ are isomorphic as well and $\ell_{A_K,Y_K,n}(M_K)=\ell_{A_K,Y_K,n}(M'_K)$.
As by assumption $\ell_{A,Y,n}(M_{G,\tau})\neq \ell_{A,Y,n}(M_{G',\varphi(\tau)})$, it follows that $\ell_{A_K,Y_K,n}(M_{G_K,\tau})$ and $\ell_{A_K,Y_K,n}(M_{G'_K,\varphi(\tau)})$ are not equal, a contradiction to the fact that the motives of $X_{G_K,\tau}$ and of $X_{G'_K,\varphi(\tau)}$ are isomorphic (recall that both of these varieties have a rational point).

Finally, consider an arbitrary subset $\tau$ of $D_G$.
The reductive groups $G_{L_\tau}$ and $G'_{L_{\tau}}$ satisfy condition (i) of Proposition \ref{uppertits}.
It follows from the Galois-invariant case that
the motives
$M_{G_{F_{\tau}},\tau}$ and $M_{G'_{F_{\tau}},\varphi(\tau)}$ are isomorphic, hence so are
the motives
$M_{G,\tau}=M_{G_{F_{\tau}},\tau}^F$ and $M_{G',\varphi(\tau)}=M_{G'_{F_{\tau}},\varphi(\tau)}^F$, and this finishes the proof.
\end{proof}

A field is called \emph{$p$-special} if every finite extension of this field has a $p$-power degree.
Let $G$ and $G'$ be two reductive groups, inner forms of each other.
Similarly to \cite[Definition 1]{motequiv},
we say  that
$G$ and $G'$ are \emph{motivic equivalent} (with coefficients in $\mathbb{F}$) with respect to a Galois-equivariant isomorphism $\varphi:D_G\rightarrow D_{G'}$,  if for any subset $\tau$ of $D_G$, the motives $M_{\tau,G}$ and $M_{\varphi(\tau),G'}$ are isomorphic.

\begin{cor}
\label{moteq.cor}
Let $G$ and $G'$ be $p'$-inner $p$-consistent reductive algebraic groups over $F$, inner forms of each other.
Let $\phi$
be a $\ast$-equivariant isomorphism of their Dynkin diagrams.
The groups $G$ and $G'$ are motivic equivalent with respect to $\varphi$ if and only if for any $p$-special field extension $K/F$, $\varphi$ identifies the Tits indexes of $G_K$ and $G'_K$.
\end{cor}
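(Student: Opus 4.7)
The plan is to deduce the corollary from Theorem~\ref{equiv} applied with $\tau_0=\emptyset$, combined with a standard comparison between the Tits $p$-index over an arbitrary extension and the Tits index over its $p$-closure.

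First, motivic equivalence of $G$ and $G'$ with respect to $\varphi$ is exactly the right-hand side of Theorem~\ref{equiv} when $\tau_0=\emptyset$, since every subset of $D_G$ contains the empty set. By that theorem, motivic equivalence is therefore equivalent to condition~(i) of Proposition~\ref{uppertits} with $\tau_0=\emptyset$. The first clause of that condition is vacuous, as $\emptyset$ is trivially $p$-distinguished over every extension, and what remains is the statement that $\varphi(D^p_{G_K}) = D^p_{G'_K}$ for every field extension $K/F$.

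Next, I would translate this into a statement over $p$-special extensions. If $K$ is itself $p$-special, then every closed point of a $K$-variety has $p$-power degree, so $D^p_{G_K} = D^0_{G_K}$ and likewise for $G'$; the condition then coincides with the Tits-index identification $\varphi(D^0_{G_K}) = D^0_{G'_K}$. For an arbitrary $K/F$, I would pass to a $p$-closure $K^{(p)}$ of $K$, namely the fixed field of a Sylow pro-$p$-subgroup of $\Gamma_K$. Then $K^{(p)}$ is a $p$-special field, hence a $p$-special extension of $F$, and the standard prime-to-$p$ invariance of the Tits $p$-index yields
\[
D^p_{G_K} = D^p_{G_{K^{(p)}}} = D^0_{G_{K^{(p)}}},
\]
with the analogous equalities for $G'$. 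Hence the condition over $K$ reduces to the Tits-index identification over the $p$-special extension $K^{(p)}/F$.

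Combining these two observations, the condition $\varphi(D^p_{G_K}) = D^p_{G'_K}$ for all $K/F$ is equivalent to $\varphi$ identifying the Tits indexes of $G_K$ and $G'_K$ over every $p$-special extension $K/F$, which yields the corollary. The only ingredient beyond Theorem~\ref{equiv} that is required is the prime-to-$p$ invariance of the Tits $p$-index, which amounts to the standard fact that isotropy modulo $p$ of a projective homogeneous variety is preserved under prime-to-$p$ field extensions. I do not expect any substantial obstacle, since the substantive content of the corollary is carried by Theorem~\ref{equiv} and the remaining work is a formal dictionary between $p$-distinguished vertices over arbitrary extensions and distinguished vertices over $p$-special extensions.
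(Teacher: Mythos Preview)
Your proposal is correct and follows essentially the same route as the paper: apply Theorem~\ref{equiv} with $\tau_0=\emptyset$ to reduce motivic equivalence to the equality $\varphi(D^p_{G_K})=D^p_{G'_K}$ over all extensions $K/F$, then identify $D^p_{G_K}$ with the ordinary Tits index over a $p$-special closure of $K$. The paper condenses the last step into the single observation that a projective homogeneous variety is isotropic (mod $p$) if and only if it has a rational point over a $p$-special closure of its base field, citing \cite[Proof of Lemma 4.11]{TateTraces}; your explicit passage through $K^{(p)}$ and the equalities $D^p_{G_K}=D^p_{G_{K^{(p)}}}=D^0_{G_{K^{(p)}}}$ is just an unpacking of that same fact.
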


\begin{proof}
Theorem \ref{equiv} with $\tau_0=\emptyset$ states that $G$ and $G'$ are motivic equivalent with respect to $\varphi$ if and only if for any field extension $K/F$, $\varphi$ identifies the subsets of $p$-distinguished vertices of $G_K$ and $G'_K$.
 Over a $p$-special field $K$,
 this expresses as $\varphi(D^0_{G_K})=D^0_{G'_K}$ (through classical Tits indexes), proving one implication.
 The converse also holds since a variety is isotropic if and only if it has a rational point over a $p$-special closure of its base field \cite[Proof of Lemma 4.11]{TateTraces}.
\end{proof}

\bibliographystyle{acm}
\bibliography{atum}

\end{document}